\newtheorem*{rep@theorem}{\rep@title}
\newcommand{\newreptheorem}[2]{%
\newenvironment{rep#1}[1]{%
 \def\rep@title{#2 \ref{##1}}%
 \begin{rep@theorem}}%
 {\end{rep@theorem}}}
\newtheorem{intro_thm}{Theorem}
\newtheorem{intro_prop}[intro_thm]{Proposition}
\newtheorem{lemma}{Lemma}[section]
\newtheorem{prop}[lemma]{Proposition}
\newtheorem{cor}[lemma]{Corollary}
\theoremstyle{definition}
\newtheorem{defn}[lemma]{Definition}
\newtheorem{es}[lemma]{Example}
\theoremstyle{remark}
\newtheorem{oss}[lemma]{Remark}
\newtheoremstyle{TheoremNum}
        {0.2 cm}{0.2 cm}              
        {\itshape}                      
        {}                              
        {}                     
        {.}                             
        { }                             
        {\thmname{\bfseries #1}\thmnote{ \bfseries #3}}
    \theoremstyle{TheoremNum}
\newtheorem{rec_thm}{Theorem}
\newtheorem{rec_prop}[rec_thm]{Proposition}
\newcommand\matR{{\mathbb{R}}}
\newcommand\matN{{\mathbb{N}}}
\newcommand\matZ{{\mathbb{Z}}}
\newcommand{\id}{\mathrm{id}}
\newcommand\calL{{\mathcal L}}
\newcommand\calB{{\mathcal B}}
\newcommand\calG{{\mathcal G}}
\newcommand\calH{{\mathcal H}}
\newcommand\calR{{\mathcal R}}
\newcommand{\Hm}{\textup{H}}
\newcommand{\Hb}{\textup{H}_{\textup{b}}}
\newcommand{\Hmb}{\textup{H}_{\textup{mb}}}
\newcommand{\Hcb}{\textup{H}_{\textup{cb}}}
\newcommand{\Linf}{\text{L}^{\infty}}
\newcommand{\Lone}{\text{L}^{1}}
\newcommand{\Ima}{\text{Im}}
\newcommand{\Isom}{\text{Isom}}
\newcommand{\frakm}{\mathfrak{m}}
\newcommand{\Linfw}{\textup{L}_{\textup{w}^*}^{\infty}}
\begin{document}

\title[Measurable bounded cohomology of measured groupoids]{Measurable bounded cohomology of measured groupoids}

\author[F. Sarti]{F. Sarti}
\address{Department of Mathematics, University of Pisa, 
Largo Bruno Pontecorvo, 5, Pisa 56127, Italy}
\email{filippo.sarti@dm.unipi.it}

\author[A. Savini]{A. Savini}
\address{Department of Mathematics, University of Milano-Bicocca, Via Roberto Cozzi, 55, Milano 20126, Italy}
\email{alessio.savini@unimib.it}

\date{\today.\ \copyright{\ F. Sarti, A. Savini}.}

\begin{abstract}
We introduce the notion of measurable bounded cohomology for measured groupoids, extending continuous bounded cohomology of locally compact groups.
We show that the measurable bounded cohomology of the semidirect groupoid associated to a measure class preserving action of a locally compact group $G$ on a standard Borel space is isomorphic to the continuous bounded cohomology of $G$ with twisted coefficients. 
We also prove the invariance of measurable bounded cohomology under similarity. As an application, we compare the bounded cohomology of (weakly) orbit equivalent actions and of measure equivalent groups. In this way we recover an isomorphism in bounded cohomology similar to one proved by Monod and Shalom. 

Other relevant consequences are related to the cohomological vanishing for actions of the Thompson group $F$, of higher rank lattices and of lattices in products of locally compact groups. 

We obtain a variant of the Eckmann-Shapiro isomorphism for transitive actions. In the case of a higher rank simple Lie group, we show that the cohomology of the action is actually determined by the usual cohomology of a suitable lattice. 

For amenable groupoids, we prove that the measurable bounded cohomology is trivial. This generalizes previous results by Monod, Anantharaman-Delaroche and Renault, and Blank.
\end{abstract}
  
\maketitle

\section{Introduction} 

\subsection{Motivational background}

Given a finitely generated group $\Gamma$ acting in a measure (class) preserving way on a standard Borel probability space $(X,\mu)$, the goal of \emph{measured group theory} is to understand the interplay between the algebraic properties of $\Gamma$ and the dynamics of the $\Gamma$-action. 
Interesting information are encoded, for instance, in the macroscopic structure of the orbits.
Given two measure preserving essentially free actions $\Gamma \curvearrowright X$ and $\Lambda \curvearrowright Y$ of finitely generated groups $\Gamma$ and $\Lambda$ on standard Borel probability spaces $X$ and $Y$, respectively, we say that the actions are \emph{orbit equivalent} if there exists a measure preserving Borel map $T:X\rightarrow Y$ which restricts to a measurable isomorphism between two full measure subsets and it sends $\Gamma$-orbits to $\Lambda$-orbits (see Example \ref{example_orbit_equivalence}  for a more precise definition). If the action $\Gamma \curvearrowright X$ is fixed, the class of all actions that are orbit equivalent to the chosen one may substantially vary according to the algebraic properties of $\Gamma$. For instance, Ornstein and Weiss \cite{OW80} proved that any ergodic probability measure preserving action of an infinite amenable group is actually orbit equivalent to an ergodic action of $\mathbb{Z}$. On the other hand, Zimmer \cite{zimmer:annals} proved an orbit equivalence rigidity result for actions of higher rank lattices. More precisely, he showed that an orbit equivalence between ergodic essentially free actions of two lattices $\Gamma, \Lambda$ contained in center-free simple Lie groups $G,H$, respectively, where at least one of them has higher rank, implies the existence of an isomorphism between the ambient groups. 

The notion of orbit equivalence can be weakened by requiring that the defining condition holds only when we restrict to two positive measure Borel subsets. In this case we say that two actions are \emph{weakly (or stably) orbit equivalent} (see again Example \ref{example_orbit_equivalence}). As shown in Furman' survey \cite{furman}, two essentially free ergodic probability measure preserving actions of finitely generated groups $\Gamma$ and $\Lambda$ are weakly orbit equivalent if and only if there exists a standard Borel $(\Gamma \times \Lambda)$-space $(\Omega,m)$ admitting two finite measure Borel fundamental domains, one for the $\Gamma$-action and the other one for the $\Lambda$-action. The latter condition defines an equivalence relation between finitely generated groups called \emph{measure equivalence}. It might be natural to ask if there is a way to classify all the groups appearing in a fixed measure equivalence class. Furman \cite{Furman99} proved that any finitely generated group $\Lambda$ which is measure equivalent to a higher rank lattice $\Gamma<G$, must be actually commensurable to a lattice of $G$, up to a finite kernel. A similar result was later extended by Bader, Furman and Sauer \cite{sauer:articolo} to the case of real hyperbolic lattices by introducing a technical integrability condition. Talking about measure equivalence rigidity theorems, they are worth mentioning the result proved by Monod and Shalom \cite{MonShal} for products of hyperbolic-like lattices, the paper by Kida \cite{kida1} about mapping class groups and the work by Alvarez and Gaboriau \cite{alvarez:gaboriau} about free products. 

An alternative point of view in the study of the dynamics of probability measure preserving actions comes from the theory of \emph{Borel equivalence relations}.
Given a measure preserving action of a finitely generated group $\Gamma$ on a probability space $(X,\mu)$, we denote by $\calR_\Gamma \subset X \times X$ the \emph{orbital equivalence relation} whose classes are exactly the $\Gamma$-orbits. An orbit equivalence between two actions $\Gamma \curvearrowright X$ and $\Lambda \curvearrowright Y$ implies the existence of a measured isomorphism between the associated orbital equivalence relations $\calR_\Gamma$ and $\calR_\Lambda$. This simple idea suggests that one may translate the dynamical properties of the action in terms of the associated orbital equivalence relation. In this spirit we naturally land in the world of countable standard Borel equivalence relations, namely those ones with countable classes and defined on a standard Borel probability space.
Feldmann and Moore \cite{feldman:moore} proved that any such relation $\calR$ is actually the orbital equivalence relation of some countable group. 
They also introduced a cohomology theory $\mathrm{H}^\bullet(\calR,T)$ with coefficients in an Abelian Polish group $T$. When the cohomological degree is one, we can actually extend the definition to any topological group $G$. Since a countable standard Borel relation is actually an orbital equivalence relation of some action $\Gamma \curvearrowright X$, we can lift $1$-cocycles to Borel maps $c:\Gamma \times X \rightarrow G$ satisfying $c(\gamma_1\gamma_2,x)=c(\gamma_1,\gamma_2x)c(\gamma_2,x)$ for every $\gamma_1,\gamma_2 \in \Gamma$ and almost every $x \in X$. We denote by $\mathrm{H}^1(\Gamma \curvearrowright X,G)$ the quotient of $1$-cocycles modulo cohomology. 

An explicit computation of $\mathrm{H}^1(\Gamma \curvearrowright X,G)$ may reveal quite harsh. For this reason it can be helpful to restrict the attention to specific families of groups $\Gamma$ and $G$. 
For instance, when $\Gamma$ is a higher rank lattice, $G$ is algebraic and $X$ is a standard ergodic probability space, Zimmer superrigidity theorem \cite{zimmer:annals} implies that the subset $\mathrm{H}^1_{ZD}(\Gamma \curvearrowright X,G)$ of Zariski dense classes coincides with the Zariski dense character variety $\chi_{ZD}(\Gamma,G)$. Inspired by both the functorial approach to bounded cohomology defined by Burger and Monod \cite{burger2:articolo} and the techniques used by Bader, Furman and Sauer \cite{sauer:articolo}, the authors \cite{sarti:savini:3}, together with Moraschini \cite{savini2020,moraschini:savini,moraschini:savini:2}, have recently introduced the theory of pull back along Borel $1$-cocycles in bounded cohomology, in the attempt to study more systematically the cohomology $\mathrm{H}^1(\Gamma \curvearrowright X,G)$. 
When $G$ is suitably chosen among semisimple Lie groups, they were able to exploit bounded cohomology to introduce a notion of \emph{maximality} for Borel $1$-cocycles. For such classes of cocycles on rank-one lattices a Zimmer Superrigdity holds.

When $G$ is Hermitian, there exists a preferred generating class in $\Hcb^2(G,\mathbb{R})$, called bounded K\"{a}hler class. The latter can be pulled back along a Borel $1$-cocycle $c:\Gamma \times X \rightarrow G$. In this way one can introduce the notion of \emph{parametrized K\"{a}hler class}. If $G$ is not of tube type, this is a complete invariant for the cohomology class of a Zariski dense cocycle \cite{sarti:savini:2}. Therefore one has an injection $K_X:\mathrm{H}^1_{ZD}(\Gamma \curvearrowright X,G) \rightarrow \Hb^2(\Gamma,\mathrm{L}^\infty(X,\mathbb{R}))$ of the Zariski dense cohomology of $\Gamma \curvearrowright X$ in the bounded cohomology of $\Gamma$ with twisted coefficients.

Since the theory of continuous bounded cohomology was originally developed to handle continuous coefficient modules and continuous representations, using it to study Borel 1-cocycles, whose nature is inherently measurable, appeared somewhat artificial to us. Our motivation behind this work stems from the conviction that any bounded cohomology theory applied within the framework of measured group theory should fully respect the measurable structure, without imposing any continuity requirements.

\subsection{Measurable cohomology of measured groupoids} The purpose of this paper is to set the foundations of a new cohomology theory in the framework of measured group theory,
exploiting the powerful tools coming from bounded cohomology. The starting point of our investigation relies on the possibility to view both group actions and equivalence relations as particular instances of the more general definition of \emph{groupoid}. A groupoid is a small category where all the morphisms are invertible. More roughly, a groupoid $\calG$ can be viewed as a weakened version of a group, where only some pairs of elements are composable (see Definition \ref{def_groupoid}). Additionally the set of units, usually denoted by $\calG^{(0)}$, does not boil down only to the neutral element, but it may be larger and there exist canonical maps $s,t:\calG \rightarrow \calG^{(0)}$, called \emph{source} and \emph{target} (see Definitions \ref{def_groupoid} and \ref{def_units}). 

Any equivalence relation $\calR$ defined on a subset $X$ can be naturally endowed with a groupoid structure. In this case the set of units coincides with $X$ and the maps $s,t$ are the restrictions to $\calR$ of the projections on the factors for $X \times X$ (Example \ref{example_groupoid_relation}). Another crucial example is given by a group $G$ acting on a set $X$. In this case the product $G \times X$ admits a structure of groupoid whose space of units is $X$ (Example \ref{example_groupoids_action}). The source map is simply the projection on $X$, whereas the target map coincides with the action map. This groupoid is called \emph{semidirect product groupoid} and it is usually denoted by $G \ltimes X$. If we assume that $(X,\mu)$ is actually a Lebesgue $G$-space, to keep track of the dynamical information, one needs to introduce a measured structure on the associated semidirect groupoid. More generally, a \emph{measured groupoid} is a pair $(\calG,C)$, where $\calG$ is a Borel groupoid and $C$ is a measure class on $\calG$ containing a \emph{symmetric} and \emph{quasi-invariant} probability measure (Definition \ref{definition_measured_groupoid}). In the case of the semidirect groupoid $\calG=G \ltimes X$ of a locally compact group $G$ acting in a measure preserving way, we can simply choose the class of the product between the Haar measure on $G$ and $\mu$. 

In the past decades, several cohomology theories of (measured) groupoids have been introduced. We already mentioned the cohomology theory of countable standard relations defined by Feldman and Moore \cite{feldman:moore}. Before that, Westman \cite{westman69} exploited the measured structure to introduce a cohomology theory via the resolution of measurable functions on fiber products of the starting groupoid. Anantharaman-Delaroche \cite{delaroche05} studied Westman's cohomology and obtained a vanishing result in degree one for measured groupoids with property $(T)$. Kida \cite{kida} showed that Westman's cohomology vanishes for countable ergodic probability measure preserving treeable relations when the cohomological degree is greater than or equal to $2$. 

In this paper we define the \emph{measurable bounded cohomology} of a measured groupoid $(\calG,C)$. We first need to understand which spaces are allowed to be chosen as coefficients. A \emph{measurable coefficient $\calG$-module} $E$ is the dual of a separable Banach space endowed with an isometric $\calG$-action whose contragradient representation has measurable orbital maps (see Definition \ref{definition_measurable_coefficient_module}).  With this choice of coefficients, we can consider the fiber product $\calG^{(\bullet)}$ with respect to the target map $t$, namely the space of tuples having the same target. Our cohomology theory is defined by taking the $\calG$-invariants of the resolution of essentially bounded weak$^*$ measurable functions $\Linfw(\calG^{(\bullet)},E)$, together with the usual homogeneous coboundary operator (Definition \ref{definition_measurable_bounded_cohomology}). As it happens for groups, the cohomology of such complex, denoted by $\Hmb^{\bullet}(\calG,E)$, inherits a natural seminormed structure induced by the essential supremum. When $\calG$ is actually a locally compact group, we obtain exactly the usual continuous bounded cohomology (Remark \ref{remark_bounded_cohomology_group}). 

The first important point of our cohomology theory is that it is invariant in the isomorphism class of the given groupoid (it is even more, being invariant in the similarity class, see Definition \ref{definition_homomorphism_homotopy_measured_groupoid}). Recall that an orbit equivalence between two essentially free actions $\Gamma \curvearrowright X$ and $\Lambda \curvearrowright Y$ on standard Borel probability spaces induces an isomorphism between the associated semidirect groupoids. As a consequence, we get that the measurable bounded cohomology groups of semidirect groupoids of orbit equivalent actions are isomorphic (Corollary \ref{corollary_orbit_equivalence}). In case of ergodic actions of countable groups, we can actually extend the result to weakly orbit equivalent actions. In other words, we can say that weakly isomorphic $\mathrm{II}_1$-relations share the same measurable bounded cohomology (Remark \ref{remark_ergodic_relation_weak_invariant}).  

Moving on in our investigation we obtain an exponential law which allows us to relate the measurable bounded cohomology of a semidirect groupoid $\calG=G \ltimes X$ with the usual continuous bounded cohomology of $G$ with twisted coefficients. 

\begin{intro_thm}\label{theorem_expo}
Let $G$ be a locally compact second countable group, let $X$ be a Lebesgue $G$-space and denote by $\calG=G\ltimes X$ the associated measured semidirect groupoid.
If $E$ is a measurable coefficient $\calG$-module we have canonical isometric isomorphisms
\begin{equation*}
\Hmb^{\bullet}(\calG,E)\cong \Hcb^{\bullet}(G,\Linfw(X,E)) \ ,
\end{equation*}
where the right-hand side denotes the continuous bounded cohomology of $G$ with coefficient in $\Linfw(X,E)$.
\end{intro_thm}

Notice that the $G$-action on $\Linfw(X,E)$ is induced by the structure of coefficient $\calG$-module of $E$. The usual exponential law for essentially bounded weak$^*$ measurable functions and the possibility to compute the twisted continuous bounded cohomology of $G$ via the $G$-invariants of the resolution $\Linfw(G^\bullet,\Linfw(X,E))$ enable us to write an explicit isomorphism at the level of cochains. 

Given a discrete countable group $\Gamma$ acting on a standard Borel probability space $X$, we introduce a \emph{comparison map} for the associated semidirect groupoid $\mathcal{G}=\Gamma \ltimes X$ (Definition \ref{definition comparison map}). For a separable Banach module $E$ with trivial $\calG$-action, the comparison map takes values in Westman's cohomology $\mathrm{H}_{\textup{m}}^\bullet(\Gamma \ltimes X,E)$ \cite{westman69}. Our Theorem \ref{theorem_expo} and Westman's exponential law \cite[Theorem 1.0]{westman:71} show that the comparison map for $\Gamma \ltimes X$ is actually conjugated to the composition of the usual comparison map $\Hb^\bullet(\Gamma,\Linf(X,E)) \rightarrow \mathrm{H}^\bullet(\Gamma,\Linf(X,E))$ with the map induced by the change of coefficients $\Linf(X,E) \rightarrow \mathrm{L}^0(X,E)$ (Proposition \ref{proposition comparison map}). 

The stability of bounded cohomology of semidirect groupoids of orbit equivalent actions, together with Theorem \ref{theorem_expo}, leads to the following 

\begin{intro_thm}\label{theorem_expo_orbit_equivalence}
Let $\Gamma,\Lambda$ be two countable groups and consider $\Gamma \curvearrowright X$ and $\Lambda \curvearrowright Y$ two essentially free measure preserving actions on standard Borel probability spaces. Denote by $\calG\coloneqq \Gamma\ltimes X$ and by $\calH\coloneqq \Lambda\ltimes Y$. 
If $T:X\rightarrow Y$ is a measurable isomorphism defining an orbit equivalence between $\Gamma \curvearrowright X$ and $\Lambda \curvearrowright Y$, then for every measurable coefficient $\calH$-module $E$ we have isometric isomorphisms
$$
\Hb^{\bullet}(\Lambda,\Linfw(Y,E)) \cong \Hb^{\bullet}(\Gamma,\Linfw(X,E)) \ ,
$$
induced by the cochain maps
\begin{gather*}
\mathcal{J}^{\bullet+1}:\Linfw(\Lambda^{\bullet+1}, \Linfw(Y,E))^{\Lambda} \rightarrow  \Linfw(\Gamma^{\bullet+1}, \Linfw(X,E))^{\Gamma}\,\\
 \mathcal{J}^{\bullet+1}\theta (\gamma_1,\ldots,\gamma_{\bullet+1})(x)= 
\theta (\sigma_T(\gamma_1,\gamma_1^{-1}x),\ldots,\sigma_T(\gamma_{\bullet+1},\gamma_{\bullet+1}^{-1}x))(T(x))
\end{gather*}
where $\gamma_1,\ldots,\gamma_{\bullet+1}$ in $\Gamma$ and $x\in X$ and $\sigma_T:\Gamma\times X\rightarrow \Lambda$ is the cocycle associated to the orbit equivalence.

If the actions are also ergodic but only weakly orbit equivalent, then we have similar isomorphisms
$$
\Hb^{\bullet}(\Lambda,\Linfw(Y,E)) \cong \Hb^{\bullet}(\Gamma,\Linfw(X,E)) \ .
$$
\end{intro_thm}

Just to lighten the notation, in the statement above we dropped the subscript ``c" for ``continuous", since the involved groups are endowed with the discrete topology. 

Our exponential law can be applied also to \emph{factors}: given two different actions $\Gamma \curvearrowright (X,\mu_X), \Gamma \curvearrowright (Y,\mu_Y)$, we say that $Y$ is a factor of $X$ is there exists a $\Gamma$-equivariant measure-preserving Borel map $T:X \rightarrow Y$. The function $T$ induces a morphism at the level of semidirect groupoids and we show that the associated map in bounded cohomology must be injective when the degree is equal to $2$, for any Hilbert coefficient module (Proposition \ref{prop_factor}). 

Given a locally compact group $G$, if we consider the semidirect groupoid $G\ltimes G/H$ of a transitive continuous action, where $H<G$ is a closed subgroup, 
we are able to establish a sort of Eckmann-Shapiro isomorphism between the measurable bounded cohomology of $G \ltimes G/H$ and the continuous bounded cohomology of $H$. More precisely we have the following

\begin{intro_prop}\label{proposition_homogeneous_semidirect_product}
Let $G$ be a locally compact group and let $H<G$ be a closed subgroup. Let $E$ be a coefficient $G$-module in the sense of Monod. If we endow $E$ with the structure of measurable coefficient $G \ltimes G/H$-module coming from the $G$-action, then we have isometric isomorphisms
$$
\Hmb^\bullet(G \ltimes G/H,E) \cong \Hcb^{\bullet}(H,E) \ .
$$
In particular, when $H$ is amenable the cohomology vanishes identically when the degree is greater than or equal to one. 
\end{intro_prop}

When $G$ is a higher rank simple Lie group and $(X,\mu)$ is a standard Borel probability $G$-space, Proposition \ref{proposition_homogeneous_semidirect_product}, together with the Stuck-Zimmer theorem \cite[Theorem 2.1]{stuck:zimmer}, implies that the bounded cohomology of the semidirect groupoid $G \ltimes X$ boils down the usual bounded cohomology of a suitable lattice $\Gamma< G$ (Corollary \ref{cor_stuck_zimmer}). 

Another important consequence of the exponential law is the vanishing of the degree $2$ measurable bounded cohomology for the semidirect groupoid of either the Thompson group $F$ (Corollary \ref{corollary_thompson}), a higher rank lattice (Proposition \ref{proposition_irreducible_higher_rank}) or an irreducible lattice in a product (Proposition \ref{proposition_lattice_products}). The latter results are deeply related to the vanishing of the parametrized class introduced by the authors \cite{sarti:savini:2} (Remark \ref{remark_vanishing_parametrized}). 

\subsection{Amenability of measured groupoids} The first notion of amenability for groups dates back to Von Neumann \cite{Neu29}. A locally compact group $G$ is \emph{amenable} if it admits a positive norm-one $G$-invariant mean defined on $\Linf(G,\mathbb{R})$. Amenability can be alternatively defined in terms of a fixed point property. More precisely, a group $G$ is amenable if, for any continuous linear $G$-action on a locally convex topological vector space $E$, any compact convex $G$-invariant subset of $E$ admits a fixed point (see Pier's book \cite[Section 1.4, Theorem 5.4]{pier}). Amenable groups are \emph{boundedly acyclic}, in the sense that their continuous bounded cohomology vanishes identically for every coefficient module \cite[Corollary 7.5.11]{monod:libro} and any positive degree. 
It is worth mentioning that explicit computations of bounded cohomology are rare, and the class of amenable groups is very significant since for it such invariant is fully known. 

Generalizing the fixed point property, Zimmer \cite[Chapter 4]{zimmer:libro} extended the notion of amenability to Lebesgue $G$-spaces. A Lebesgue $G$-space $X$ is \emph{amenable} if, for any linear action on the dual $E^*$ of separable Banach space $E$ induced by a Borel $1$-cocycle $G \times X \rightarrow \mathrm{Isom}(E)$, any invariant Borel field of convex weak$^*$ compact subsets of the closed unit ball in $E^*$ admits an invariant section. Later, Adams, Elliot and Giordano \cite{AEG} proved that the fixed point property is equivalent to the existence of a $G$-equivariant conditional expectation $\Linf(G \times X,\mathbb{R}) \rightarrow \Linf(X,\mathbb{R})$. For an amenable standard Borel $G$-space $X$ and any coefficient $G$-module $E$, Burger and Monod \cite[Theorem 1]{burger2:articolo} proved that the $G$-module of essentially bounded functions $\Linfw(X,E)$ is \emph{relatively injective}. As a consequence, the continuous bounded cohomology $\Hcb^\bullet(G,\Linfw(X,E))$ vanishes when the degree is greater than or equal to one \cite[Proposition 7.4.1]{monod:libro}. 

Similarly to the case of groups, the notion of amenability for groupoids can be given in different, but equivalent, ways.
We say that a measured groupoid $(\calG,C)$ is \emph{amenable} if it admits an invariant conditional expectation $\Linf(\calG) \rightarrow \Linf(\calG^{(0)})$ \cite[Definition 2.7]{delaroche:renault:01}. In the particular case of semidirect groupoids, the previous definition is exactly the one given by Adams, Elliot and Giordano. We show that any amenable measured groupoids is boundedly acyclic, generalizing the bounded acyclicity of amenable locally compact groups.

\begin{intro_thm}\label{theorem_amenability}
Let $\calG$ be an amenable measured groupoid and $E$ a measurable coefficient $\calG$-module. Then 
$$\Hmb^{\bullet}(\calG,E)\cong 0$$
whenever the degree is greater than or equal to one. 
\end{intro_thm}

The key point in the proof is that we can apply Hahn disintegration theorem \cite[Theorem 2.1]{Hahn} to realize the space $\Linfw(\calG^{(\bullet)},E)$ as a suitable fiber Bochner space over the units $\calG^{(0)}$ (Equation \ref{equation_disintegration_isomorphism}). The disintegration applies also the the conditional expectation and we get back an \emph{invariant system of means} (Definition \ref{definition_invariant_system_of_means}). With these ingredients, the proof is similar to the one in Frigerio's book \cite[Theorem 3.6]{miolibro}: by means of such invariant system, we construct a contracting homotopy for the complex $(\Linfw(\calG^{(\bullet)},E)^\calG,\delta^\bullet)$.

It is worth mentioning that Blank \cite{blank} already proved the bounded acyclicity for discretised amenable groupoids. Similarly, but using arguments not coming from homological algebra, Anantharaman-Delaroche \cite[Proposition 3.10, Proposition 3.11, Proposition 3.12]{delaroche:91} proved the vanishing of the first bounded cohomology group in the amenable case. In those cases, one can actually consider a more generally family of coefficients, namely \emph{bundles of coefficient modules}. 

We conclude this introduction noticing that Theorem \ref{theorem_amenability} is a generalization of several results that can be proved by the exponential law. Using the latter, one can prove that an amenable $\mathrm{II}_1$-relation is boundedly acyclic (Proposition \ref{proposition_ergodic_amenable_relation}). A similar statement holds for the semidirect groupoid of an amenable action (Proposition \ref{proposition_amenable_space}), in virtue of the relative injectivity proved by Burger and Monod. 

\vspace{5pt}

\paragraph{\textbf{Structure of the paper.}}
In Section \ref{section_Groupoids} we recall the necessary background about groupoids, with a particular focus on the measurable case. 
In Section \ref{section_Measurable coefficient modules} we introduce measurable coefficient modules, which are the natural coefficients for our cohomology theory. The latter is finally defined in Section \ref{section_Bounded cohomology of groupoids}. 
After a brief description of measurable bundles of Banach spaces (Section \ref{section_bundles}), we give the definition of measurable bounded cohomology (Section \ref{section_Measurable bounded cohomology}), we prove the invariance under similarity (Section \ref{section_Functoriality}) and we list some consequences. 
Then we move to the case of semidirect groupoids. Section \ref{section_The_exponential_law} is devoted to relate the continuous bounded cohomology of a group acting on a standard Borel space and the measurable one of the associated semidirect groupoid (Theorem \ref{theorem_expo}).
This result can be declined in different contexts, and in Section \ref{section_Main consequences of the exponential law} we describe some relevant applications. For instance, we consider the case of orbit equivalence and of measure equivalence, and we also deduce a vanishing result for the measurable bounded $2$-cohomology of an action of either the Thompson group, a higher rank lattice or a lattice in a product. We obtain a variant of Eckmann-Shapiro induction and we apply it to the case of higher rank actions that are not essentially free. 
Finally, in Section \ref{section_Amenability}, we prove the vanishing of measurable bounded cohomology for amenable groupoids (Theorem \ref{theorem_amenability}).
\vspace{5pt}

\paragraph{\textbf{Acknowledgements.}} The authors wish to thank Michelle Bucher and Tobias Hartnick for the interest they showed in this paper.

\section{Groupoids}\label{section_Groupoids}

In this preliminary section we are going to recall the necessary background about groupoids. We start giving the basic definitions and describing some examples. Then we move to the framework of measured groupoids, which are the main objects studied in this paper. We refer to the book by Muhly \cite{muhly} and to the one by Anantharaman-Delaroche and Renault \cite{delaroche:renault}.

\begin{defn}\label{def_groupoid}
A \emph{groupoid} is the datum of a set $\calG$, a unitary operation $^{-1}:\calG\rightarrow \calG$ and a semi-definite binary operation $$\calG\times \calG\rightarrow \calG\,,\;\;\;(g,h)\mapsto gh$$ satisfying the following conditions:
\begin{itemize}
\item[(i)] for every $g,h,k\in \calG$ such that $gh$ and $hk$ are defined, then 
also $(gh)k$ and $g(hk)$ are defined and $(gh)k=g(hk)$.
\item[(ii)] for every $g\in \calG$, then $gg^{-1}$ and $g^{-1}g$ are defined.
\item[(iii)] for every $g,h\in \calG$ such that $gh$ is defined then $g^{-1}gh=h$ and 
$ghh^{-1}=g$.
\end{itemize}
We denote by $\calG^{[2]}$ the set of composable pairs in $\calG^2$. The \emph{source map} is the function defined as $s(g)=g^{-1}g$, whereas the \emph{target map} is defined as $t(g)=gg^{-1}$.
\end{defn}

We notice that the images of the source and the target maps coincide. In fact, 
given $x \in \Ima(t)$, namely $x=gg^{-1}$ for some $x \in \calG$, 
then $s(g^{-1})=t(g)=x$, hence $x\in \Ima(s)$. The converse is analogous.

\begin{defn}\label{def_units}
The subset $$\calG^{(0)}\coloneqq\Ima(s)=\Ima(t)$$ of $\calG$ is the \emph{unit space} of $\calG$ and we say that $\calG$ is a \emph{groupoid over $\calG^{(0)}$}.
\end{defn}

If $\calG$ is a groupoid, then 
$(g,h)\in \calG^{[2]}$ if and only if $s(g)=t(h)$.
Indeed, if $(g,h)\in \calG^{[2]}$, then 
$$t(h)h=hh^{-1}h=h=g^{-1}gh=s(g)h$$
and hence $t(h)=s(g)$.
Viceversa, if $$s(g)=g^{-1}g=hh^{-1}=t(h)\,,$$ 
then by conditions (i) and (ii)
$$(g,g^{-1}g) \in\calG^{[2]}\,,\;\;\;  (hh^{-1},h)\in \calG^{[2]}\, \Rightarrow (g,h)\in \calG^{[2]}\,.$$
This basic fact is a useful criteria to establish when a pair is composable or not.

\begin{es}\label{example_groupoid_group}
Every \emph{group} $G$ is a groupoid, where $G^{[2]}=G^2$, 
$G^{(0)}=\{e_{G}\}$ and $t(g)=s(g)=e_G$ for every $g\in G$.
\end{es}

\begin{es}	\label{example_groupoids_action}
Given a group $H$ acting on a set $X$ from the left, we define the \emph{semidirect product groupoid} (or \emph{left action groupoid}) $H\ltimes X$ as follows:
the unit space is $(H\ltimes X)^{(0)}\coloneqq \{e_H\}\times X$ and the set of composable pairs is
$$(H\ltimes X)^{[2]}=\{((h,y),(g,x))\in (H\times X)^2\, | \,  gx=y\}\,.$$
The composition is given by 
$$(h,gx)(g,x)=(hg,x)\,,$$
and the inverse is defined as $$(g,x)^{-1}=(g^{-1},gx)\,.$$
The target map is $t(g,x)=(e_H,gx)$ and the source map is $s(g,x)=(e_H,x)$.

 \end{es}

\begin{es}\label{example_groupoids_action2}
Let $\calG$ be a groupoid and let $S$ be a set with a surjection $t_S:S\rightarrow \calG^{(0)}$.  If we define 
 $$\calG*_{t_S} S\coloneqq \{ (g,s)\in \calG\times S\, | \, s_{\calG}(g)=t_{S}(s)  \}\,,$$
where $s_{\calG}$ is the source on $\calG$, we say that $\calG$ \emph{acts on the left} on $S$ 
if there exists a map $$\calG*_{t_S} S\rightarrow S\,,\;\;\;\; (g,s)\mapsto gs $$
 satisfying the following conditions:
 \begin{itemize}
 \item $(gh)s=g(hs)$, whenever $(gh,s)\in \calG*_{t_S} S$ and $(g,hs) \in \calG*_{t_S} S $;
 \item $t_S(gs)=t_{\calG}(g)$, with $t_{\calG}$ target on $\calG$, whenever $(g,s) \in \calG*_{t_S} S$;
 \item $gg^{-1}s =g^{-1} gs=s$, whenever $(g,s)\in \calG*_{t_S} S$.
 \end{itemize}
Such a left action gives rise to a groupoid denoted by $\calG\ltimes S$ and called \emph{semidirect groupoid}, where 
 $$(\calG\ltimes S)^{[2]}=\{((g_1,s_1),(g_2,s_2))\in ( \calG*_{t_S} S )^2\, |\, g_2 s_2=s_1 \}\,, $$
and the product and the inverse map are as in Example \ref{example_groupoids_action}. The source map is given by $(g,s)\mapsto s$
and the target $t: \calG\ltimes S \rightarrow S$ is $(g,s)\mapsto g s$. When $\calG$ is a group acting on $S$, we recover the semidirect groupoid introduced in Example \ref{example_groupoids_action}.

When $S=\calG$ and $t_S=t_{\calG}$ is the target map of $\calG$, it follows that
$\calG*\calG=\calG^{[2]}$ and the action boils down to the usual composition on $\calG$.
More generally, if $\calG^{(\bullet)}$ denotes the set of $\bullet$-tuples of elements $g_1,\ldots,g_{\bullet}\in \calG$ sharing all the same target, we have a natural $\calG$-action on $\calG^{(\bullet)}$ defined as 
$$\calG*\calG^{(\bullet)} \rightarrow \calG^{(\bullet)}\,,\;\;\; (g,(g_1,\ldots,g_{\bullet}))\mapsto (gg_1,\ldots,gg_{\bullet})\,.$$
%
\end{es}
 
\begin{es}\label{example_groupoid_relation}
Let $X$ be a set and consider an \emph{equivalence relation} $\calR\subset X\times X$. 
The space $\calR$ has a natural structure of groupoid over $X$ whose set of composable pairs is 
$$\calR^{[2]}\coloneqq \{((x,y),(y,z))\in \calR^2\}\subset X\times X \,.$$
The composition map is defined as
$$(x,y)\circ (y,z)= (x,z)$$
 and the inverse of an element $(x,y)$ is $(y,x)$. 
There are two extreme cases, namely when either $\calR=X\times X$ or $\calR=\Delta\subset X\times X$. They are called \emph{trivial groupoid} and \emph{co-trivial groupoid}, respectively.

\end{es}

Group homomorphisms are generalized in this context by the following
\begin{defn}\label{definition_homomorphism_homotopy_similarity}
A \emph{homomorphism} between two groupoids $\calG$ and $\calH$ is a set-theoretic
map $f:\calG\rightarrow \calH$ such that $(g,h)\in \calG^{[2]} $ implies $(f(g),f(h))\in \calH^{[2]}$ and, in this case, it holds that
$$f(gh)=f(g)f(h)\,.$$
An invertible groupoid homomorphism is a groupoid \emph{isomorphism} and, whenever $\calG$ and $\calH$ are related by an isomorphism, we say that they are \emph{isomorphic} and we write $\calG\cong \calH$.

Given two homomorphisms $f_0,f_1:\calG\rightarrow \calH$, a \emph{similarity} from $f_0$ to $f_1$ is a map $h:\calG^{(0)}\rightarrow \calH$ such that for every $x\in \calG^{(0)}$ one has $s(h(x))=f_0(x)$, $t(h(x))=f_1(x)$ and for every $g\in \calG$ it holds
\begin{equation}\label{equation_homotopy}
h(t(g)) f_0(g)h(s(g))^{-1}= f_1(g)\,.
\end{equation}
If such a similarity exists, we write $f_0\simeq f_1$. Notice that the composability of the left-hand side of Equation \eqref{equation_homotopy} descends directly from the definition. 

Two groupoids $\calG,\calH$ are \emph{similar} if there exist homomorphisms 
$f_0:\calG\rightarrow \calH$ and $f_1:\calH\rightarrow \calG$ such that 
$$f_1\circ f_0\simeq \id_{\calG}\,,\;\;\; f_0\circ f_1\simeq \id_{\calH}\,.$$
Here the notation $\mathrm{id}$ refers to the identity map. 
\end{defn}

\begin{es}
Given a group action $H\curvearrowright X$ there exists a natural equivalence relation on $X\times X$, called \emph{orbital equivalence relation}, given by $(x,y) \in \mathcal{R}_H$ if and only if $y=hx$ for some $h \in H$. When the action is free, we have an isomorphism 
$$
\chi: H \ltimes X \rightarrow \calR_{H} \ , \ \ \chi(h,x):=(x,hx) \ , 
$$
between the semidirect groupoid $H\ltimes X$ of Example \ref{example_groupoids_action} and $\calR_H$ with the groupoid structure defined in Example \ref{example_groupoid_relation}.
\end{es}

\begin{oss}\label{remark_bijection_relations_groupoids}
The class of groupoids coming from equivalence relations is particularly interesting thanks to the following fact. Given a groupoid $\calG$ over $X$ with source and target maps denoted by $s$ and $t$, respectively, one can associate an equivalence relation $\calR_{\calG}\subset X\times X$ defined as
$$\calR_{\calG}\coloneqq \{(s(g),t(g))\, | \, g\in \calG\}\,.$$
If we identify two equivalence relations $\calR,\calR'$ whenever the associated groupoids (see Example \ref{example_groupoid_relation}) are isomorphic, it follows immediately that isomorphic groupoids $\calG,\calH$ produce equivalent relations $\calR_{\calG}\cong\calR_{\calH}$.

Notice that the equivalence relation $\calR_{\calG}$ forgets about the existence of non-trivial elements $g\in \calG$ with $s(g)=t(g)=x$, since in $\calR_{\calG}$ they are all identified with the diagonal element $(x,x)$. This phenomenon occurs for instance when $\calG$ is a group $G$. In fact, every element $g\in G$ stabilizes the unit $e_G$, namely $s(g)=t(g)=e_{G}$ and the equivalence relation $\calR_G$ is trivial.
This means that the above construction does not keep track of stabilizers of units.
For this reason, a well-defined correspondence between groupoids and equivalence relations  exists if we restrict to the subfamily of \emph{principal groupoids}, namely those such that the map $$\chi:\calG\rightarrow X\times X\,,\;\;\; \chi(g)=(s(g),t(g))$$
is injective.
In fact, for a given a set $X$ there exists a natural bijection 
\begin{equation}\label{equation_remark_bijection_relations_groupoids}
\left\{ \text{Equivalence relations on } X\right\}/_{\cong} \leftrightarrow\left\{ \text{Principal groupoids over } X\right\}/_{\cong}\,.
\end{equation}
and both sets are quotiented modulo isomorphism.

\end{oss}

Since we are interested in groupoids not only for their algebraic structure but also as a model for measure theoretic objects, such as group actions on measure spaces, we are going to endow them with a Borel structure.
\begin{defn}\label{definition_borel_groupoid}
A \emph{Borel groupoid} is a groupoid endowed with a $\sigma$-algebra such that the composition and the inverse are Borel maps (here $\calG^{[2]}$ is endowed with the Borel structure inherited by $\calG\times \calG$).

If $U\subset X$ is a Borel set, we denote by $\calG_{|U}$ the groupoid 
$$\calG_{|U}\coloneqq \{ g\in \calG\,|\, t(g),s(g)\in U\}\,$$
whose target and source maps are the restriction of the ones on $\calG$ and whose unit space is $U$.
\end{defn}

The notion of \emph{homomorphisms} and \emph{similarity} in the context of Borel groupoids are those given in Definition \ref{definition_homomorphism_homotopy_similarity}, with the further request that the involved maps must be Borel.
From now on, since we will work in the Borel context, we will tacitly assume that all maps are Borel. Furthermore, unless otherwise mentioned, all Borel spaces are assumed to be \emph{standard}.

\begin{es}
Let $G$ be a locally compact group acting on a standard Borel probability space $(X,\mu)$ in a measure class preserving way. We endow the semidirect groupoid $\calG=G \ltimes X$ with the Borel product structure. For another topological group $H$, a Borel morphism $f:\calG \rightarrow H$ must satisfy
$$
f(g_0g_1,x)=f(g_0,g_1x)f(g_1,x) \ ,
$$
for every $g_0,g_1 \in G$ and every $x \in X$. Such a map is called \emph{strict Borel cocycle}. Analogously, a similarity between two strict Borel cocycles $f_0,f_1:\calG \rightarrow H$ boils down to a \emph{strict cohomology} between $f_0$ and $f_1$, namely a measurable function $h:X \rightarrow H$ such that 
$$
h(gx)f_0(g,x)h(x)^{-1}=f_1(g,x) \ ,
$$
for every $g \in G$ and every $x \in X$. 
\end{es}

In what follows we will be mainly interested in (probability) measure preserving actions of a locally compact group on a (probability) measure space. In this context, the semidirect groupoid can be considered with a natural
$\sigma$-algebra and each fiber of the target map, being a copy of the group itself, can be endowed with the (renormalized) Haar measure. This produces a natural (probability) measure on the whole semidirect groupoid, obtained by ``spreading" the Haar measures on the fibers in a compatible way with the (probability) measure on the unit space (see Example \ref{example_groupoid_action_measurable}). This situation can be generalized to all Borel groupoids, and it requires the following
\begin{defn}\label{definition_borel_system}
A \emph{Borel Haar system of measures} for the target map $t:\calG\rightarrow \calG^{(0)}$ is a family $\rho=\{\rho^x\,|\,x\in \calG^{(0)}\}$ of $\sigma$-finite measures, where each $\rho^x$ is supported on the fiber $\calG^x\coloneqq t^{-1}(x)$, such that for every non-negative Borel map $f$ on $\calG$, the function 
$$x\mapsto \rho^x(f)$$ is Borel and such that $\rho$ is $\calG$-invariant, namely 
\begin{equation}\label{equation invariance system target}
g\rho^{s(g)}=\rho^{t(g)}\,.
\end{equation}
The latter condition can be expressed by the equality
$$\int_{\calG} f(g h)d \rho^{s(g)} (h)=\int_{\calG} f(h)d \rho^{t(g)}(h)\,.$$
\end{defn}

\begin{defn}\label{definition_invariant_borel_system}
Given a Borel groupoid $\calG$ endowed with a Borel Haar system of measures $\rho=(\rho^x)_{x \in \calG^{(0)}}$ for the target map $t$, a measure $\mu$ on $\calG^{(0)}$ is \emph{quasi-invariant with respect to $\rho$} if the convolution defined by 
\begin{equation} \label{equation_convolution_measures}
\mu*\rho(f)=\int_{\calG^{(0)}}\left( \int_{\calG}  f(g)d\rho^x(g)\right)d\mu(x)
\end{equation} 
 is equivalent to its direct image under the inverse map $g\mapsto g^{-1}$.
\end{defn}

The convolution $\mu \ast \rho$ naturally defines a measure on $\calG$ whose class $C$ is \emph{symmetric} and \emph{invariant}. A measure class $C$ on $\calG$ is \emph{symmetric} if any measure $\nu \in C$ is equivalent to the direct image $\nu^{-1}$ under the inverse map $g\rightarrow g^{-1}$. To better explain the notion of invariance, we recall that given a $\sigma$-finite measure $\nu$ on $\calG$, equivalent to a probability measure $\widetilde{\nu}$ with $t_*\widetilde{\nu}=\mu$, it can be \emph{disintegrated} with respect to the target $t:\calG \rightarrow \calG^{(0)}$ (see for instance Effros \cite[Lemma 4.4]{Eff66} or Hahn \cite[Theorem 2.1]{Hahn}). More precisely, there exists a Borel map from the unit space $\calG^{(0)}$ to the space of measures on $\calG$, sending $x \mapsto \nu^x$, where $\nu^x$ is supported on $\calG^x$, such that
\begin{equation}\label{equation_isom_disintegration}
\int_{\calG} f(g)d\nu(g)=\int_{\calG^{(0)}} \left(\int_{\calG} f(g)d\nu^x(g)\right) d\mu(x) \ ,
\end{equation}
for any $f \in \mathrm{L}^1(\calG)$. We call $(\nu^x)_{x \in \calG^{(0)}}$ the $t$-\emph{disintegration} of the measure $\nu$ (notice that \cite[Theorem 2.1]{Hahn} states the essential uniqueness of such disintegration). We say that the $t$-disintegration is \emph{quasi} $\calG$-\emph{invariant} if $g\nu^{s(g)}$ and $\nu^{t(g)}$ define the same measure class, for $\nu$-almost every $g \in \calG$. We say that $C$ is \emph{invariant} if it contains a probability measure whose $t$-disintegration is quasi $\calG$-invariant. 

The previous discussion leads to the following 
\begin{defn}\label{definition_measured_groupoid}
A \emph{measured groupoid} is a Borel groupoid $\calG$ endowed with a symmetric and invariant measure class $C$.
\end{defn}

The above considerations show that a Borel groupoid $\calG$ equipped with a Haar system $(\rho^x)_{x \in \calG^{(0)}}$ and quasi-invariant measure $\mu$ on $\calG^{(0)}$ is in fact a measured groupoid by taking the measure class of the convolution $\mu \ast \rho$. Conversely, given a measured groupoid $(\calG,C)$, Hahn \cite{Hahn} showed that we can write the measure class $C$ in an essentially unique way as the convolution defined by Equation \eqref{equation_convolution_measures} on an inessential contraction of $\calG$ (see Definition \ref{definition_inessential_contraction}). It is worth noticing that, as in the case of locally compact groups, the finiteness of a measure $\nu$ in $C$ conflicts with the invariance of its $t$-disintegration: if one requires that $\nu$ is a probability measure, then its disintegration is in general only quasi-invariant; on the other hand, if one needs an invariant disintegration, then $\nu$ is $\sigma$-finite but not a priori finite.  

Given a measure class $C$ defining a measured groupoid $\calG$, we will denote by $\nu$ a symmetric and quasi-invariant probability measure in $C$, by $(\nu^x)_{x \in \calG^{(0)}}$ the $t$-disintegration of $\nu$ and by $\mu$ the direct image of $\nu$ on the space of units $\calG^{(0)}$.
Finally, we will refer to a measured groupoid either as a pair $(\calG,C)$ or as a pair $(\calG,\nu)$, with $C$ and $\nu$ as above. Only in some specific cases, like the example below, we will prefer to consider the Borel Haar system on $\calG$ rather than the quasi-invariant probability measure $\nu$. 

\begin{es}\label{example_groupoid_action_measurable}
Let $G$ be a locally compact group with Haar measure $\rho$ and $(X,\mu)$ a Lebesgue $G$-space, that is a standard Borel measure space on which $G$ acts by preserving the measure class. The Haar measure $\rho^x=\rho$ on each $G^x=G$ defines a Borel Haar system of measures and $\mu$ is quasi-invariant with respect to $\rho^x$. This turns the semidirect groupoid $G\ltimes X$ into a measured groupoids.

For example, if $H<G$ is a closed subgroup, then the quotient $G/H$ admits a natural structure of Lebesgue $G$-space. As a consequence we can naturally view $G\ltimes G/H$ as a measured groupoid. 

More generally, consider the semidirect groupoid associated to the left action of a measured groupoid $\calG$ on a standard Borel probability space $(S,\tau)$. In this context $\calG \ltimes S$ is Borel if the action map of Example \ref{example_groupoids_action2} is so. If $\tau$ is quasi-invariant in the sense of 
Anantharaman-Delaroche and Renault \cite[Definition 3.1.1]{delaroche:renault}, that is it satisfies a condition similar to Equation \ref{equation_convolution_measures}, the semidirect groupoid $\calG \ltimes S$ can be endowed with a natural structure of measured groupoid. Since we will mainly work with the case $(S,\tau)=(\calG^{(0)},\mu)$, we omit any other detail and we refer the reader to \cite[Chapter 3]{delaroche:renault} for a complete discussion about measured semidirect groupoids.
\end{es}

In the measurable context everything is considered up to null-measure set. For this reason, it is often useful to restrict to a ``full-measure subgroupoid" of a given measured groupoid. This motivates the following 
\begin{defn}\label{definition_inessential_contraction}
Let $\calG$ be a measured groupoid with unit space $(X,\mu)$  and with measure class $C$. Let $U\subset X$ be a Borel subset such that $\calG_{|U}$ is $\nu$-conull for some (and hence any) $\nu\in C$. The \emph{inessential contraction} of $\calG$ to $U$ is the measured groupoid $\calG_{|U}$ endowed with the measure class $C_{|U}$ given by the restriction of (the elements of) $C$.
\end{defn}

In the sequel we want consider measurable maps defined on a measured groupoid identified up to null sets and their composition with groupoid homomorphisms. Such composition can easily give troubles: for instance the image of the Borel homomorphism can have null-measure in the target. In this case, we would attempt to evaluate an equivalence class of measurable functions on a null set, which is clearly meaningless.
For this reason, it is convenient to select a strict subfamily of the whole set of Borel homomorphisms of Definition \ref{definition_borel_groupoid} which respects the measure class.

\begin{defn}\label{definition_homomorphism_homotopy_measured_groupoid}
Let $\calG$ (respectively $\calH$) be a measured groupoid with unit space $(X,\mu_X)$ (respectively $(Y,\mu_Y)$) endowed with a measure $\nu$ (respectively $\tau$) representing the invariant symmetric measure class of $\calG$ (respectively of $\calH$).
A \emph{homomorphism} between $(\calG,\nu)$ and $(\calH,\tau)$ is a measurable map $f:\calG\rightarrow \calH$ such that $f_*\nu$ is absolutely continuous with respect to $\tau$, for almost every $(g,h)\in \calG^{[2]}$ one has $(f(g),f(h))\in \calH^{[2]}$ and in this case
$$f(gh)=f(g)f(h)\,.$$

A homomorphism $f:\calG \rightarrow \calH$ is an \emph{isomorphism} if there exists another morphism $k:\calH \rightarrow \calG$ which is an inverse of $f$ when we restrict both to two inessential contractions, one of $\calG$ and the other one of $\calH$. In this case we say that $(\calG,\nu)$ and $(\calH,\tau)$ are isomorphic and we write $\calG\cong \calH$.

A \emph{weak isomorphism} between $(\calG,\nu)$ and $(\calH,\tau)$ is an isomorphism $f:\calG_{|U}\rightarrow \calH_{|V}$ for some $U\subset X$ and $V\subset Y$ of positive measure, where the measures on $\calG_{|U}$ and on $\calH_{|V}$ are $\nu'(A)\coloneqq \nu(A\cap U) \cdot \nu(U)^{-1}$ and $\tau'(B)\coloneqq \tau(B\cap V) \cdot \tau(V)^{-1}$, respectively.

A \emph{similarity} between $f_0,f_1:\calG\rightarrow\calH$ is a measurable map $h:X\rightarrow \calH$ such that for $\mu_X$-almost every $x\in X$ one has $s(h(x))=f_0(x)$ and $t(h(x))=f_1(x)$ and 
for $\nu$-almost every $g\in \calG$ it holds
$$h(t(g)) f_0(g)h(s(g))^{-1}= f_1(g)\,.$$
If such a similarity exists, we still write $f_0\simeq f_1$.

Two measured groupoids $\calG,\calH$ are \emph{similar} if there exist homomorphisms 
$f_0:\calG\rightarrow \calH$ and $f_1:\calH\rightarrow \calG$ as above such that 
$$f_1\circ f_0\simeq \id_{\calG}\,,\;\;\; f_0\circ f_1\simeq \id_{\calH}\,,$$
where $\mathrm{id}$ refers to the identity map. 
\end{defn}

\begin{oss}\label{remark_homomorphism}
It is worth noticing that our homomorphisms may not send units to units, since the unit space can have null measure in the groupoid. 
Definition \ref{definition_homomorphism_homotopy_measured_groupoid} refers to \emph{a.e. homomorphisms} in Muhly's book \cite[Definition 4.6]{muhly}.
Instead of such morphisms, both Ramsay and Muhly work with Borel maps that restrict to homomorphisms on an inessential contraction and that satisfy the condition of absolute continuity of Definition \ref{definition_homomorphism_homotopy_measured_groupoid} at the level of unit spaces.
The first one call them simply \emph{homomorphisms}, whereas the second one refers to them as \emph{weak homomorphisms}.
Thanks to the work done by Ramsay, any a.e. homomorphism coincides almost everywhere with a weak homomorphism in his notation \cite[Theorem 5.1]{ramsay}. 
Although weak homomorphisms send almost all units to units,
some issues arise in their composition, since a priori it may not make sense. Ramsay circumvented this problem by considering \emph{similarity classes} of weak homomorphisms.
On the other hand, the composition of two homomorphisms in our setting is always defined.
For the purposes of this paper, we therefore prefer to consider such class of morphisms and to rely on Ramsay's result when it is necessary.
\end{oss}

\begin{es}\label{example_factor}
Consider two measure preserving actions $\Gamma \curvearrowright (X,\mu_X), \Gamma \curvearrowright (Y,\mu_Y)$ of a countable group $\Gamma$ on two standard Borel probability spaces. We say that $Y$ is a \emph{factor} of $X$, and we write $Y \leq X$, if there exists a Borel $\Gamma$-equivariant map $T:X \rightarrow Y$ such that $T_\ast \mu_X=\mu_Y$. As noticed by Kechris \cite[Chapter II.10]{Kec09}, a theorem by Rohklin guarantees that if $Y \leq X$ and $X$ is ergodic, then $X$ can be factorized as $Y \times Z$, where $(Z,\mu_Z)$ is some standard Borel probability space, and the action on $X$ is the skew-product of the one on $Y$ with a Borel cocycle $\Gamma \times Y \rightarrow \mathrm{Aut}(Z)$ in the measure preserving automorphisms of $Z$. 

If $Y \leq X$ and we set $\calG\coloneqq \Gamma \ltimes X, \calH\coloneqq\Gamma \ltimes Y$, both endowed with the measured product structure, then the map 
$$
f_T:\calG \rightarrow \calH \ , \ \ f_T(\gamma,x):=(\gamma,T(x)) 
$$
is a homomorphism of measured groupoids in the sense of Definition \ref{definition_homomorphism_homotopy_measured_groupoid}.
\end{es}

\begin{es}\label{example_orbit_equivalence}
Let $\Gamma\curvearrowright (X,\mu_X)$ and $\Lambda\curvearrowright (Y,\mu_Y)$ be measure preserving essentially free actions of countable groups on standard Borel probability spaces. 
An \emph{orbit equivalence} is a Borel map $T:X\rightarrow Y$ with $T_*\mu_{X}= \mu_{Y}$ restricting to a Borel isomorphism between conull Borel subsets $X'\subset X$ and $Y'\subset Y$ and such that 
$T(\Gamma  \cdot x \cap X')=(\Lambda\cdot  T (x)) \cap Y'$ for every $x\in X'$.

A \emph{weak orbit equivalence} is an orbit equivalence between positive measure subsets $U\subset X$ and $V\subset Y$ such that $T_* \mu_{U}=\mu_{V}$, where $\mu_{U}(T)\coloneqq \mu_{X} (T\cap U)\cdot \mu_{X}(U)^{-1}$ for every measurable $T\subset X$ and $\mu_{V}(S)\coloneqq \mu_{Y} (S\cap V)\cdot \mu_{Y}(V)^{-1}$ for every measurable $S\subset Y$.

An orbit equivalence between essentially free actions $\Gamma\curvearrowright (X,\mu_X)$ and $\Lambda\curvearrowright (Y,\mu_Y)$ corresponds to an isomorphism of two inessential contractions of the semidirect groupoids $\Gamma\ltimes X$ and $\Lambda\ltimes Y$. 
More precisely, one can define almost everywhere $$f_T:\Gamma\ltimes X\rightarrow \Lambda\ltimes Y\,,\;\;\; f_T(\gamma,x)\coloneqq (\sigma_{T} (\gamma,x), T(x))$$
where $\sigma_{T}: \Gamma\times X\rightarrow \Lambda$ is the orbit equivalence cocycle, namely it satisfies
$$T(\gamma  x)=\sigma_{T} (\gamma,x)T(x)$$
for every $\gamma\in \Gamma$ and almost every $x\in X$ (notice that $\sigma_{T}$ is well-defined since the $\Lambda$-action is essentially free).
Then $f_T$ is a groupoid isomorphism between two inessential contractions, whose inverse can be built using the inverse of $T$ and exploiting the essentially free $\Gamma$-action. 

Something completely analogous holds in the case of a weak orbit equivalence, but this time we only obtain a weak isomorphism between $\Gamma \ltimes X$ and $\Lambda \ltimes Y$. 

\end{es}

An interesting family of measured groupoids is given by groupoids with countable fibers.
\begin{defn}\label{definition_t_discrete_groupoid}
A measured groupoid $\calG$ is \emph{$t$-discrete} if $\calG^x:=t^{-1}(x)$ is countable for every $x\in X$.
\end{defn}
 
For a $t$-discrete groupoid $(\calG,C)$ over $(X,\mu)$, a representative $\nu$ of the symmetric and invariant measure class $C$ can be defined as
$$\nu(V)\coloneqq \int_X |V\cap \calG^x| d\mu(x)$$
for every Borel subset $V\subset \calG$.

\begin{es}
When $\Gamma$ is a discrete countable group (endowed with the counting measure) acting on a Lebesgue $\Gamma$-space $X$, the semidirect groupoid $\Gamma\ltimes X$ introduced in Example \ref{example_groupoids_action} is $t$-discrete.
\end{es}

\begin{es}\label{remark_feldman_moore_theorem}
If $X$ is a standard Borel space with Borel sigma algebra $\calB$ an equivalence relation $\calR\subset X\times X$ is called \emph{standard} if $\calR\subset  \calB\times \calB$, where $\calB\times \calB$ is the induced Borel structure on $X\times X$. 
Any such equivalence relation whose equivalence classes are countable is in fact a $t$-discrete groupoid (see Example \ref{example_groupoid_relation}).
Feldmann and Moore \cite[Theorem 1]{feldman:moore} showed that any standard equivalence relation on a space $X$ whose equivalence classes are countable comes from an action of a countable group on $X$ (which is not necessarily essentially free). An important family of equivalence relations in the Borel setting is the one of \emph{$\mathrm{II}_1$-relations}, namely countable standard equivalence relations coming from a \emph{measure class preserving} and \emph{ergodic} group action on a \emph{non-atomic} standard Borel space (see also \cite[Section 4.1]{furman}). Such relations are considered in Section \ref{section_Main consequences of the exponential law}, where we prove a vanishing result of the measurable bounded cohomology in the amenable case.

\end{es}


\begin{defn}\label{definition_ergodic_groupoid}
A measured groupoid is \emph{ergodic} if for every Borel subset $U\subset X$ we have that its saturation
$$\calG U\coloneqq \{ t(g)\,|\, s(g) \in U\}$$ 
is either null or conull in $X$.
\end{defn}


\section{Measurable coefficient modules}\label{section_Measurable coefficient modules}

The aim of this section is to introduce the coefficient modules for the cohomological functor that will be defined in Section \ref{section_Bounded cohomology of groupoids}. 
Precisely, we need to formalize how a measured groupoid acts (measurably) on a Banach space and to give a notion of equivariant map between such spaces. 
We also make a comparison between the continuous case considered by Monod \cite{monod:libro}, that will be used in Section \ref{section_The_exponential_law}
to relate measurable bounded cohomology to continuous bounded cohomology.

We start with the the following
\begin{defn}
A \emph{coefficient module} is a Banach space $E$ that is the dual of a separable Banach space $F$.
\end{defn}

We are interested into coefficient modules endowed with an isometric action of a measured groupoid. In this section, when a measured groupoid is mentioned, we will not explicitly refer neither to its symmetric and invariant measure class, nor to any of its representative (see Section \ref{section_Groupoids}). Nevertheless, the sentence ``almost every" should not generate any ambiguity and all the measures should be clear from the context when they are not specified.

\begin{defn}\label{definition_measurbale_G_module}
Let $\calG$ be a measured groupoid. A \emph{measurable} Banach $\calG$-module $E$ is a Banach space endowed with a left action by isometries of $\calG$, that is a function 
$$L:\calG\rightarrow \Isom(E)$$ satisfying
$L(gh)=L(g)L(h)$ for almost every pair $(g,h)\in \calG^{[2]}$
and such that the orbital map 
$$\calG \rightarrow E\,,\;\;\; v\mapsto L(g) v$$ is measurable for every $v\in E$. 
\end{defn}

In particular our measurability request concerns the action on the predual of our module. 
\begin{defn}\label{definition_measurable_coefficient_module}
Let $\calG$ be a measured groupoid over $X$.
A \emph{measurable coefficient $\calG$-module} is a pair $(E,L)$ where $E$ is the dual of a measurable $\calG$-module $E^{\flat}$ and
$$L:\calG\rightarrow \Isom(E)$$ is the contragradient representation of a given $\calG$-action $L^{\flat}$ on $E^\flat$, namely
$$\langle L(g) \lambda,v\rangle \coloneqq \langle \lambda,L^{\flat}(g^{-1}) v\rangle$$
for every $\lambda \in E$, $v\in E^{\flat}$ and $g\in\calG$.
\end{defn}

This is the natural transposition to measured groupoids of the notion of coefficient modules given by Monod \cite[Definition 1.2.1]{monod:libro}.

\begin{es}
Let $G$ be a topological group. A coefficient $G$-module in the sense of Monod is a measurable $G$-module if $G$ is viewed as a groupoid.
Under relatively weak assumptions on $G$, such as local compactness, the converse is also true (see \cite[Proposition 1.1.3]{monod:libro} and Remark \ref{remark_bounded_cohomology_group}).
\end{es}

\begin{es}\label{example_pull_back_via_groupoid_homomorphism}
Let  $\calG,\calH$ be two measured groupoids over $X$ and over $Y$, respectively. Consider $F$ a measurable coefficient $\calH$-module.
A homomorphism $f: \calG \rightarrow \calH$ induces a $\calG$-action on $F$, that is
$$f^*L:\calG\rightarrow \Isom(F)\,,\;\;\; f^*L(g)\coloneqq L(f(g))\,.$$
With a slight abuse of notation, in the sequel we denote by $F$ also the $\calG$-module described above.
\end{es}

A map between coefficient modules consists of a measurable family of linear maps parametrized with the units intertwining the actions.
Precisely, if $\calL(E,F)$ denotes the space of linear bounded operators endowed with the Borel $\sigma$-algebra given by the strong operator topology, we have the following
\begin{defn}\label{definition_morphism_coefficient_modules}
Given two measurable coefficient $\calG$-modules $(E,L_E)$ and $(F,L_F)$, a \emph{$\calG$-map} between them is a measurable map 
$$\phi:\calG^{(0)}\rightarrow \calL(E,F)\,,\;\;\; \phi(x)\coloneqq\phi_x$$ such that
\begin{equation}\label{equation_intertwine_actions}
L_F(g)\cdot ( \phi_{s(g)}(v))= \phi_{t(g)} (L_E(g)\cdot  (v))
\end{equation}
for almost every $g\in \calG$ and every $v\in E$.

A $\calG$-map is \emph{bounded} if the map $$\calG^{(0)}\rightarrow \matR\,,\;\;\; x\mapsto ||\phi_x||_{\calL} $$ is essentially bounded, where $\| \cdot \|_{\calL}$ is the operator norm. The \emph{norm} of $\phi$ is 
$$||\phi||_{\infty}\coloneqq \textup{ess sup}\{\; ||\phi_x||_{\calL} \ | \ x \in \calG^{(0)} \} \,.$$

Two measurable coefficient $\calG$-modules $(E,L_E)$ and $(F,L_F)$ are \emph{isomorphic} if they are isomorphic as Banach spaces and there exists a $\calG$-map $\phi:\calG^{(0)}\rightarrow \Isom(E,F)$ taking values in the space of isometries between $E$ and $F$. Such a $\calG$-map is an \emph{isomorphism} of measurable coefficient $\calG$-modules.
\end{defn}

\begin{oss}
In the above definition, an isomorphism corresponds to the existence of a measurable family of isometries that intertwine the $\calG$-actions. 
This is equivalent to the data of both an isomorphism of Banach spaces $T:E\rightarrow F$ and a $\calG$-map $\psi:\calG^{(0)} \rightarrow \Isom(E)$ such that 
$$L_F(g) \cdot T(\psi_{s(g)}(v))=T (\psi_{t(g)}(L_E(g)\cdot  v)) \ .$$ 
We notice that $\psi$ can be interpreted as a similarity (Definition \ref{definition_homomorphism_homotopy_measured_groupoid}) between $L_E$ and $L_F$ conjugated by $T$.
This form could reveal misleading for the reader because of the apparent lack of symmetry between $E$ and $F$ that, in our opinion, becomes more clear in Definition \ref{definition_morphism_coefficient_modules}.
\end{oss}

\begin{oss}\label{remark_actions_identified}
Since Equation \eqref{equation_intertwine_actions} holds for almost every $g \in G$, this suggests that an action $L: \calG\rightarrow \Isom (E)$ on a measurable coefficient module $E$ can be perturbed on a null subset of $\calG$, still getting an isomorphic $\calG$-action on $E$. This shows once more how in the measurable setting it is natural, and also necessary, to identify maps that coincide almost everywhere. From now on we therefore identify isomorphic actions.
\end{oss}


\begin{oss}\label{remark_bundles}
In the context of groupoids, instead of taking a single Banach space as above, it is often useful to consider the more general notion of measurable $\calG$-bundle of Banach spaces. Roughly speaking, the idea of measurable bundles is to attach a Banach space to each unit of a measured groupoid $\calG$ in a measurable way. We refer the reader to Section \ref{section_bundles} for more details about such definition. A notion of $\calG$-action is also given in this more general context \cite[Definition 4.1.1]{delaroche:renault}, and it seems natural to ask whether it coincides with Definition \ref{definition_measurbale_G_module}. If we consider the constant bundle, namely the family $\{E_x\}_{x\in \calG^{(0)}}$, where $E_x=E$ for some fixed Banach space $E$, a $\calG$-action is measurable in the sense of Anantharaman-Delaroche and Renault \cite[Definition 4.1.1]{delaroche:renault} if the map 
$$\calG\rightarrow E\,,\;\;\; g\mapsto L(g) \sigma(s(g))$$ is $\nu$-measurable
for every $\mu$-measurable $\sigma:\calG^{(0)}\rightarrow E$. Here $\nu$ is the quasi $\calG$-invariant probability representing $C$, $t_\ast \nu=\mu$ and $\nu$-measurability (respectively $\mu$-measurability) refers to the completion of the measure $\nu$ (respectively $\mu$).
First of all, we notice that such condition is equivalent to require that the function 
$$\calG\rightarrow E\,,\;\;\;g\mapsto L(g) v$$
is $\nu$-measurable for every $v\in E$. 
In fact, the non-trivial implication follows from the fact that for each $\mu$-measurable $\sigma$ there exists a sequence $(\sigma_n)$ of functions of the form $\sum \mathbbm{1}_{A_i}\cdot  v_i$, where $A_i\subset X$ are Borel subsets and $v_i\in E$ such that 
$ \sigma_n(x)\rightarrow\sigma(x)$
 for almost every $x\in X$ \cite[Chapter II, Section 5.1]{fell:doran}.
Hence $L(g)\sigma(s(g))$ coincides almost everywhere with the limit of $\nu$-measurable functions $L(g)\sigma_n(s(g))$, thus it is also $\nu$-measurable. 

One can prove even more, namely that if $E$ is a separable measurable coefficient module, any measurable action $\calG\rightarrow \Isom(E)$ in the sense of Anantharaman-Delaroche and Renault coincides with a measurable one (Definition \ref{definition_measurbale_G_module}) except on a $\nu$-negligible set.
This fact, together with Remark \ref{remark_actions_identified}, shows that for separable Banach spaces our definition coincides with the one by Anantharaman-Delaroche and Renault.
\end{oss}

In the following example we describe how to pullback the action of a groupoid on a measurable coefficient through similarities. 

\begin{es}\label{example_coefficient_map_induced_by_homotopy}
Let $\calG,\calH$ be two measured groupoids, $f_0,f_1:\calG\rightarrow \calH$ two homomorphisms and let $h:\calG^{(0)}\rightarrow \calH$ be a similarity from $f_0$ to $f_1$ (see Definition \ref{definition_measured_groupoid}). 
Thanks to Example \ref{example_pull_back_via_groupoid_homomorphism}, a measurable coefficient $\calH$-module $(F,L)$ has also a double structure of measurable coefficient $\calG$-module with the pullback actions induced by $f_0$ and $f_1$, respectively.
The similarity $h$ defines a $\calG$-map between coefficient modules $$H:\calG^{(0)}\rightarrow \Isom(f_0^*F,f_1^*F)\,,\;\;\; H(x)\coloneqq H_x$$
where
$$H_x: F\rightarrow F\,,\;\;\; H_x(v)\coloneqq L(h(x)) \cdot v$$
for every $x\in \calG^{(0)}$. We claim that $H$ intertwines the $\calG$-actions. In fact, for almost every $g\in \calG$ we have 
\begin{align*}
H_{t(g)} (f_0^*L(g)\cdot v)&= L(h(t(g)) \cdot (L(f_0(g))\cdot v)\\
&=L(h(t(g)) f_0(g))\cdot v\\
&= L(f_1(g)h(s(g))\cdot v= f_1^*L(g) \cdot (H_{s(g)} ( v))\,.
\end{align*}
Moreover, for every $v\in E$, the measurability of $x\mapsto H_x(v)$ follows from the measurability of both $h$ and the $\calH$-action.
\end{es}

\section{Measurable bounded cohomology of measured groupoids}\label{section_Bounded cohomology of groupoids}

This section is divided into three parts. In the first part there is a short description of fiber Bochner spaces. We focus our attention on the particular cases of both $\mathrm{L}^1$ and $\Linf$. The importance of the latter one becomes evident in the second part of the section, where we introduce the \emph{measurable bounded cohomology} of a measured groupoid and we list its basic properties. The definition is inspired by Monod's approach  \cite{monod:libro} via the complex of weak$^*$ measurable essentially bounded functions. Using Hahn disintegration theorem \cite[Theorem 2.1]{Hahn}, that complex can be alternatively viewed as a complex of fiber $\Linf$-spaces over the units of the groupoid. It is worth noticing that when the groupoid is actually a locally compact group, one recovers its continuous bounded cohomology. The last part of the section is spent to prove that our construction is functorial, namely that similar groupoid homomorphisms induce the same map in cohomology. This result will be a fundamental ingredient to prove that measurable bounded cohomology is invariant under orbit equivalence and hence Theorem \ref{theorem_expo_orbit_equivalence}.

\subsection{Fiber $\mathrm{L}^1$ and $\Linf$ spaces}\label{section_bundles}
 In this section we are going to recall the definition of measurable bundle of Banach spaces. We will focus our attention mainly on the case of $\mathrm{L}^1$ and $\Linf$ bundles. For a more detailed discussion about those topics we refer the reader to either Fell and Doran \cite[Chapter II.4]{fell:doran} or Anantharaman-Delaroche and Renault \cite[Appendix A.3]{delaroche:renault}. 

Let $X$ be a standard Borel space with probability measure $\mu$. A \emph{bundle of Banach spaces} $\mathcal{E}=\{E_x\}_{x \in X}$ over $X$ is the assignment of a Banach space to every point of $X$. A \emph{section} of the bundle is a function 
$\sigma:X \rightarrow \bigsqcup_{x \in X} E_x$ such that $\sigma(x) \in E_x$ for every $x \in X$. 

\begin{defn}\label{def measurable structure}
Let $(X,\mu)$ be a standard Borel probability space and let $\mathcal{E}=\{E_x\}_{x \in X}$ be a bundle of Banach spaces over $X$. We say that a family of sections $\mathscr{M}$ is a $\mu$-\emph{measurable structure} for $\mathcal{E}$ if the following properties are satisfied:
\begin{itemize}
\item if $\sigma_1,\sigma_2 \in \mathscr{M}$, then $\sigma_1+\sigma_2 \in \mathscr{M}$,
\item if $\sigma\in \mathscr{M}$ and $\phi:X \rightarrow \mathbb{C}$ is a $\mu$-measurable function, then $\phi \cdot \sigma \in \mathscr{M}$,
\item the function $x \mapsto \lVert \sigma(x) \rVert_{E_x}$ is $\mu$-measurable, where $\lVert \ \cdot \ \rVert_{E_x}$ is the norm on $E_x$, 
\item if $\sigma_n \in \mathscr{M}$ is a sequence of sections converging $\mu$-almost everywhere to a section $\sigma$, then $\sigma \in \mathscr{M}$. 
\end{itemize}
All the above operations involving sections, such as their sum, must be intended pointwise. 

The pair $(\mathcal{E},\mathscr{M})$ is called \emph{measurable bundle of Banach spaces} over $X$ and the elements of $\mathscr{M}$ are called $\mu$-\emph{measurable sections}. 
We say that a measurable bundle of Banach spaces $(\mathcal{E},\mathscr{M})$ is \emph{separable} if there exists a countable family of $\mu$-measurable sections $(\sigma_n)_{n \in \mathbb{N}}$ such that the subset $\{\sigma_n(x) \ |  \ n\in \matN\}$ is dense in $E_x$ for $\mu$-almost every $x \in X$. 
\end{defn}

\begin{es} \label{esempio L1}
Let $(Y,\nu)$ and $(X,\mu)$ be two standard Borel probability spaces and consider $\pi:Y \rightarrow X$ a Borel measurable surjection such that $\pi_\ast \nu = \mu$. We fix a separable Banach space $E$. 

In virtue of Hahn disintegration theorem \cite[Theorem 2.1]{Hahn}, we know that there exists an assignment $x \mapsto \nu^x$ of measures on $Y$ such that 
$$\nu^x(Y \setminus \pi^{-1}(x))=0$$ 
and, for every Borel measurable map $f:Y \rightarrow E$, the function 
$$
x \mapsto \nu^x(\lVert f \rVert_E):= \int_Y \lVert f(y) \rVert_E d\nu^x(y) \ 
$$
is Borel measurable. Here $\lVert \ \cdot \  \rVert_E$ is the norm on $E$. 

We can consider the bundle of Banach spaces defined by
$$
\mathcal{E}:=\{ \mathrm{L}^1((Y,\nu^x),E)\}_{x \in X} \ ,
$$
where each $\mathrm{L}^1((Y,\nu^x),E)$ is the usual Bochner space of norm $\nu^x$-integrable functions. 
As noticed by Anantharaman-Delaroche and Renault \cite[Appendix A.3, Example (2)]{delaroche:renault}, the family of sections
$$
\mathscr{N}:=\{ x\mapsto \sigma(x):=f\,| \, f:Y\rightarrow E \ \textup{ Borel} \,,\, \nu^x(\lVert f \rVert_E) < \infty \ \textup{for $\mu$-a.e. $ x \in X$} \}  
$$
generates a measurable structure $\mathscr{M}$ for $\mathcal{E}$ which is separable, by the fact that $Y$ is standard and $E$ is separable. Notice that $\mathscr{M}$ is the smallest measurable structure for $\mathcal{E}$ containing the subset $\mathscr{N}$. 

We can define 
$$
\mathscr{L}^1(X,\mathcal{E}):=\left\{ \sigma\in \mathscr{M} \ | \ \int_X \nu^x(\lVert \sigma(x) \rVert_E) d\mu(x) < \infty \right\}
$$
endowed with the seminorm 
$$
\lVert \sigma \rVert_{\mathscr{L}^1(X,\mathcal{E})}:=\int_X \nu^x(\lVert \sigma(x) \rVert_E)d\mu(x) \ .
$$
If we quotient by the subset of zero normed sections, we obtain a normed space $\mathrm{L}^1(X,\mathcal{E})$ called \emph{fiber Bochner $\mathrm{L}^1$-space} over $X$ with coefficients in $\mathcal{E}$. 

If we consider $f \in \mathrm{L}^1(Y,E)$ and we define
$$
f^x(y):=\begin{cases*}
f(y)\ \ &\textup{if $ y \in \pi^{-1}(x)$}, \\
0 \ \ &\textup{otherwise}\,,
\end{cases*}
$$ for every $x\in X$, Hahn disintegration theorem \cite[Theorem 2.1]{Hahn} guarantees that 
$$ X\rightarrow \bigsqcup\limits_{x\in X} \mathrm{L}^1((Y,\nu^x),E)\,,\;\;\;  x\mapsto f^x$$ lies in $ \mathrm{L}^1(X,\mathcal{E})$.
With a slight abuse of notation we denote by $f$ also this section.
In fact, if $\lVert \ \cdot \ \rVert_{\mathrm{L}^1(X,\mathcal{E})} $
is the induced norm on $\mathrm{L}^1(X,\mathcal{E})$,
 it holds 
\begin{equation} \label{eq isometry L1}
\lVert f \rVert_{\mathrm{L}^1(Y,E)} = \int_Y \lVert f(y) \rVert_E d\nu(y) =\int_X\left( \int_Y \lVert f(y) \rVert_E d\nu^x(y) \right) d\mu(x)=\lVert f \rVert_{\mathrm{L}^1(X,\mathcal{E})} \,,
\end{equation}
and
we have an isometric isomorphism of Banach spaces
\begin{equation} \label{eq L1 isomorphism}
\Phi: \mathrm{L}^1(Y,E) \rightarrow \mathrm{L}^1(X,\mathcal{E}) \ ,
\;\;\;
f \mapsto ( x\mapsto f^x) \ .
\end{equation}
The fact that $x \mapsto f^x$ is a measurable section follows directly by Equation \eqref{eq isometry L1} and by the fact that $\mathscr{M}$ is generated by $\mathscr{N}$. 
\end{es}

Our goal is to pass to dual spaces in order to establish an isomorphism for $\Linf$-spaces similar to the one of Equation \eqref{eq L1 isomorphism}. 

\begin{defn}\label{def dual bundle}
Let $(X,\mu)$ be a standard Borel probability space and let $(\mathcal{E},\mathscr{M})$ be a measurable bundle of Banach spaces. Suppose that $\mathscr{M}$ is separable. The \emph{dual measurable bundle} $(\mathcal{E}^\ast, \mathscr{M}^\ast)$ is obtained by taking the family of dual Banach spaces $\mathcal{E}^\ast=\{E_x^\ast\}_{x \in X}$ and by considering as $\mathscr{M}^\ast$ the collection of sections $\lambda:X \rightarrow \bigsqcup E^\ast_x$ such that
$$
x \mapsto \langle \lambda(x) | \sigma(x) \rangle 
$$
is Borel measurable for every $\sigma \in \mathscr{M}$. 
\end{defn}

By \cite[Lemma A.3.7]{delaroche:renault} the separability of $\mathscr{M}$ guarantees that $\mathscr{M}^\ast$ is a well-defined measurable structure on $\mathcal{E}^\ast$. Additionally, it is sufficient to check the defining condition only on a subset $\mathscr{N}$ generating the structure $\mathscr{M}$.

\begin{es} \label{esempio Linf}
We consider two standard Borel probability spaces $(Y,\nu),(X,\mu)$  and let $\pi:Y \rightarrow X$ be a measurable surjection such that $\pi_\ast \nu = \mu$. Let $E$ be a separable Banach space and let $\{ \nu^x \}_{x \in X}$ be the disintegration of $\nu$ along $\pi$ obtained by \cite[Theorem 2.1]{Hahn}. 

Given the bundle
$$
\mathcal{E}:=\{\mathrm{L}^1((Y,\nu^x),E) \}_{x \in X}
$$
endowed with the separable measurable structure $\mathscr{M}$ of Example \ref{esempio L1}, we can construct the associated dual measurable bundle. More precisely, we define
$$
\mathcal{E}^\ast:=\{\Linfw((Y,\nu^x),E^\ast) \}_{x \in X} \ , 
$$
where each $\Linfw((Y,\nu^x),E^\ast)$ is the Bochner space of weak$^\ast$ measurable essentially bounded functions. We endow $\mathcal{E}^\ast$ with the dual measurable structure $\mathscr{M}^\ast$ given by Definition \ref{def dual bundle}.

We define 
$$
\mathscr{L}^\infty(X,\mathcal{E}^\ast):=\{ \lambda \in \mathscr{M}^\ast \ | \ x \rightarrow \lVert \lambda(x) \rVert_{\infty} \ \textup{is essentially bounded} \}, 
$$
equipped with the seminorm 
$$
\lVert \lambda \rVert_{\mathscr{L}^\infty(X,\mathcal{E}^\ast)}:=\mathrm{ess \ sup}_{x \in X} \lVert \lambda(x) \rVert_{\infty} \ .
$$
By quotienting by the subspace of zero normed sections, we obtain a normed space $\Linf(X,\mathcal{E}^\ast)$ called \emph{fiber Bochner $\mathrm{L}^\infty$-space} over $X$ with coefficients in $\mathcal{E}^\ast$. 

By \cite[Proposition A.3.9]{delaroche:renault} there exists a canonical isomorphism 
\begin{equation}\label{equation_duality}
(\mathrm{L}^1(X,\mathcal{E}))^\ast \cong \Linf(X,\mathcal{E}^\ast) \ ,
\end{equation}
where the space on the left-hand side is the dual of the fiber Bochner $\mathrm{L}^1$-space over $X$. Since it is well-known that 
$$
(\mathrm{L}^1(Y, E))^\ast \cong \Linfw(Y,E^\ast) \ ,
$$
by dualizing the isometric isomorphism of Equation \eqref{eq L1 isomorphism}, we obtain another isometric isomorphism as follows
\begin{equation} \label{eq Linf isomorphism}
\Phi^\ast: \Linfw(Y,E^\ast) \rightarrow \Linf(X,\mathcal{E}^\ast) \ ,
\;\;\;
\lambda \mapsto (x \mapsto \lambda^x) \ ,
\end{equation}
where $\lambda^x$ is defined analogously as in Example \ref{esempio L1}. 
\end{es}

\subsection{Measurable bounded cohomology.}\label{section_Measurable bounded cohomology}
Fix a measured groupoid $(\calG,\nu)$ and a measurable coefficient $\calG$-module $(E,L)$, the latter considered with the Borel structure induced by the weak$^*$ topology.
%
%
In particular we have a fixed predual $E^{\flat}$.
Consider the fiber product
$$\calG^{(\bullet)}=\calG*\ldots*\calG\coloneqq \{(g_1,\ldots,g_{\bullet})\in \calG^{\bullet} \, | \, t(g_i)=t(g_j)\;\; \forall i,j\;\}$$ endowed with the 
measurable structure inherited by $\calG^{\bullet}$. If $(\nu^x)_{x\in \calG^{(0)}}$ is the quasi-invariant $t$-disintegration of $\nu$, we denote by
$\nu^{(\bullet)}_x\coloneqq \nu^x \otimes\ldots \otimes \nu^x$ the product measure supported on $\left(\calG^{(\bullet)}\right)^x\coloneqq (t^{\bullet})^{-1}(x)$, where $t^{\bullet}$
is the fiber target map $$t^\bullet:\calG^{(\bullet)} \rightarrow \calG^{(0)}\,,\;\;\;  t^\bullet(g_1,\ldots,g_\bullet)=t(g_1)\,.$$
In this setting, the measure $\nu^{(\bullet)}$ on $\calG^{(\bullet)}$ is the convolution defined by Equation \eqref{equation_convolution_measures} between the measure $\mu$ on $\calG^{(0)}$ and the system $(\nu^{(\bullet)}_x)_{x\in \calG^{(0)}}$. 
Define the Banach space
$$\Linfw(\calG^{(\bullet)},E)\coloneqq \left\{ \lambda:\calG^{(\bullet)}\rightarrow E\, | \ \text{$\lambda$ is weak$^\ast$ measurable},\, ||\lambda||_{\infty}<+\infty \right\}\big/ \sim  $$
where two maps coinciding $\nu^{(\bullet)}$-almost everywhere are identified and $\| \lambda \|_\infty$ denotes the essential supremum on $\calG^{(\bullet)}$. 
By Example \ref{example_groupoids_action2}, $\calG$ acts in a natural way on each fiber product $\calG^{(\bullet)}$ by taking
$$
\calG \ast \calG^{(\bullet)} \rightarrow \calG^{(\bullet)} \ ,
$$
$$
(g,(g_1,\ldots,g_\bullet)) \mapsto (gg_1,\ldots,gg_\bullet) \ .
$$
Following Anantharaman-Delaroche and Renault \cite[Lemma 4.1.12]{delaroche:renault}, such an action, combined with the $\calG$-action on $E$, allows to define the notion of $\calG$-invariance. A function $\lambda \in \Linfw(\calG^{(\bullet)},E)$ is $\calG$-\emph{invariant} if 
\begin{equation}\label{eq_g_invariance_0}
L(g)\lambda(g^{-1}g_1,\ldots,g^{-1}g_\bullet)=\lambda(g_1,\ldots,g_\bullet)
\end{equation}
for $\nu^{(\bullet+1)}$-almost every $(g,g_1,\ldots,g_\bullet) \in \calG^{(\bullet+1)}$. 

Since the fiber target map is a Borel surjection satisfying $t^{(\bullet)}_\ast(\nu^{(\bullet)})=\mu$, the disintegration isomorphism of Equation \eqref{eq Linf isomorphism} applies, and we obtain
\begin{equation}\label{equation_disintegration_isomorphism}
\Linfw( \calG^{(\bullet)},E) \cong \Linf( \calG^{(0)}, \mathcal{E}) \ ,
\end{equation}
where $\mathcal{E}:=\{ \Linfw (( \calG^{(\bullet)},\nu^{(\bullet)}_x),E ) \}_{x \in \calG^{(0)}}$ is the measurable bundle of Example \ref{esempio Linf}. Recall that the explicit isomorphism of Equation \eqref{equation_disintegration_isomorphism} is obtained by considering a Borel map $\lambda:\calG^{(\bullet)} \rightarrow E$ and sending it to the section $x \mapsto \lambda^x$, where $\lambda^x$ is the extension by zero of the restriction of $\lambda$ to the fiber $(\calG^{(\bullet)})^x$. In this context, the $\calG$-invariance of Equation \eqref{eq_g_invariance_0} can be rewritten in terms of sections of the bundle $\mathcal{E}$. A section $(\lambda^x)_{x \in \calG^{(0)}}$ is $\calG$-\emph{invariant} if for almost every $g \in \calG$ it satisfies 

\begin{equation}\label{equation_g_invariance}
g\lambda^{s(g)}=\lambda^{t(g)} \ ,
\end{equation} 
where 
\begin{equation}\label{equation_action_section}
g\lambda^{s(g)}(g_1,\ldots,g_\bullet):=\begin{cases} L(g)\lambda(g^{-1}g_1,\ldots,g^{-1}g_\bullet)& \text{if } t(g_i)=t(g)\\
0 & \text{otherwise} \, .\end{cases}
\end{equation}
We denote by $\Linfw(\calG^{(\bullet)},E)^{\calG}$ the subspace of $\calG$-invariant functions, endowed with its natural structure of Banach subspace. 

The standard homogeneous coboundary operator 
$$\delta^{\bullet}:\Linfw(\calG^{(\bullet+1)},E)^\calG \rightarrow \Linfw(\calG^{(\bullet+2)},E)^\calG$$
 is the map defined as
\begin{gather*}
\delta^{\bullet} \lambda (g_1,\ldots, g_{\bullet+2} )\coloneqq \sum \limits_{i=1}^{\bullet+2} (-1)^{i-1}  \lambda (g_1,\ldots,  \widehat{g_i} ,\ldots, g_{\bullet+2} )\,.
\end{gather*}
so that the pair 
$$\left(\Linfw(\calG^{(\bullet+1)},E)^{\calG},\delta^{\bullet}\right)$$
forms a cochain complex of Banach spaces.

\begin{defn}\label{definition_measurable_bounded_cohomology}
The \emph{measurable bounded cohomology} of a measured groupoid $\calG$ with coefficients into a measurable coefficient $\calG$-module $E$ is
$$\Hmb^k(\calG,E)=\Hm^k(\Linfw(\calG^{(\bullet+1)},E)^{\calG})\, ,$$
endowed with the seminormed structure induced by the norm on $\Linfw(\calG^{(\bullet+1)},E)^{\calG}$. 
\end{defn}

\begin{oss}\label{remark_bounded_cohomology_group}
By \cite[Proposition 1.1.3]{monod:libro}, a measurable isometric action of a locally compact group $G\rightarrow \Isom(E^{\flat})$ in the sense of Definition \ref{definition_measurbale_G_module} can be promoted to a continuous action by local compactness.
In this case, $\Linfw(G^{\bullet},E)$ coincides with the space of $E$-valued weak$^*$ measurable functions on $G^{\bullet}$. Hence, by \cite[Proposition 7.5.1]{monod:libro}, we have canonical isometric isomorphisms
$$\Hmb^{\bullet}(G,E)\cong \Hcb^{\bullet}(G,E)\,,$$
where the left-hand side is the measurable cohomology of $G$, viewed as a groupoid, and the right-hand side denotes the usual continuous bounded cohomology. In this way we see that measurable bounded cohomology of groupoids boils down to the usual continuous bounded cohomology in the case of locally compact groups. 
\end{oss}

\subsection{Functoriality}\label{section_Functoriality}
In this section we show that similar homomorphisms between groupoids induces the same map in measurable bounded cohomology.

Fix $(\calG,\nu)$ and $(\calH,\tau)$ two measured groupoids, a measurable coefficient $\calG$-module $E$ and a measurable coefficient $\calH$-module $F$.
Consider a pair $(f,\phi)$, where $f:\calG\rightarrow \calH$ is a homomorphism and $\phi:\calG^{(0)} \rightarrow \calL(f^*F,E)$ is a $\calG$-map as in Definition \ref{definition_morphism_coefficient_modules}.
As observed in Remark \ref{remark_homomorphism}, we can modify $f$ on a null measure subset in order to get an algebraic homomorphism on an inessential contraction of $\calG$. Notice that a different choice of the inessential contraction of $\calG$ does not affect our reasoning, since we are going to consider class of functions coinciding almost everywhere on $\calG^{(\bullet)}$. By what we have said so far, without loss of generality, we can suppose that $f$ sends 
almost every $\calG$-unit into a $\calH$-unit. In particular, $f$ induces maps on fiber products, and this fact allows to define maps
\begin{gather}\label{equation_functoriality_of_map}
\Linfw(f,\phi)^{\bullet}:\Linfw (\calH^{(\bullet)},F)\rightarrow \Linfw (\calG^{(\bullet)},E)\\
\notag \Linfw(f,\phi)^{\bullet}(\lambda)(g_1,\ldots,g_{\bullet})\coloneqq \phi_{x}(\lambda(f(g_1),\ldots,f(g_{\bullet})))
\end{gather}
for almost every $x\in X$ and $(g_1,\ldots,g_{\bullet})\in (t^{\bullet})^{-1}(x)\subset \calG^{(\bullet)}$.

We need to check that such map is well-defined. Take $\widetilde{\lambda}$ another representative differing from $\lambda$ by a null set $Z \subset \calH^{(\bullet)}$. Since $f_*\nu$ is absolutely continuous with respect to $\tau$, its preimage through the induced map $f^{(\bullet)}:\calG^{(\bullet)} \rightarrow \calH^{(\bullet)}$ has still measure zero in $\calG^{(\bullet)}$. Such preimage is the locus where $\Linfw(f,\phi)(\lambda)$ and $\Linfw(f,\phi)(\widetilde{\lambda})$ may differ. This shows that the images of two different representatives of a class in $\Linfw (\calH^{(\bullet)},F)$ coincide in $\Linfw (\calG^{(\bullet)},E)$.
Secondly,  we need to prove that $\Linfw(f,\phi)^{\bullet}(\lambda)$ is essentially bounded, but this follows from the uniform boundedness of $\phi$.
Moreover, since the following diagram commutes
\begin{center}
\begin{tikzcd}
\Linfw (\calH^{(\bullet)},F)\arrow{rr}{\Linfw(f,\id_F)^{\bullet}} \arrow[swap]{rd}{\Linfw(f,\phi)^{\bullet}}&& \Linfw (\calG^{(\bullet)},f^*F) \arrow{ld}{\Linfw(\id_{\calG},\phi)^{\bullet}} \\
& \Linfw (\calG^{(\bullet)},E)&
\end{tikzcd}
\end{center}
 it follows that the map $\Linfw(f,\phi)^{\bullet}$ has norm at most equal to $||\phi||_{\infty}$.
We denote by $\Hmb^{\bullet}(f,\phi)$ the map induced by $\Linfw(f,\phi)^{\bullet}$ at the cohomological level. When the $\calG$-map $\phi$ will be $\id_E$ for almost every $\calG$-unit, we will sometimes drop it in the notation, and we will simply write $\Hmb^{\bullet}(f)$.

%

The next result shows that the measurable bounded cohomology is invariant by taking an inessential contraction.
\begin{lemma}\label{lemma_isomorphism_cohomology_inessential_contraction}
Let $\calG$ be a measured groupoid and let $E$ be a measurable coefficient $\calG$-module. If $U\subset X$ is a Borel set such that $\calG_{|U}$ is an inessential contraction of $\calG$, the inclusion map $i:\calG_{|U}\hookrightarrow \calG$ induces isometric isomorphisms
$$\mathrm{H}_{\mathrm{mb}}^{\bullet}(\calG,E)\cong \mathrm{H}_{\mathrm{mb}}^{\bullet} (\calG_{|U},E)\,.$$
\end{lemma}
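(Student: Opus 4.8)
The plan is to exhibit the claimed isomorphism already at the level of invariant cochains, and then read it off in cohomology. The inclusion $i\colon\calG_{|U}\hookrightarrow\calG$ is a homomorphism of measured groupoids (it sends the $\calG_{|U}$-units $U$ into the $\calG$-units $X$, and $i_*\nu'$ is equivalent to $\nu$ since $\calG_{|U}$ is conull), so pairing it with the identity coefficient map $\phi=\id_E$ produces, via Equation \eqref{equation_functoriality_of_map}, the restriction cochain map
\[
\Linfw(i,\id_E)^{\bullet}\colon \Linfw(\calG^{(\bullet)},E)\longrightarrow \Linfw(\calG_{|U}^{(\bullet)},E),\qquad \lambda\longmapsto \lambda|_{\calG_{|U}^{(\bullet)}}.
\]
Since both coboundary operators are given by the same alternating-sum formula, this commutes with $\delta^\bullet$, and it suffices to prove that it restricts to an \emph{isometric isomorphism} of the invariant subcomplexes $\Linfw(\calG^{(\bullet)},E)^{\calG}\to\Linfw(\calG_{|U}^{(\bullet)},E)^{\calG_{|U}}$.

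The decisive point is that conullity propagates to all fiber products: I claim $\calG_{|U}^{(\bullet)}$ is $\nu^{(\bullet)}$-conull in $\calG^{(\bullet)}$ for every $\bullet$. First, $\{g : t(g)\notin U\}\subseteq\calG\setminus\calG_{|U}$ is $\nu$-null, and since $\mu=t_*\nu$ this gives $\mu(X\setminus U)=0$. Second, applying the $t$-disintegration \eqref{equation_isom_disintegration} to the $\nu$-null set $\{g : s(g)\notin U\}$ yields
\[
0=\int_{\calG^{(0)}}\nu^x\bigl(\{g\in\calG^x : s(g)\notin U\}\bigr)\,d\mu(x),
\]
so that $\nu^x(\{g\in\calG^x : s(g)\notin U\})=0$ for $\mu$-almost every $x$. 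As $\nu^{(\bullet)}$ is the convolution of $\mu$ with the product systems $\nu^{(\bullet)}_x=\nu^x\otimes\cdots\otimes\nu^x$, a union bound over the $\bullet$ coordinates shows that for $\mu$-almost every $x\in U$ the fiber $(\calG^{(\bullet)})^x$ coincides with $(\calG_{|U}^{(\bullet)})^x$ up to a $\nu^{(\bullet)}_x$-null set; integrating over $x\in U$ proves the claim.

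Granting this, $\Linfw(i,\id_E)^{\bullet}$ is restriction to a conull subset, hence it preserves the essential supremum and is injective; surjectivity follows by extending any class in $\Linfw(\calG_{|U}^{(\bullet)},E)$ by zero, which produces a weak$^*$-measurable, essentially bounded function of the same norm. Conullity is then used twice for invariance: a $\calG$-invariant cochain restricts to a $\calG_{|U}$-invariant one, because Equation \eqref{eq_g_invariance_0} merely specialises to tuples lying in $\calG_{|U}^{(\bullet+1)}$; conversely, the zero-extension of a $\calG_{|U}$-invariant cochain satisfies \eqref{eq_g_invariance_0}, since by the conullity just established almost every $(g,g_1,\dots,g_\bullet)\in\calG^{(\bullet+1)}$ has all entries in $\calG_{|U}$, where the identity already holds. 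The only genuine work is the conullity propagation of the second paragraph; once that is in place the remaining verifications are formal, because in the measurable setting every object lives up to null sets, and the null locus is invisible to both the essential supremum and the almost-everywhere invariance condition. Thus $\Linfw(i,\id_E)^{\bullet}$ is an isometric isomorphism of invariant cochain complexes and induces the asserted isometric isomorphism $\Hmb^{\bullet}(\calG,E)\cong\Hmb^{\bullet}(\calG_{|U},E)$.
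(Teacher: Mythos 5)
Your proposal is correct and follows essentially the same route as the paper: the paper's proof consists precisely of observing that the cochain map $\Linfw(i,\id_E)^{\bullet}$ induced by the inclusion (with the identity coefficient map) gives an isometric identification of the invariant complexes. Your write-up simply supplies the verifications the paper leaves implicit, namely the conullity of $\calG_{|U}^{(\bullet)}$ in $\calG^{(\bullet)}$ via disintegration, the inverse by zero-extension, and the two-way compatibility with the almost-everywhere invariance condition.
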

\begin{proof}
If $\id_E:U\rightarrow \Isom(E)$ is the $\calG_{|U}$-map defined to be the identity on $E$ for every $x\in U$, then the induced cochain map
$$\Linfw(i,\id_E)^{\bullet}: \Linfw(\calG^{(\bullet)}, E)\rightarrow \Linfw((\calG_{|U})^{(\bullet)}, E)\,$$
gives an isometric identification between $\Linfw((\calG_{|U})^{(\bullet)}, E)$ and $\Linfw(\calG^{(\bullet)}, E)$.
\end{proof}

We are now ready to prove the main result of this section, which shows that similar homomorphisms induce the same maps in measurable bounded cohomology.

\begin{prop}\label{proposition_functoriality}
Let $\calG,\calH$ be measured groupoids and consider a measurable coefficient $\calH$-module $F$.
Let $f_0,f_1:\calG\rightarrow \calH$ be two similar homomorphisms connected by a similarity $h$ and denote by $H$ the $\calG$-map induced by $h$.
 Then 
 $$\Hmb^{\bullet}(f_1,\id_F): \Hmb^{\bullet}(\calH,F)\rightarrow \Hmb^{\bullet}(\calG,f_1^* F)$$
 and 
 $$\Hmb^{\bullet}(f_0,H):\Hmb^{\bullet}(\calH,F)\rightarrow \Hmb^{\bullet}(\calG, f_0^*F)$$
coincide. In particular
 $$\Hmb^{\bullet}(f_0,H)=\Hmb^{\bullet}(\id_{\calG},H)\circ \Hmb^{\bullet}(f_0,\id_{F})$$ and the first map on the right-hand side is an isomorphism.
\end{prop}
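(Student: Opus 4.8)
The plan is to show, at the level of cochains, that the two maps $\Linfw(f_1,\id_F)^{\bullet}$ and $\Linfw(f_0,H)^{\bullet}$ are chain homotopic; this forces them to agree on cohomology. Note first that, by Example \ref{example_coefficient_map_induced_by_homotopy}, $H$ realizes the identification $f_0^\ast F\cong f_1^\ast F$, so both maps take values in the same complex $\Linfw(\calG^{(\bullet+1)},f_1^\ast F)^{\calG}$. Granting the homotopy, the factorization statement is then immediate: the commutative triangle of this section (taken with $\phi=H$) gives $\Linfw(f_0,H)^{\bullet}=\Linfw(\id_{\calG},H)^{\bullet}\circ\Linfw(f_0,\id_F)^{\bullet}$ already at the cochain level, and since $H$ takes values in $\Isom(f_0^\ast F,f_1^\ast F)$ the map $\Linfw(\id_{\calG},H)^{\bullet}$ is an isometric isomorphism of complexes, whence $\Hmb^{\bullet}(\id_{\calG},H)$ is an isomorphism.

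To build the homotopy I would first put the second map into a symmetric form. Writing $x=t(g_i)$ and $y_i=s(g_i)$, by definition $\Linfw(f_0,H)^{\bullet}\lambda(g_0,\dots,g_n)=L(h(x))\,\lambda(f_0(g_0),\dots,f_0(g_n))$. Applying the $\calH$-invariance of $\lambda$ with the element $h(x)$ — legitimate since $s(h(x))=f_0(x)=t(f_0(g_i))$ — this equals $\lambda(h(x)f_0(g_0),\dots,h(x)f_0(g_n))$, and the homotopy relation $f_1(g)=h(t(g))f_0(g)h(s(g))^{-1}$ rewrites $h(x)f_0(g_i)$ as $f_1(g_i)h(y_i)$. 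Setting $a_i\coloneqq f_1(g_i)h(y_i)$ and $b_i\coloneqq f_1(g_i)$, both of target $f_1(x)$, the two maps become $\lambda(a_0,\dots,a_n)$ and $\lambda(b_0,\dots,b_n)$ respectively. I then introduce the prism operator $k^n\colon\Linfw(\calH^{(n+1)},F)^{\calH}\to\Linfw(\calG^{(n)},f_1^\ast F)^{\calG}$ by
\[
k^n\lambda(g_0,\dots,g_{n-1})\coloneqq\sum_{j=0}^{n-1}(-1)^{j}\,\lambda\big(a_0,\dots,a_j,b_j,\dots,b_{n-1}\big),
\]
where the $a_i,b_i$ are formed from $g_0,\dots,g_{n-1}$ as above; all entries share the target $f_1(x)$, so the evaluation is meaningful.

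Three checks then remain. Well-definedness on equivalence classes and essential boundedness follow exactly as for $\Linfw(f,\phi)^{\bullet}$ earlier in this section: the absolute continuity $f_{0\ast}\nu,f_{1\ast}\nu\ll\tau$ together with the measurability of $h$ ensures that null sets pull back to null sets, and $\lVert k^n\lambda\rVert_\infty\le n\,\lVert\lambda\rVert_\infty$. The $\calG$-invariance of $k^n\lambda$ is a direct computation: replacing each $g_i$ by $g^{-1}g_i$ leaves $y_i=s(g^{-1}g_i)$ unchanged and replaces $a_i,b_i$ by $f_1(g)^{-1}a_i,f_1(g)^{-1}b_i$, so the $\calH$-invariance of $\lambda$ pulls out a global factor $L(f_1(g))^{-1}$, which is precisely the inverse of the $f_1^\ast F$-action of $g$. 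The substantial step — the one I expect to be the real obstacle — is the prism identity $\delta^{n-1}k^n+k^{n+1}\delta^n=\Linfw(f_1,\id_F)^n-\Linfw(f_0,H)^n$: this is the standard telescoping computation for a simplicial homotopy, in which the extreme terms reproduce $\lambda(b_0,\dots,b_n)$ and $\lambda(a_0,\dots,a_n)$ while the intermediate faces cancel in pairs, but carrying it out demands careful sign bookkeeping and continual attention to the fact that $f_0,f_1,h$ are only defined almost everywhere. Once the identity is established, passing to $\calG$-invariants and then to cohomology yields $\Hmb^{\bullet}(f_1,\id_F)=\Hmb^{\bullet}(f_0,H)$, which completes the proof.
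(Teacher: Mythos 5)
Your proof is correct and follows essentially the same route as the paper's: both use the homotopy relation together with $\calH$-invariance to bring the two cochain maps into prism-comparable form, build the standard simplicial prism operator as an explicit chain homotopy (the paper cites Monod's Lemma 8.7.2 for the telescoping identity, exactly the computation you defer to), and obtain the factorization and the isomorphism from the commutative triangle with $\phi=H$. Indeed, your operator $k^n$ is the paper's homotopy $s^{\bullet+1}$ with the outer factor $H_x=L(h(x))$ absorbed into the arguments via invariance, so the two homotopies coincide.
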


\begin{oss}\label{remark_funtoriality}
In the above statement we considered a unique $\calH$-module $F$ and then we endowed it with the $\calG$-actions induced by the pullback along $f_1$ and $f_0$. Here the $\calG$-maps involved are the identity for $f_1^*F$ and the identity composed with $H$ for $f_0^*F$.
It is worth noticing that it would be possible to consider a more general setting, as the map defined by Equation \eqref{equation_functoriality_of_map} allows to choose different modules and generic $\calG$-maps. For instance, one could start with two different modules $E,F$ endowed with actions by $\calG$ and $\calH$ respectively and then taking a $\calG$-map $\phi:\calG^{(0)}\rightarrow \Isom (f_1^*F,E)$. In this case the maps involved in cohomology would be
$$\Hmb^{\bullet}(f_1,\phi):\Hmb^{\bullet}(\calH,F)\rightarrow \Hmb^{\bullet}(\calG,E)$$ and 
$$\Hmb^{\bullet}(f_0,\phi\circ  H):\Hmb^{\bullet}(\calH,F)\rightarrow \Hmb^{\bullet}(\calG,E)$$
and the same argument described above shows that they coincide.

However, in order to simplify the exposition and since in the sequel we will never use such a general case, we prefer to state the result in a simpler case. 
\end{oss}

\begin{proof}
Fix an element $\lambda\in \Linfw(\calH^{(\bullet)},F)^{\calH}$, let $(g_1,\ldots,g_{\bullet})\in \calG^{(\bullet)}$ and denote by $x=t(g_i)$ and $x_i=s(g_i)$. 
Exploiting the equality $$h(t(g))f_0(g)h(s(g))^{-1}=f_1(g)$$
given by Definition \ref{definition_homomorphism_homotopy_measured_groupoid},  we have
\begin{align*}
&\;\lambda(f_1(g_1),\ldots,f_1(g_{\bullet}))\\
=&\;\lambda(h(x) f_0(g_1)h(x_1)^{-1},\ldots,h(x) f_0(g_{\bullet})h(x_{\bullet})^{-1})\\
=&\;L(h(x)) L (h(x))^{-1} \lambda(h(x) f_0(g_1)h(x_1)^{-1},\ldots,h(x) f_0(g_{\bullet})h(x_{\bullet})^{-1})\\
=&\;L(h(x)) \lambda(f_0(g_1)h(x_1)^{-1},\ldots, f_0(g_{\bullet})h(x_{\bullet})^{-1}) \ ,
\end{align*}
where the last equality follows by the $\calH$-invariance of $\lambda$. 
We now claim that $\Linfw(f_0,H)^{\bullet}$ and the map defined by the last term of the above equation induce the same map in cohomology. To prove the claim we will construct an explicit chain homotopy. To this end, for any $i=1,\ldots,\bullet$, define the maps
\begin{gather*}
s_i^{\bullet+1}: \Linfw(\calH^{(\bullet+1)},F)\rightarrow \Linfw(\calG^{(\bullet)},F)\,,\\
s_i^{\bullet+1}(\lambda)(g_1,\ldots,g_{\bullet})\coloneqq H_x (\lambda(f_0(g_1),\ldots,f_0(g_{i}),f_0(g_i)h(x_i)^{-1},\ldots,f_0(g_{\bullet})h(x_{\bullet})^{-1}))
\end{gather*}
and 
\begin{gather*}
s^{\bullet+1}: \Linfw(\calH^{(\bullet+1)},F)\rightarrow \Linfw(\calG^{(\bullet)},F)\,,\\
s^{\bullet+1}(\lambda)=\sum\limits_{i=1}^{\bullet}
(-1)^{i-1} s_i^{\bullet+1} (\lambda)\,.
\end{gather*}
Following an analogous computation as in \cite[Lemma 8.7.2]{monod:libro} we get 
\begin{align*}
s^{\bullet+1}\circ \delta_{\calH}^{\bullet-1} ( \lambda)    (g_1,\ldots,g_{\bullet})=&-\delta_{\calG}^{\bullet-2} \circ s^{\bullet}( \lambda)    (g_1,\ldots,g_{\bullet})\\
&+H_x (\lambda(f_0(g_1),\ldots,f_0(g_{\bullet})))\\
& -H_x(\lambda(f_0(g_1)h(x_1)^{-1},\ldots, f_0(g_{\bullet})h(x_{\bullet})^{-1}))
\end{align*}
and the thesis follows.

By definition, we have that 
$$ \Linfw(f_0,H)^{\bullet}(\lambda)(g_1,\ldots,g_{\bullet})=
H_{x}(\lambda(f_0(g_1),\ldots,f_0(g_{\bullet}))) \,,$$
hence we have a commutative diagram at the cohomological level
\begin{center}
\begin{tikzcd} 
\Hmb^{\bullet}(\calH,F) \arrow{rr}{\Hmb^{\bullet}(f_0,\id_{F})} \arrow[swap]{rd}{\Hmb^{\bullet}(f_0,H)}&&\Hmb^{\bullet}(\calG,f_0^* F) \arrow{ld}{\Hmb^{\bullet}(\id_{\calG},H)}\\
&\Hmb^{\bullet}(\calG, f_1^* F)& 
\end{tikzcd}
\end{center}
 where the right arrow is an isomorphism by Example \ref{example_coefficient_map_induced_by_homotopy}.
 \end{proof}
 
 As an immediate consequence we obtain the invariance of measurable bounded cohomology under similarity (and hence under isomorphism).
 \begin{cor}\label{corollary_invariance_similarity}
Let $\calG,\calH$ be measured groupoids and consider a measurable coefficient $\calH$-module $F$. If $f:\calG\rightarrow \calH$ realizes a similarity then 
$$\Hmb^\bullet(f,\id_{F}): \Hmb^{\bullet}(\calH,F)\rightarrow \Hmb^{\bullet}(\calG,f^* F)$$
are isomorphisms.
Moreover, if $f$ is an isomorphism, the above isomorphisms are isometric.
 \end{cor}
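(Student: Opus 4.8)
The plan is to reduce the statement entirely to Proposition~\ref{proposition_functoriality} together with the contravariant functoriality of the cochain construction, so that the genuine analytic content (the chain homotopy) is already in place; the only real care needed is the bookkeeping with the pulled-back coefficient modules $f^*F$, $k^*f^*F$, and so on, and this is where I expect the only friction to lie.

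First I would record the composition law. For composable homomorphisms $f_1,f_2$ and identity coefficient maps, formula~\eqref{equation_functoriality_of_map} gives $\Linfw(f_2\circ f_1,\id)^{\bullet}=\Linfw(f_1,\id)^{\bullet}\circ\Linfw(f_2,\id)^{\bullet}$, since both send $\lambda$ to $(g_1,\dots,g_{\bullet})\mapsto\lambda(f_2(f_1(g_1)),\dots,f_2(f_1(g_{\bullet})))$; passing to cohomology yields $\Hmb^{\bullet}(f_2\circ f_1,\id)=\Hmb^{\bullet}(f_1,\id)\circ\Hmb^{\bullet}(f_2,\id)$. Next I would extract from Proposition~\ref{proposition_functoriality} the special case that matters: if $\phi$ is an endomorphism of a measured groupoid $A$ with $\phi\simeq\id_A$, then for every measurable coefficient $A$-module $M$ the map $\Hmb^{\bullet}(\phi,\id_M)$ is an isomorphism. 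Indeed, taking $f_0=\phi$, $f_1=\id_A$ and writing $H_\phi$ for the coefficient map of the connecting homotopy, the ``In particular'' clause reads $\id_{\Hmb^{\bullet}(A,M)}=\Hmb^{\bullet}(\id_A,H_\phi)\circ\Hmb^{\bullet}(\phi,\id_M)$ with $\Hmb^{\bullet}(\id_A,H_\phi)$ invertible, so $\Hmb^{\bullet}(\phi,\id_M)$ is invertible.

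Now let $k:\calH\to\calG$ be a homotopy inverse of $f$, so that $f\circ k\simeq\id_{\calH}$ and $k\circ f\simeq\id_{\calG}$, and set $\alpha=\Hmb^{\bullet}(f,\id_F)$, $\beta=\Hmb^{\bullet}(k,\id_{f^*F})$, $\delta=\Hmb^{\bullet}(f,\id_{k^*f^*F})$. By the composition law, $\beta\circ\alpha=\Hmb^{\bullet}(f\circ k,\id_F)$ and $\delta\circ\beta=\Hmb^{\bullet}(k\circ f,\id_{f^*F})$, and both are isomorphisms by the previous paragraph (applied to $\phi=f\circ k$, $M=F$, and to $\phi=k\circ f$, $M=f^*F$). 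From $\beta\circ\alpha$ invertible one reads off that $\alpha$ is injective and $\beta$ is surjective; from $\delta\circ\beta$ invertible one reads off that $\beta$ is injective. Hence $\beta$ is a bijection, and since all these maps are norm non-increasing (each is induced by a cochain map of norm at most $1$, by the bound following~\eqref{equation_functoriality_of_map}), $\beta$ is a bounded isomorphism; therefore $\alpha=\beta^{-1}\circ(\beta\circ\alpha)$ is a bounded isomorphism, which is the first assertion.

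For the isometry statement I would assume $f$ is a genuine isomorphism, with $k=f^{-1}$ restricting to a two-sided inverse on inessential contractions, so that $f\circ k=\id_{\calH}$ and $k\circ f=\id_{\calG}$ strictly there. By Lemma~\ref{lemma_isomorphism_cohomology_inessential_contraction} I may pass to these contractions at no isometric cost, after which no homotopy-induced coefficient identification is needed: $(f\circ k)^*F=F$ on the nose, so $\Hmb^{\bullet}(k,\id_{f^*F})\circ\Hmb^{\bullet}(f,\id_F)=\Hmb^{\bullet}(\id_{\calH},\id_F)=\id$ and symmetrically. Both $\Linfw(f,\id_F)^{\bullet}$ and $\Linfw(k,\id)^{\bullet}$ have norm at most $\|\id_F\|_{\infty}=1$ by the same bound, so the induced maps $\alpha$ and $\alpha^{-1}$ are both norm non-increasing; a bijection that is norm non-increasing together with its inverse is an isometry, giving the isometric claim. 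The only point demanding attention throughout is keeping the four coefficient modules $F$, $f^*F$, $k^*f^*F$, $f^*k^*f^*F$ straight so that every composition typechecks; once this is arranged, the argument is the familiar two-out-of-three retract argument and carries no further difficulty.
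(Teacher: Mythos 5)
Your proof is correct and takes essentially the paper's own route: the paper presents this corollary as an immediate consequence of Proposition \ref{proposition_functoriality}, and your argument is exactly that deduction carried out in full — the composition law for the cochain maps of Equation \eqref{equation_functoriality_of_map}, the special case of an endomorphism homotopic to $\id$, the two-out-of-six argument for $\alpha$, $\beta$, $\delta$, and the reduction via Lemma \ref{lemma_isomorphism_cohomology_inessential_contraction} to strict inverses for the isometric claim. I see no gaps; the only cosmetic imprecision is the justification that $\beta^{-1}$ is bounded (it follows from $\beta^{-1}=\alpha\circ(\beta\circ\alpha)^{-1}$ rather than from norm non-increase of $\beta$ alone), which is immaterial since the statement only asserts isomorphisms, with isometry in the strict isomorphism case.
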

 
 
We are going to describe the behavior of measurable bounded cohomology for a \emph{subgroupoid}, that is the restriction of a groupoid to a Borel subset of the unit space. A first instance of invariance is showed in Lemma \ref{lemma_isomorphism_cohomology_inessential_contraction} in the case of full-measure subgroupoids.
For $t$-discrete groupoids, taking a Borel subset of the unit space whose saturation is of full measure, we can exploit Proposition \ref{proposition_functoriality} to deduce the following 
 \begin{prop}\label{proposition_discrete}
 Let $\calG$ be a $t$-discrete groupoid over $X$. Consider a measurable coefficient $\calG$-module $E$ and $U\subset X$ a Borel subset of positive measure such that $\calG U$ is of full measure. Then the inclusion 
 $i:\calG_{|U}\hookrightarrow \calG$
 induces isomorphisms
 $$ \Hmb^{\bullet} (\calG,E) \cong \Hmb^{\bullet} (\calG_{|U},E)\,.$$
 \end{prop}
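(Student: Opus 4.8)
The plan is to factor the inclusion through the inessential contraction to the saturation and then invoke the similarity invariance coming from Proposition \ref{proposition_functoriality}. Write $V \coloneqq \calG U$ for the saturation of $U$, which is conull by hypothesis. Since $\calG_{|V}$ is then an inessential contraction of $\calG$, Lemma \ref{lemma_isomorphism_cohomology_inessential_contraction} already yields an isometric isomorphism $\Hmb^\bullet(\calG, E) \cong \Hmb^\bullet(\calG_{|V}, E)$. It therefore suffices to exhibit a similarity between $\calG_{|V}$ and $\calG_{|U}$ whose homotopy equivalence is realized by the inclusion $i \colon \calG_{|U}\hookrightarrow \calG_{|V}$, so that Corollary \ref{corollary_invariance_similarity} produces the claimed isomorphism.

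First I would use $t$-discreteness to choose a measurable transversal selection $p \colon V \to \calG$ with $t(p(x)) = x$ and $s(p(x)) \in U$ for every $x \in V$, arranged so that $p(x)$ is the unit at $x$ whenever $x \in U$. Such a Borel selection exists: by the definition of the saturation each $x \in V$ is the target of some element with source in $U$, the fibers $t^{-1}(x)$ are countable, and a Lusin--Novikov type uniformization (available in the standard Borel setting of Remark \ref{remark_feldman_moore_theorem}) provides a Borel choice; on $U$ we simply take the unit. With $p$ fixed I would define the retraction
$$
f \colon \calG_{|V} \to \calG_{|U}\,, \qquad f(g) \coloneqq p(t(g))^{-1}\, g\, p(s(g))\,,
$$
check that the composition is defined and that both endpoints lie in $U$ (being sources of the selection), and verify that $f$ is a homomorphism: whenever $g_1 g_2$ is defined one has $p(s(g_1)) = p(t(g_2))$, so the middle factor $p(s(g_1))\,p(t(g_2))^{-1}$ collapses to the unit at $t(g_2)$ and $f(g_1)f(g_2) = f(g_1 g_2)$. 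Absolute continuity of $f_\ast \nu$ with respect to the natural measure on $\calG_{|U}$ holds because $f$ is built measurably from $p$ and the groupoid operations, so $f$ is a homomorphism of measured groupoids in the sense of Definition \ref{definition_homomorphism_homotopy_measured_groupoid}.

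Next I would verify that $f$ and $i$ are mutually inverse up to homotopy. The map $h \colon V \to \calG_{|V}$, $h(x) \coloneqq p(x)^{-1}$, satisfies $s(h(x)) = t(p(x)) = x$ and $t(h(x)) = s(p(x)) = (i\circ f)(x)$, and the identity
$$
h(t(g))\, g\, h(s(g))^{-1} = p(t(g))^{-1}\, g\, p(s(g)) = (i\circ f)(g)
$$
shows that $h$ is a homotopy from $\id_{\calG_{|V}}$ to $i\circ f$. In the other direction, since $p$ restricts to the unit map on $U$, for every $g \in \calG_{|U}$ both $p(t(g))$ and $p(s(g))$ are units, whence $f(i(g)) = g$ and $f\circ i = \id_{\calG_{|U}}$ on the nose. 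Thus $i$ is a homotopy equivalence with homotopy inverse $f$, and $\calG_{|U}$ is similar to $\calG_{|V}$.

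Finally, since $i$ is a homotopy equivalence, Corollary \ref{corollary_invariance_similarity} gives an isomorphism $\Hmb^\bullet(i,\id_E)\colon \Hmb^\bullet(\calG_{|V}, E) \to \Hmb^\bullet(\calG_{|U}, i^\ast E)$, where $i^\ast E$ is just the restriction of the $\calG$-module $E$ to $\calG_{|U}$; the coefficients are matched through the homotopy $h$ exactly as in Example \ref{example_coefficient_map_induced_by_homotopy} and Proposition \ref{proposition_functoriality}. Composing with the inessential-contraction isomorphism of the first step yields the desired $\Hmb^\bullet(\calG, E) \cong \Hmb^\bullet(\calG_{|U}, E)$ induced by $i$. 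The step I expect to be the main obstacle is the construction of the measurable selection $p$ together with the check that $f$ respects the measure classes, since the cohomology is defined only up to null sets; the purely algebraic homomorphism and homotopy identities are then immediate once $p$ is in hand.
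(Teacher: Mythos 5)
Your argument is, up to inverting the selection (your $h(x)=p(x)^{-1}$ versus the paper's $h$), exactly the paper's proof: reduce to the saturation via Lemma \ref{lemma_isomorphism_cohomology_inessential_contraction}, choose a Borel selection with one endpoint in $U$, define the retraction $f(g)=p(t(g))^{-1}\,g\,p(s(g))$, check $f\circ i=\id_{\calG_{|U}}$ and $i\circ f\simeq \id$ via the selection itself, and conclude with Corollary \ref{corollary_invariance_similarity}. The only real divergence is the selection theorem: the paper uses the Von Neumann selection theorem, which produces a merely $\mu$-measurable selector and therefore forces an a.e.\ correction and a further inessential contraction to a conull set $V_0$, whereas your Lusin--Novikov uniformization exploits the countability of the $t$-fibers directly and yields an honest Borel selection everywhere on $V$; this is a legitimate and slightly cleaner variant.

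The one genuine hole --- which, to your credit, you flag as the expected obstacle rather than claim to have settled --- is the absolute-continuity check. ``Because $f$ is built measurably from $p$ and the groupoid operations'' is not a reason: Borel maps routinely push measure classes onto singular ones, and for a general measured groupoid a conjugation map of this shape need not respect the measure class at all. This is precisely where $t$-discreteness must enter a second time, and your write-up never uses it for this step. For a $t$-discrete groupoid with $\nu(W)=\int_X |W\cap \calG^x|\,d\mu(x)$, a set $W\subset\calG$ is $\nu$-null if and only if $t(W)$ is $\mu$-null (equivalently $s(W)$, by symmetry of the class). So if $A\subset\calG_{|U}$ is null, then $t(A)$ is $\mu$-null; since $t(f(g))=s(p(t(g)))$, every $g\in f^{-1}(A)$ has $t(g)$ in the saturation $\calG(t(A))=t\left(s^{-1}(t(A))\right)$, and this saturation is $\mu$-null because $t^{-1}(t(A))$ is $\nu$-null, hence (by symmetry) so is $s^{-1}(t(A))$, hence so is its target projection. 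Therefore $f^{-1}(A)\subset t^{-1}(\calG(t(A)))$ is $\nu$-null, which is the required absolute continuity. This two-line use of discreteness is exactly the content of the paper's remark that ``the target and the source of null measure subsets of discrete groupoids have null measure in the unit space,'' and it is what your proof needs to make explicit for $f$ to be a homomorphism of measured groupoids in the sense of Definition \ref{definition_homomorphism_homotopy_measured_groupoid}.
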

 \begin{proof}
The main goal is to show that $\calG$ and $\calG_{|U}$ are similar measured groupoids \cite[Lemma 6.6]{ramsay}. We report a sketch of Ramsay's proof for sake of completeness.

 Let $V=\calG U$.  We first claim that we can assume $V=X$. In fact, 
since $V$ has full measure, $\calG_{|V}$ is an inessential contraction of $\calG$. Hence, by Lemma \ref{lemma_isomorphism_cohomology_inessential_contraction} we have isomorphisms
$$
\Hmb^\bullet(\calG,E) \cong \Hmb^\bullet(\calG_{|V},E)\,.
$$
Assuming $\calG U=X$, it follows that $s(\calG^U)=X$, where 
$\calG^U\coloneqq \{ g\in \calG\,|\,  t(g)\in U\}$.
Up to discarding null-measure subsets and applying again Lemma \ref{lemma_isomorphism_cohomology_inessential_contraction}, by the Selection Theorem (see \cite[Corollary 18.10]{kechris} for the version needed here) there exists a Borel map $\theta:X\rightarrow\calG^U$ with 
$s(\theta(x))=x$ for every $x\in X$. Notice that the condition $s(h(x))=x$, together with the fact that $h$ has range in $\calG^U$, implies that  $h(t(g)) g h(s(g))^{-1}$ lies in $\calG_{|U}$ for every $g\in \calG$.

The discreteness of $\calG$ ensures that the map
$f:\calG\rightarrow \calG_{|U}$ defined as $f(g)\coloneqq h(t(g)) g h(s(g))^{-1}$ is actually a homomorphism (see also \cite[Lemma 2.2]{kida}). Indeed, the absolute continuity required by Definition \ref{definition_homomorphism_homotopy_measured_groupoid} follows by the fact that the target and the source of null measure subsets of discrete groupoids have null measure in the unit space.

If we consider
 the inclusion $i:\calG_{|U}\rightarrow \calG$, then $f\circ i=\id_{\calG_{|U}}$ and, by definition, $i\circ  f$ is similar to $\id_{\calG}$ via $h$. The thesis follows by Corollary \ref{corollary_invariance_similarity}.
 \end{proof}
 
 In the ergodic case the above result can be extended to every
 Borel subset of positive measure of the unit space. 
 \begin{cor}\label{corollary_isomorphism_restriction_ergodic_groupoid}
 Let $\calG$ be an ergodic $t$-discrete groupoid and $E$ a measurable coefficient $\calG$-module. Then for every Borel subset $U\subset X$ of positive measure we have isomorphisms
 $$ \Hmb^{\bullet} (\calG,E) \cong \Hmb^{\bullet} (\calG_{|U},E)\,.$$
 \end{cor}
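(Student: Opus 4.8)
The plan is to derive this directly from Proposition~\ref{proposition_discrete} by upgrading, via ergodicity, the positive-measure hypothesis on $U$ to the full-measure hypothesis on the saturation $\calG U$ that proposition requires. The only genuinely new input is the observation that ergodicity forces $\calG U$ to be conull; once this is in hand, the corollary is immediate.

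First I would observe that $U$ is always contained in its own saturation $\calG U=\{t(g)\,|\,s(g)\in U\}$. Indeed, for every $x\in U$ the unit of $\calG$ at $x$, namely the element $g$ with $s(g)=t(g)=x$, satisfies $s(g)=x\in U$, so that $x=t(g)\in \calG U$. Hence $U\subseteq \calG U$, and since $U$ has positive measure, so does $\calG U$. By the definition of ergodicity (Definition~\ref{definition_ergodic_groupoid}) the saturation $\calG U$ is either null or conull; being of positive measure, it must be conull, i.e.\ of full measure.

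At this point $U$ is a Borel subset of the unit space whose saturation is of full measure, which is precisely the hypothesis of Proposition~\ref{proposition_discrete}. Applying that proposition to the inclusion $i:\calG_{|U}\hookrightarrow \calG$ yields the desired isomorphisms
$$
\Hmb^{\bullet}(\calG,E)\cong \Hmb^{\bullet}(\calG_{|U},E)\,.
$$
I do not expect any real obstacle here: the statement is a direct corollary, and the single delicate point---that $U$ lies in its own saturation---is immediate from the presence of units in the groupoid, so the entire argument reduces to invoking ergodicity and then quoting the preceding proposition.
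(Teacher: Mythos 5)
Your proposal is correct and follows essentially the same route as the paper: ergodicity upgrades the positive measure of $U$ to fullness of the saturation $\calG U$, and then Proposition~\ref{proposition_discrete} applies. The only addition is your explicit verification that $U\subseteq \calG U$ (via the units), a small but legitimate detail the paper leaves implicit when concluding that $\calG U$ has positive measure.
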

\begin{proof}
By ergodicity, for any Borel subset $U \subset X$ of positive measure we have that $\calG U$ is of full measure and the statement follows by Proposition \ref{proposition_discrete}.
\end{proof}

In view of Example \ref{example_orbit_equivalence} we get the following
\begin{cor}\label{corollary_orbit_equivalence}
Let $\Gamma,\Lambda$ be countable groups and consider $\Gamma \curvearrowright X$ and $\Lambda\curvearrowright Y$ two essentially free measure preserving actions on standard Borel probability spaces.  Let $T:X\rightarrow Y$ be a measurable isomorphism that defines an orbit equivalence between $\Gamma \curvearrowright X$ and $\Lambda\curvearrowright Y$. Then there exist isometric isomorphisms
$$ \Hmb^{\bullet} (\Lambda\ltimes Y,E) \cong \Hmb^{\bullet} (\Gamma\ltimes X,E)$$
for every measurable coefficient $(\Lambda\ltimes Y)$-module $E$.

If both actions are also ergodic, then a weak orbit equivalence induces isomorphisms
$$ \Hmb^{\bullet} (\Lambda\ltimes Y,E) \cong \Hmb^{\bullet} (\Gamma\ltimes X,E)\,.$$
\end{cor}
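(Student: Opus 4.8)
The plan is to deduce both statements directly from the functoriality machinery of Section \ref{section_Functoriality}, feeding in the groupoid-theoretic reformulation of (weak) orbit equivalence from Example \ref{example_orbit_equivalence}. No new analysis is needed: the work has already been done in the preceding results, so the corollary is essentially a matter of assembling the right diagram and tracking the coefficient module.

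First I would treat the orbit equivalence case. By Example \ref{example_orbit_equivalence}, the measurable isomorphism $T$ together with its associated cocycle $\sigma_T$ produces a groupoid homomorphism $f:\Gamma\ltimes X\rightarrow \Lambda\ltimes Y$, $f(\gamma,x)=(\sigma_T(\gamma,x),T(x))$, which is an isomorphism in the sense of Definition \ref{definition_homomorphism_homotopy_measured_groupoid}: its inverse, built from $T^{-1}$ and the essential freeness of the $\Gamma$-action, exists after passing to suitable inessential contractions. Applying Corollary \ref{corollary_invariance_similarity} to $f$ with the coefficient map $\id_E$ yields the \emph{isometric} isomorphism $\Hmb^\bullet(f,\id_E):\Hmb^\bullet(\Lambda\ltimes Y,E)\rightarrow\Hmb^\bullet(\Gamma\ltimes X,f^*E)$, where $f^*E$ carries the pulled-back action of Example \ref{example_pull_back_via_groupoid_homomorphism}. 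Writing $E$ for $f^*E$ on the right-hand side, as in the statement, gives the first claim.

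For the weak orbit equivalence case, Example \ref{example_orbit_equivalence} instead yields only a weak isomorphism, namely a genuine groupoid isomorphism $f:(\Gamma\ltimes X)_{|U}\rightarrow(\Lambda\ltimes Y)_{|V}$ between restrictions to positive-measure Borel sets $U\subset X$ and $V\subset Y$. Since $\Gamma,\Lambda$ are countable the semidirect groupoids are $t$-discrete, and since the actions are ergodic the groupoids are ergodic in the sense of Definition \ref{definition_ergodic_groupoid}. I would therefore invoke Corollary \ref{corollary_isomorphism_restriction_ergodic_groupoid} twice to obtain
$$
\Hmb^\bullet(\Gamma\ltimes X,E)\cong\Hmb^\bullet((\Gamma\ltimes X)_{|U},E),\qquad
\Hmb^\bullet(\Lambda\ltimes Y,E)\cong\Hmb^\bullet((\Lambda\ltimes Y)_{|V},E),
$$
and then bridge the two restricted cohomologies by Corollary \ref{corollary_invariance_similarity} applied to the isomorphism $f$. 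Composing the three maps delivers $\Hmb^\bullet(\Lambda\ltimes Y,E)\cong\Hmb^\bullet(\Gamma\ltimes X,E)$.

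The only genuine subtleties are bookkeeping rather than conceptual, and I expect the main point to watch to be the coefficient module rather than any hard obstacle. A single $(\Lambda\ltimes Y)$-module $E$ must be pulled back along $f$ (and along the restriction inclusions) so that both sides carry compatible $\calG$- and $\calH$-actions, exactly as in Example \ref{example_pull_back_via_groupoid_homomorphism}. The second point worth flagging is the asymmetry between the two conclusions: the weak case yields only an isomorphism, not an isometric one. This is because Corollary \ref{corollary_isomorphism_restriction_ergodic_groupoid} rests on the \emph{similarity} of $\calG$ with $\calG_{|U}$ established in Proposition \ref{proposition_discrete}, and Corollary \ref{corollary_invariance_similarity} guarantees isometry only for actual isomorphisms, not for general homotopy equivalences. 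This fully accounts for the loss of the isometry statement when one passes from orbit equivalence to weak orbit equivalence.
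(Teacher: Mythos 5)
Your proposal is correct and follows essentially the same route as the paper: the paper's proof also reduces the first claim to the isomorphism of inessential contractions from Example \ref{example_orbit_equivalence} (hence isometric $\Linfw$-complexes, which is what Corollary \ref{corollary_invariance_similarity} packages), and obtains the weak case by combining the first part with Corollary \ref{corollary_isomorphism_restriction_ergodic_groupoid}. Your added bookkeeping on the pulled-back coefficient module and on why the isometry is lost in the weak case is consistent with the paper and fills in details the paper leaves implicit.
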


\begin{proof}
Recall that by Example \ref{example_orbit_equivalence} the semidirect groupoids $\Lambda\ltimes Y$ and $\Gamma\ltimes X$ have isomorphic inessential contractions. 
As a consequence the $\Linfw$-complexes are isometric and so are their cohomology.
The second part follows by the first one and by Corollary \ref{corollary_isomorphism_restriction_ergodic_groupoid}.
\end{proof}



As a consequence of Corollary \ref{corollary_orbit_equivalence}, we also can compare the measurable bounded cohomology of two \emph{measure equivalent groups}. 
Given two countable groups $\Gamma,\Lambda$, a \emph{coupling} between them is a standard Borel (infinite) measure space $(\Omega,m)$ with a measure preserving $(\Gamma\times \Lambda)$-action such that both the actions of $\Gamma$ and $\Lambda$ admit finite measure fundamental domains $X_{\Gamma}$ and $X_{\Lambda}$, respectively. 
In this case we say that $\Gamma$ and $\Lambda$ are \emph{measure equivalent}.

\begin{cor}\label{corollary_measure_equivalence}
Let $\Gamma,\Lambda$ be measure equivalent countable groups. Then there exist standard Borel probability spaces $X$ and $Y$ and essentially free ergodic actions $\Gamma\curvearrowright X$, $\Lambda\curvearrowright Y$ inducing isometric isomorphisms
$$ \Hmb^{\bullet} (\Lambda\ltimes Y,E) \cong \Hmb^{\bullet} (\Gamma\ltimes X,E)$$
for every measurable coefficient $(\Lambda\ltimes Y)$-module $E$.
\end{cor}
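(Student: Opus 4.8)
The plan is to deduce this from Corollary \ref{corollary_orbit_equivalence} by manufacturing, out of a measure equivalence coupling, two essentially free ergodic probability measure preserving actions of $\Gamma$ and of $\Lambda$ that are weakly orbit equivalent. This is precisely the measured group theory dictionary between measure equivalence and stable (weak) orbit equivalence recorded in Furman's survey \cite{furman}, so the heart of the argument is to run that construction and then feed the output into the ergodic, weak orbit equivalence part of Corollary \ref{corollary_orbit_equivalence}.

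First I would build the actions. Let $(\Omega,m)$ be a coupling with finite-measure fundamental domains $X_\Gamma$ for the $\Gamma$-action and $X_\Lambda$ for the $\Lambda$-action. Since the $\Gamma$- and $\Lambda$-actions commute, the $\Gamma$-action descends to the quotient $\Omega/\Lambda$, which is identified as a measure space with $X_\Lambda$; after normalizing $m|_{X_\Lambda}$ to a probability measure this produces a p.m.p.\ action $\Gamma\curvearrowright X:=X_\Lambda$, and symmetrically $\Lambda\curvearrowright Y:=X_\Gamma$. Essential freeness and ergodicity are not automatic, but both can be arranged without leaving the measure equivalence class: replacing $\Omega$ by $\Omega\times Z$ for an auxiliary essentially free ergodic p.m.p.\ $(\Gamma\times\Lambda)$-space $Z$ forces the induced actions to be essentially free, and one may take the coupling itself to be ergodic (as in \cite{furman}), which makes the orbit equivalence relation $\calR$ of the $(\Gamma\times\Lambda)$-action on $\Omega$ ergodic. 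I would record these reductions and henceforth assume $\Gamma\curvearrowright X$ and $\Lambda\curvearrowright Y$ essentially free and ergodic with $\calR$ ergodic.

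The crux is identifying the orbital relations. Two points of $\Omega/\Lambda\cong X$ lie in the same induced $\Gamma$-orbit exactly when their lifts lie in a common $\calR$-class, so the orbit relation of $\Gamma\curvearrowright X$ is the restriction $\calR_{|X_\Lambda}$, and symmetrically that of $\Lambda\curvearrowright Y$ is $\calR_{|X_\Gamma}$. Thus both relations are restrictions of the single ergodic relation $\calR$ to positive-measure subsets of $\Omega$, which is the statement that $\Gamma\curvearrowright X$ and $\Lambda\curvearrowright Y$ are weakly orbit equivalent in the sense of Example \ref{example_orbit_equivalence}. The ergodic, weak orbit equivalence part of Corollary \ref{corollary_orbit_equivalence} then yields
$$
\Hmb^{\bullet}(\Lambda\ltimes Y,E)\cong \Hmb^{\bullet}(\Gamma\ltimes X,E)
$$
for every measurable coefficient $(\Lambda\ltimes Y)$-module $E$. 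That these are \emph{isometric} follows from the structure of the restriction isomorphism of Corollary \ref{corollary_isomorphism_restriction_ergodic_groupoid}: it is realized by the restriction map $\Hmb^{\bullet}(i,\id)$ together with its inverse $\Hmb^{\bullet}(f,\id)$ induced by the similarity $f$ of Proposition \ref{proposition_discrete}, and both have operator norm at most $1$ by the bound $\|\Linfw(f,\phi)^{\bullet}\|\le\|\phi\|_{\infty}$ with $\phi=\id$; two mutually inverse contractions are isometries, and composing with the isometric strict orbit equivalence isomorphism keeps the whole map isometric.

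The main obstacle I expect is the bookkeeping in the third step: one must verify that the identification of $\Omega/\Lambda$ with $X_\Lambda$ and the descent of the $\Gamma$-action are compatible with the Borel and measured-groupoid structures up to null sets, so that the partial isomorphism relating $\calR_{|X_\Lambda}$ and $\calR_{|X_\Gamma}$ is genuinely the weak isomorphism of $\mathrm{II}_1$-relations required by Corollary \ref{corollary_orbit_equivalence}. Securing essential freeness and ergodicity simultaneously, and correctly normalizing the finite fundamental-domain measures, is the other delicate point, and it is there that I would lean on the standard reductions of \cite{furman} rather than reprove them.
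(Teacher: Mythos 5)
Your proposal is correct and its skeleton is exactly the paper's: the paper's entire proof consists of citing \cite[Theorem 2.5]{furman} for the existence of weakly orbit equivalent, essentially free, ergodic p.m.p.\ actions $\Gamma\curvearrowright X$, $\Lambda\curvearrowright Y$, and then invoking the second (ergodic/weak) part of Corollary \ref{corollary_orbit_equivalence}. Where you differ is only in presentation and in one substantive addition. On presentation: you re-derive Furman's theorem (induced actions on the fundamental domains $X_\Lambda\cong\Omega/\Lambda$ and $X_\Gamma\cong\Omega/\Gamma$, freeness via an auxiliary free coupling $\Omega\times Z$, ergodicity of the coupling, identification of both orbit relations with restrictions of the ergodic relation $\calR$ of $(\Gamma\times\Lambda)\curvearrowright\Omega$) rather than citing it; since you end up deferring the delicate steps (ergodic decomposition of couplings, construction of the partial isomorphism matching pieces of $X_\Lambda$ and $X_\Gamma$ inside $\calR$-classes) back to \cite{furman}, this buys nothing over the paper's citation, though it is a correct sketch. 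The substantive addition is your isometry argument, and it is worth keeping: the ingredient the paper cites, namely the second part of Corollary \ref{corollary_orbit_equivalence}, asserts only \emph{isomorphisms}, and Remark \ref{remark_monod_shalom} even cautions that for weak orbit equivalence ``a priori nothing can be said about the norms,'' so the paper's two-line proof does not literally justify the word ``isometric'' in the statement. Your observation closes this gap: the restriction isomorphism of Proposition \ref{proposition_discrete} (hence of Corollary \ref{corollary_isomorphism_restriction_ergodic_groupoid}) is realized by $\Hmb^{\bullet}(i,\id)$ and $\Hmb^{\bullet}(f,\cdot)$, both induced by cochain maps of norm at most one, which are mutually inverse up to the coefficient identification $H$ coming from the homotopy; since $H$ acts by isometries of $E$, the standard fact that mutually inverse contractions are isometries applies, and composing with the isometric isomorphism of Corollary \ref{corollary_invariance_similarity} for the (measure-normalized) weak isomorphism keeps the composite isometric.
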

\begin{proof}
By \cite[Theorem 2.5]{furman}, there exist standard probability spaces $X$ and $Y$ and essentially free ergodic actions $\Gamma\curvearrowright X$, $\Lambda\curvearrowright Y$ which are weakly orbit equivalent. Hence the thesis follows by the second part of Corollary \ref{corollary_orbit_equivalence}. 
\end{proof}

\begin{oss}\label{remark_ergodic_relation_weak_invariant}
By putting together Proposition \ref{proposition_functoriality} and Proposition \ref{proposition_discrete}, one has that weakly isomorphic $\mathrm{II}_1$-equivalence relations have the same measurable bounded cohomology, where two equivalence relations are \emph{weakly isomorphic} if the corresponding groupoids are weakly isomorphic in the sense of Definition \ref{definition_homomorphism_homotopy_measured_groupoid}.
Hence, the measurable bounded cohomology is an invariant for weakly isomorphic $\mathrm{II}_1$-relations. 
\end{oss}

\section{The exponential law}\label{section_The_exponential_law}
The purpose of this section is to prove Theorem \ref{theorem_expo}, where we  compare the measurable bounded cohomology of a semidirect groupoid with the continuous bounded cohomology of the group with twisted coefficients.

Consider a locally compact second countable group $G$ and a standard Borel $G$-space $X$ with a quasi invariant measure $\mu$. 
Denote by $\calG\coloneqq G\ltimes X$ the semidirect groupoid associated to the action and let $E$ be a measurable coefficient $\calG$-module with a $\calG$-action $L$. Fix a predual $E^{\flat}$ with action $L^{\flat}$ so that 
$L$ is the contragradient to $L^{\flat}$.
Consider now the Banach space $\Linfw(X,E)$ of essentially bounded $E$-valued measurable functions on $X$ (identified up to null sets) endowed with the $G$-action 
\begin{equation}\label{equation_coefficient_G_module}
G\rightarrow \Isom (\Linfw(X,E))\,,\;\;\; g\cdot\lambda(x)\coloneqq L(g,g^{-1}x)\lambda (g^{-1} x)\,.
\end{equation}
Since the $G$-action on $\Lone(X,E^{\flat})$ defined as
$$g\cdot\lambda(x)\coloneqq L^{\flat}(g,g^{-1}x)\lambda (g^{-1} x) \frac{d g^{-1}\mu}{d \mu }(x)$$
is measurable and $G$ is locally compact, it is also continuous by \cite[Proposition 1.1.3]{monod:libro}. 
It follows that the dual space $\Linfw(X,E)=\Lone(X,E^{\flat})^*$ has a natural structure of coefficient $G$-module in the sense of Monod.

We are now ready to prove the main result of this section.

\begin{rec_thm}[\ref{theorem_expo}]
Let $G$ be a locally compact second countable group, let $X$ be a Lebesgue $G$-space and denote by $\calG=G\ltimes X$ the associated measured semidirect groupoid.
If $E$ is a measurable coefficient $\calG$-module we have canonical isometric isomorphisms
\begin{equation}
\Hmb^{\bullet}(\calG,E)\cong \Hcb^{\bullet}(G,\Linfw(X,E))
\end{equation}
where the right-hand side denotes the continuous bounded cohomology of $G$ with coefficient in $\Linfw(X,E)$.
\end{rec_thm}
\begin{proof}
We recall that the measurable bounded cohomology of 
$\calG$ is computed by the complex 
\begin{equation}\label{equation_complex_1}
\left(\Linfw(\calG^{(\bullet+1)},E)^{\calG},\delta^{\bullet}\right)
\end{equation}
where 
\begin{align}\label{measured_isomorphism_product}
\calG^{(\bullet)}&= \{ ((g_1,x_1),\ldots,(g_{\bullet},x_{\bullet})) \in \calG^{\bullet} \, | \, t(g_i,x_i)=t(g_j,x_j) \; \forall i,j\}\\
&= \{ ((g_1,x_1),\ldots,(g_{\bullet},x_{\bullet})) \in \calG^{\bullet} \, | \, g_ix_i=g_jx_j \; \forall i,j\} \nonumber \\
&=\{ ((g_1,g_1^{-1}x),\ldots,(g_{\bullet},g_{\bullet}^{-1} x)) \, | \, g_i\in G \,,\, x\in X\}\cong G^{\bullet}\times X \nonumber \,,
\end{align}
and the isomorphism holds as measure spaces. 
 
By definition, an invariant function in $\Linfw(\calG^{(\bullet)} ,E)^{\calG}$ satisfies
\begin{gather*}
\notag L(g,x)\lambda((g^{-1}g_1,g_1^{-1}gx),\ldots,(g^{-1}g_{\bullet},g_\bullet^{-1}gx))= \lambda ((g_1,g_1^{-1}gx),\ldots,(g_{\bullet},g_\bullet^{-1}gx))
\end{gather*}
for almost every $g,g_1,\ldots,g_{\bullet}$ in $G$ and almost every $x\in X$.

On the other hand, if we endow $\Linfw(X,E)$ with the structure of coefficient $G$-module given by Equation \eqref{equation_coefficient_G_module}, by \cite[Proposition 7.5.1]{monod:libro} the continuous bounded cohomology of $G$ with coefficients in $\Linfw(X,E)$ is computed via the $G$-invariant subcomplex
\begin{equation}\label{equation_complex_2}
\left(\Linfw(G^{\bullet+1},\Linfw(X,E))^{G},\delta^{\bullet}\right)\,,
\end{equation}
where the $G$-action is given by 
\begin{align*}
g \cdot \lambda (g_1,\ldots,g_{\bullet+1})(x)&\coloneqq
L(g,g^{-1}x)\lambda (g^{-1}g_1,\ldots,g^{-1}g_{\bullet+1})(g^{-1}x)\,.
\end{align*}

We claim that the complexes of Equation \eqref{equation_complex_1} and Equation \eqref{equation_complex_2} are isomorphic and the cochain complex isomorphisms are realized by the following cochain maps:
\begin{gather*}
\alpha^{\bullet}: \Linfw(\calG^{(\bullet)},E)^{\calG}\rightarrow \Linfw(G^{\bullet},\Linfw(X,E))^{G}\,,\\ \alpha^{\bullet}(\lambda)(g_1,\ldots,g_{\bullet})(x)\coloneqq \lambda((g_1,g_1^{-1}x),\ldots,(g_\bullet,g_\bullet^{-1}x))
\end{gather*}
and 
\begin{gather*}
\beta^{\bullet}:\Linfw(G^{\bullet},\Linfw(X,E))^{G} \rightarrow \Linfw(\calG^{(\bullet)},E)^{\calG}\,,\\  \beta ^{\bullet}(\lambda)((g_1,g_1^{-1}x),\ldots,(g_\bullet,g_\bullet^{-1}x))\coloneqq \lambda(g_1,\ldots,g_{\bullet})(x)\,.
\end{gather*}
We start checking that those maps are well-defined. Notice first that the function spaces 
$$
\Linfw(\calG^\bullet,E) \cong \Linfw(G^\bullet,\Linfw(X,E))
$$
are isomorphic via $\alpha$ and $\beta$ thanks to Equation \eqref{measured_isomorphism_product} and \cite[Corollary 2.3.3]{monod:libro}. We have to verify that those maps preserve invariants. 
To see that $\alpha^{\bullet}\lambda$ is $G$-invariant, take $g\in G$. We have that  
\begin{align*}
g \cdot (\alpha^{\bullet} \lambda)(g_1,\ldots,g_{\bullet})(x)&= L(g,g^{-1}x)\alpha^{\bullet}\lambda(g^{-1}g_1,\ldots,g^{-1}g_{\bullet})(g^{-1}x)  \\
&= L(g,g^{-1}x)\lambda((g^{-1}g_1,g_1^{-1}x),\ldots,(g^{-1}g_{\bullet},g^{-1}_\bullet x))\\
&= L(g,y)\lambda((g^{-1}g_1,g_1^{-1}gy),\ldots,(g^{-1}g_{\bullet},g^{-1}_\bullet gy))\\	
&=\lambda((g_1,g_1^{-1}x),\ldots,(g_{\bullet},g^{-1}_\bullet x))\\
&=\alpha^{\bullet}\lambda (g_1,\ldots,g_{\bullet})(x)\, ,
\end{align*}
where we moved from the first line to the second one using the definition of $\alpha$, from the second line to the third one setting $x=gy$ and we concluded using the $\calG$-invariance of $\lambda$ and the definition of $\alpha$.

Conversely, if $(g,x)\in \calG$ we have 
\begin{align*}
&L(g,x)\beta^{\bullet}\lambda((g^{-1}g_1,g_1^{-1}gx),\ldots,(g^{-1}g_{\bullet},g_\bullet^{-1}gx)) \\
=&\; L(g,x)\lambda(g^{-1}g_1,\ldots,g^{-1}g_{\bullet})(x) \\
=&\;L(g,g^{-1}y)\lambda(g^{-1}g_1,\ldots,g^{-1}g_\bullet)(g^{-1}y)\\
 =&\; \lambda(g_1,\ldots,g_{\bullet})(y)\\
=&\; \beta^{\bullet} \lambda((g_1,g_1^{-1}gx),\ldots,(g_{\bullet},g_\bullet^{-1}gx)) \ ,
\end{align*}
where we moved from the first line to the second one using the definition of $\beta$, then we changed the variable setting $y=gx$ and we concluded exploiting the $G$-invariance of $\lambda$ and the definition of $\beta$. This shows that $\alpha^{\bullet}$ and $\beta^{\bullet}$ are well-defined.

It follows directly from the definition that, if $\delta^{\bullet}_{\calG},\delta^{\bullet}_{G}$ denote the coboundary operators, we have
$$\delta^{\bullet}_{\calG}\circ \alpha^{\bullet+1}=\alpha^{\bullet+2}\circ \delta^{\bullet}_{G}$$
and
$$\delta^{\bullet}_{G}\circ \beta^{\bullet+1}=\beta^{\bullet+2}\circ \delta^{\bullet}_{\calG}\,,$$
hence $\alpha^{\bullet}$ and $\beta^{\bullet}$ are also cochain maps.
Finally, it is evident that $$\alpha^{\bullet}\circ\beta^{\bullet}=	\id_{\Linfw(G^{\bullet},\Linfw(X,E))^{G}}$$ and $$\beta^{\bullet}\circ\alpha^{\bullet}=	\id_{\Linfw(\calG^{(\bullet)},E)^{\calG}}\,,$$ so the induced maps in cohomology are isomorphisms.

By Equation \eqref{measured_isomorphism_product} the cochain maps $\alpha^{\bullet}$ and $\beta^{\bullet}$ are isometric and they induce isometries at the cohomological level. This concludes the proof.
\end{proof}


Imitating what happens for groups, we want to relate our measurable bounded cohomology to Westman's measurable cohomology \cite{westman69} by defining a \emph{comparison map}.
Precisely, given a measured groupoid $\calG$ and a separable Banach module $E$ with trivial $\calG$-action (namely $g\cdot v=v$ for almost every $g\in \calG$ and every $v\in E$), the
Westman's measurable cohomology is denoted by $\mathrm{H}_{\textup{m}}^\bullet(\calG,E)$ and defined as the cohomology of the complex
$$(\textup{L}^0(\overline{\calG}^{\bullet},E),\overline{\delta}^{\bullet}) \, .$$
Here $\overline{\calG}^\bullet$ is the space of \emph{paths} in $\calG$, namely of tuples $(g_1,\ldots,g_\bullet)$ where $t(g_{i+1})=s(g_i)$ for $i=1,\ldots,\bullet-1$. We set $\overline{\calG}^0\coloneqq X$. Then $\textup{L}^0(\overline{\calG}^\bullet,E)$ is the space of Borel functions identified when they coincide up to null sets and $\overline{\delta}^{\bullet}$ is the inhomogeneous coboundary operator. Following a similar computation as in Monod \cite[Chapter 7]{monod:libro} or as in Blank \cite[Proposition 3.2.23]{blank}, 
one can show that
the cochain map
\begin{gather*}
\vartheta: \textup{L}^0(\overline{\calG}^{\bullet},E)\rightarrow \textup{L}^0(\calG^{(\bullet+1)},E)^{\calG} \,, \\
 \vartheta\lambda (g_0,\ldots,g_{\bullet})\coloneqq\lambda(g_0^{-1}g_1,g_1^{-1}g_2,\ldots,g_{\bullet-1}^{-1}g_{\bullet})
\end{gather*}
with 
$
 \vartheta\lambda (g)\coloneqq L(t(g))\lambda(t(g))
$ for every $\lambda\in  \textup{L}^0(\overline{\calG}^0,E)$
realizes an isomorphism between $\mathrm{H}_{\textup{m}}^\bullet(\calG,E)$ and the cohomology of the complex of $\calG$-invariants
$$(\textup{L}^0(\calG^{(\bullet+1)},E)^{\calG},\delta^{\bullet})\,,$$
where $\delta^{\bullet}$ is the homogeneous coboundary operator. 

In this setting, the inclusion $\Linf(\calG^{(\bullet)},E)\hookrightarrow \textup{L}^0(\calG^{(\bullet)},E)$ (where we dropped the weak$^*$ measurability in the domain by the separability of $E$ \cite[Lemma 3.3.3]{monod:libro}) is a cochain map and it preserves $\calG$-invariance. As a consequence it induces maps at a cohomological level.

\begin{defn}\label{definition comparison map}
Let $\calG$ be a measured groupoid and let $E$ be a separable Banach module with trivial $\calG$-action. The \emph{comparison map} 
$$
\mathrm{comp}^\bullet_\calG: \Hmb^{\bullet}(\calG,E)\rightarrow \Hm_{\textup{m}}^{\bullet}(\calG,E)
$$
is the map induced in cohomology by the inclusion of complexes $\Linf(\calG^{(\bullet+1)},E)^\calG \hookrightarrow \mathrm{L}^0(\calG^{(\bullet+1)},E)^\calG$.
\end{defn}

\begin{es}\label{es_comparison_groups}
Let $\calG=G$ be a locally compact group. We already observed (Remark \ref{remark_bounded_cohomology_group}) that for a separable Banach trivial $G$-module, the measurable bounded cohomology $\Hmb^\bullet(G,E)$ coincides with its continuous variant $\Hcb^\bullet(G,E)$. By the work by Austin and Moore \cite{AM13}, the cohomology of the complex $(\mathrm{L}^0(G^{\bullet+1},E)^G,\delta^\bullet)$ is isomorphic to the continuous cohomology of $G$, namely $\mathrm{H}_{\textup{c}}^\bullet(G,E)$. As a consequence, in this context, the comparison map $\mathrm{comp}_\calG^\bullet$ boils down to the usual comparison map defined in the continuous setting, namely $\mathrm{comp}^\bullet_G:\Hcb^\bullet(G,E) \rightarrow \mathrm{H}^\bullet_{\textup{c}}(G,E)$.
\end{es}

Consider now the case of a semidirect groupoid $\Gamma\ltimes X$, where $\Gamma$ is a discrete countable group and $X$ is a standard Borel $\Gamma$-space.
By \cite[Theorem 1.0]{westman:71}, one has an exponential law for the measurable cohomology of $\Gamma \ltimes X$, namely 
$$\Hm_{\textup{m}}^{\bullet}(\Gamma\ltimes X,E)\cong \Hm^{\bullet}(\Gamma,\textup{L}^0(X,E))\,,$$
where the right-hand side denotes the cohomology of the group $\Gamma$ with coefficients in the space of $E$-valued measurable functions $\textup{L}^0(X,E)$. The latter is endowed with the usual left regular action $g\cdot \lambda(x)=\lambda(g^{-1} x)$.

\begin{prop} \label{proposition comparison map}
Let $\Gamma$ be a discrete countable group and let $(X,\mu)$ be a standard Borel $\Gamma$-space. The comparison map associated to the semidirect groupoid $\calG:=\Gamma \ltimes X$ is conjugated to the composition of the comparison map $\Hb^\bullet(\Gamma,\mathrm{L}^\infty(X,E)) \rightarrow \mathrm{H}^\bullet(\Gamma,\mathrm{L}^\infty(X,E))$ with the map induced by the change of coefficients $\mathrm{L}^\infty(X,E) \hookrightarrow \mathrm{L}^0(X,E)$.
\end{prop}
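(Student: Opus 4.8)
The plan is to exhibit a commutative square of cochain complexes whose horizontal arrows are the two exponential-law isomorphisms and whose vertical arrows are the groupoid and the group comparison inclusions, and then simply to pass to cohomology. First I would fix cochain-level models. Since $E$ is separable with trivial action, weak$^*$ measurability is automatic \cite[Lemma 3.3.3]{monod:libro}, so $\Linfw(X,E)=\Linf(X,E)$, and Theorem \ref{theorem_expo} supplies an isometric cochain isomorphism
$$\alpha^\bullet\colon \Linf(\calG^{(\bullet+1)},E)^{\calG}\xrightarrow{\ \cong\ }\Linf(\Gamma^{\bullet+1},\Linf(X,E))^{\Gamma},$$
given by $\alpha^\bullet(\lambda)(\gamma_1,\dots,\gamma_{\bullet+1})(x)=\lambda((\gamma_1,\gamma_1^{-1}x),\dots,(\gamma_{\bullet+1},\gamma_{\bullet+1}^{-1}x))$, with inverse $\beta^\bullet$. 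Because $\Gamma$ carries the counting measure and $\calG^{(\bullet)}\cong\Gamma^\bullet\times X$ as measure spaces by Equation \eqref{measured_isomorphism_product}, the \emph{same} formula defines a cochain isomorphism $\alpha_0^\bullet\colon \mathrm{L}^0(\calG^{(\bullet+1)},E)^{\calG}\xrightarrow{\cong}\mathrm{L}^0(\Gamma^{\bullet+1},\mathrm{L}^0(X,E))^{\Gamma}$, where the currying isomorphism $\mathrm{L}^0(\Gamma^\bullet\times X,E)\cong\mathrm{L}^0(\Gamma^\bullet,\mathrm{L}^0(X,E))$ plays the role of \cite[Corollary 2.3.3]{monod:libro} and every invariance computation from the proof of Theorem \ref{theorem_expo} survives verbatim, being purely formal.

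Next I would assemble the square. Write $\iota_\calG$ for the groupoid comparison inclusion $\Linf(\calG^{(\bullet+1)},E)^{\calG}\hookrightarrow \mathrm{L}^0(\calG^{(\bullet+1)},E)^{\calG}$ of Definition \ref{definition comparison map}, write $\iota_\Gamma$ for the group comparison inclusion $\Linf(\Gamma^{\bullet+1},\Linf(X,E))^{\Gamma}\hookrightarrow \mathrm{L}^0(\Gamma^{\bullet+1},\Linf(X,E))^{\Gamma}$, and write $\iota_{\mathrm{coeff}}$ for the inclusion induced by the change of coefficients $\Linf(X,E)\hookrightarrow\mathrm{L}^0(X,E)$. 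The identity
$$\alpha_0^\bullet\circ\iota_\calG=\iota_{\mathrm{coeff}}\circ\iota_\Gamma\circ\alpha^\bullet$$
then holds on the nose: both sides are the currying rearrangement along the diagonal fibres, and this formula is oblivious to boundedness and to the $\Linf$-versus-$\mathrm{L}^0$ distinction in the coefficients. As all four maps are cochain maps, this is a commutative square of complexes. Passing to cohomology, $\alpha^\bullet$ becomes the isomorphism of Theorem \ref{theorem_expo} and $\alpha_0^\bullet$ the one of Westman's exponential law \cite[Theorem 1.0]{westman:71}; the left arrow induces $\mathrm{comp}^\bullet_\calG$, while the right arrow induces the composite of the group comparison map $\Hb^\bullet(\Gamma,\Linf(X,E))\to\mathrm{H}^\bullet(\Gamma,\Linf(X,E))$ with the change-of-coefficients map $\mathrm{H}^\bullet(\Gamma,\Linf(X,E))\to\mathrm{H}^\bullet(\Gamma,\mathrm{L}^0(X,E))$. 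This is precisely the asserted conjugation.

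The main obstacle is the last clause of the first paragraph: confirming that $\alpha_0^\bullet$ genuinely realizes Westman's exponential law rather than merely being \emph{some} isomorphism of complexes. Here the discreteness of $\Gamma$ does the work, since all measurability subtleties collapse — weak$^*$ measurability is automatic and the counting measure makes $\mathrm{L}^0(\Gamma^\bullet\times X,E)$ literally the space of functions $\Gamma^\bullet\to\mathrm{L}^0(X,E)$ — so $\alpha_0^\bullet$ is exactly the currying isomorphism underlying \cite[Theorem 1.0]{westman:71}. The only remaining bookkeeping is to verify that the homogeneous/inhomogeneous translation $\vartheta$ introduced just before Definition \ref{definition comparison map}, which identifies $\mathrm{H}_{\textup{m}}^\bullet(\calG,E)$ with the cohomology of the homogeneous $\calG$-invariant complex, is compatible with the inclusion $\Linf\hookrightarrow\mathrm{L}^0$; this is the standard bar-resolution compatibility and poses no difficulty.
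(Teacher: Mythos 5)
Your proposal is correct and follows essentially the same route as the paper: the paper's proof is exactly the commutative square of cochain complexes with the two exponential-law isomorphisms (Theorem \ref{theorem_expo} on top, Westman's on the bottom) as horizontal arrows and the comparison inclusions plus change of coefficients as vertical arrows, followed by passage to cohomology. Your write-up merely makes explicit two points the paper leaves implicit — that the square commutes on the nose because both vertical-then-horizontal composites are the same currying formula, and that the homogeneous $\mathrm{L}^0$-complex identification via $\vartheta$ is compatible with the inclusion $\Linf\hookrightarrow\mathrm{L}^0$ — which is a virtue, not a deviation.
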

\begin{proof}
Thanks to our Theorem \ref{theorem_expo} and Westman's exponential law \cite[Theorem 1.0]{westman:71}, we have the following commutative diagram at the level of cochains
$$
\xymatrix{
\mathrm{L}^\infty(\calG^{(\bullet+1)},E)^\calG \ar[rr]^\cong \ar[dd] && \ell^\infty(\Gamma^{\bullet+1},\mathrm{L}^\infty(X,E))^\Gamma \ar[d]\\
&& \ell^0(\Gamma^{\bullet+1},\mathrm{L}^\infty(X,E))^\Gamma \ar[d] \\
\mathrm{L}^0(\calG^{(\bullet+1)},E)^\calG \ar[rr]^\cong && \ell^0(\Gamma^{\bullet+1},\mathrm{L}^0(X,E))^\Gamma \ ,
}
$$
where $\ell^\infty$ and $\ell^0$ denote the space of bounded and unbounded functions, respectively. The left arrow is the inclusion, the top isomorphism is Theorem \ref{theorem_expo}, the bottom isomorphism is Westman exponential law. Finally the composition on the right is given by the inclusion followed by the change of coefficients. 

Passing at the level of cohomology groups, we obtain
\begin{center}
\begin{tikzcd}
\Hmb^{\bullet}(\calG,E) \arrow{dd}[swap]{\textup{comp}_{\calG}^{\bullet}}   \arrow{rr}{\cong} &&   \Hb^{\bullet}(\Gamma,\Linf(X,E))  \arrow{d}{\textup{comp}_\Gamma^{\bullet} } \\
&&  \Hm^{\bullet}(\Gamma,\Linf(X,E))  \arrow{d} \\
\Hm_{\textup{m}}^{\bullet}(\calG,E)  \arrow{rr}{\cong}  && \Hm^{\bullet}(\Gamma,\textup{L}^0(X,E))\,.
\end{tikzcd}
\end{center}
Here $\textup{comp}_\Gamma^{\bullet}$ is the usual comparison map, whereas the right-down map is the function induced by the inclusion of coefficients $\Linf(X,E)\hookrightarrow \textup{L}^0(X,E)$. This concludes the proof. 
\end{proof}

\section{Main consequences of the exponential law}\label{section_Main consequences of the exponential law}

In this section we are going to list several consequences of Theorem \ref{theorem_expo}. We start relating the bounded cohomology of orbit equivalent actions proving Theorem \ref{theorem_expo_orbit_equivalence}, which follows directly by Proposition \ref{proposition_functoriality} and by Theorem \ref{theorem_expo}. This result has interesting applications also in case of measure equivalence, since it allows to compare the bounded cohomology of measure equivalent groups (see Corollary \ref{corollary_measure_equivalence}). If we consider different actions of the same group, we prove an injectivity result for factors in degree $2$ (Proposition \ref{prop_factor}). Then, in Proposition \ref{proposition_homogeneous_semidirect_product}, we prove an isomorphism between the measurable bounded cohomology of the group action on a homogeneous space and the bounded cohomology of the stabilizer of a point. For groupoids arising from higher rank simple lie groups, we obtain a characterization of their cohomology in terms of lattices (Corollary \ref{cor_stuck_zimmer}). Finally, we exhibit vanishing results for the measurable bounded cohomology of several semidirect groupoids (see Proposition \ref{proposition_amenable_space}, Proposition \ref{proposition_ergodic_amenable_relation}, Proposition \ref{proposition_irreducible_higher_rank} and Proposition \ref{proposition_lattice_products}).

We start with the proof of

\begin{rec_thm}[\ref{theorem_expo_orbit_equivalence}]
Let $\Gamma,\Lambda$ be two countable groups and consider $\Gamma \curvearrowright X$ and $\Lambda \curvearrowright Y$ two essentially free measure preserving actions on standard Borel probability spaces. Denote by $\calG\coloneqq \Gamma\ltimes X$ and by $\calH\coloneqq \Lambda\ltimes Y$. 
If $T:X\rightarrow Y$ is a measurable isomorphism defining an orbit equivalence between $\Gamma \curvearrowright X$ and $\Lambda \curvearrowright Y$, then for every measurable coefficient $\calH$-module $E$ we have isometric isomorphisms
$$
\Hb^{\bullet}(\Lambda,\Linfw(Y,E)) \cong \Hb^{\bullet}(\Gamma,\Linfw(X,E)) \ ,
$$
induced by the cochain maps
\begin{gather*}
 \mathcal{J}^{\bullet+1}:\Linfw(\Lambda^{\bullet+1}, \Linfw(Y,E))^{\Lambda} \rightarrow  \Linfw(\Gamma^{\bullet+1}, \Linfw(X,E))^{\Gamma}\,\\
 \mathcal{J}^{\bullet+1}\theta (\gamma_1,\ldots,\gamma_{\bullet})(x)= 
\theta (\sigma_T(\gamma_1,\gamma_1^{-1}x),\ldots,\sigma_T(\gamma_{\bullet+1},\gamma_{\bullet+1}^{-1}x))(T(x))
\end{gather*}
for every $\gamma_1,\ldots,\gamma_{\bullet+1}$ in $\Gamma$ and $x\in X$ and $\sigma_T:\Gamma\times X\rightarrow \Lambda$ is the cocycle associated to the orbit equivalence.

If the actions are also ergodic but only weakly orbit equivalent, then we have similar isomorphisms
$$
\Hb^{\bullet}(\Lambda,\Linfw(Y,E)) \cong \Hb^{\bullet}(\Gamma,\Linfw(X,E)) \ .
$$
\end{rec_thm}

\begin{proof}
By Corollary \ref{corollary_orbit_equivalence} an orbit equivalence between $\Gamma \curvearrowright X$ and $\Lambda \curvearrowright Y$ determines an isomorphism of the measurable bounded cohomology of the semidirect products, namely
$$
\Hmb^\bullet(\Lambda \ltimes Y,E) \cong \Hmb^\bullet(\Gamma \ltimes X,E) \ . 
$$
Applying Theorem \ref{theorem_expo} twice, once on each side, we get the desired isomorphism. The formula given at the level of cochains follows by an easy diagram chasing. An explicit computation leads to 
\begin{align*}
 \mathcal{J}^{\bullet+1}\theta (\gamma_1,\ldots,\gamma_{\bullet+1})(x)\coloneqq &\; (\alpha^{\bullet+1}_{\calG} \circ \Linfw(f_T, \id_E)^{\bullet+1} \circ \beta^{\bullet+1}_{\calH}) \theta(\gamma_1,\ldots,\gamma_{\bullet+1})(x)\\
=  &\;(\alpha^{\bullet+1}_{\calG} \circ \Linfw(f_T, \id_E)^{\bullet+1}) \theta ((\gamma_1,\gamma_1^{-1}x),\ldots,(\gamma_{\bullet+1},\gamma_{\bullet+1}^{-1}x))\\
= &\;\alpha^{\bullet+1}_{\calG} \theta (f_T(\gamma_1,\gamma_1^{-1}x),\ldots,f_T(\gamma_{\bullet+1},\gamma_{\bullet+1}^{-1}x))\\
= &\;\alpha^{\bullet+1}_{\calG} \theta ((\sigma_T(\gamma_1,\gamma_1^{-1}x), T(\gamma_1^{-1}x)),\ldots\\
&\ldots,(\sigma_T(\gamma_{\bullet+1},\gamma_{\bullet+1}^{-1}x), T(\gamma_{\bullet+1}^{-1}x)))\\
= &\;\theta (\sigma_T(\gamma_1,\gamma_1^{-1}x),\ldots,\sigma_T(\gamma_{\bullet+1},\gamma_{\bullet+1}^{-1}x))(T(x))
\end{align*}
where to pass from the third line to the fourth one we applied the definition of $f_T$, and we moved from the fourth line to the fifth one by exploiting the fact $\sigma_T(\gamma,\gamma^{-1}x)T(\gamma^{-1}x)=T(x)$. 

The proof in the case of a weak orbit equivalence is analogous, but one needs to consider the similarity of Proposition \ref{proposition_discrete}.
\end{proof}

Exploiting the previous theorem, we can prove that measure equivalent countable groups admit actions whose measurable bounded cohomology coincide. 
\begin{cor}\label{corollary_measure_equivalence_induction}
Let $\Gamma$ and $\Lambda$ be measure equivalent countable groups. Then there exist two standard Borel probability spaces $X$ and $Y$ and essentially free ergodic probability measure preserving actions $\Gamma \curvearrowright X$ and $\Lambda \curvearrowright Y$ inducing isomorphisms
$$
\Hb^{\bullet}(\Lambda,\Linfw(Y,E)) \cong \Hb^{\bullet}(\Gamma,\Linfw(X,E)) \ ,
$$
for every measurable coefficient $(\Lambda \ltimes Y)$-module $E$.  
\end{cor}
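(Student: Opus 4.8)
The plan is to combine two ingredients that are already available in the paper: Furman's conversion of measure equivalence into weak orbit equivalence, and the weak orbit equivalence half of Theorem \ref{theorem_expo_orbit_equivalence}. This is the exact analogue of the proof of Corollary \ref{corollary_measure_equivalence}, the only difference being that the conclusion is now phrased at the level of the twisted bounded cohomology of the groups rather than the measurable bounded cohomology of the associated semidirect groupoids. In effect, the present statement is the image of Corollary \ref{corollary_measure_equivalence} under the exponential law, and since that law has already been folded into Theorem \ref{theorem_expo_orbit_equivalence}, the proof reduces to invoking the correct results in sequence.

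First I would appeal to \cite[Theorem 2.5]{furman}: because $\Gamma$ and $\Lambda$ are measure equivalent, there exist standard Borel probability spaces $X$ and $Y$ equipped with essentially free ergodic probability measure preserving actions $\Gamma \curvearrowright X$ and $\Lambda \curvearrowright Y$ that are weakly orbit equivalent. This step produces precisely the spaces and actions named in the statement, and it is the place where the hypothesis of measure equivalence is consumed. With these actions fixed, I would then apply the second (weak orbit equivalence) part of Theorem \ref{theorem_expo_orbit_equivalence}: since the two actions are ergodic, essentially free and only weakly orbit equivalent, that theorem directly yields isometric isomorphisms
$$
\Hb^{\bullet}(\Lambda,\Linfw(Y,E)) \cong \Hb^{\bullet}(\Gamma,\Linfw(X,E))
$$
for every measurable coefficient $(\Lambda \ltimes Y)$-module $E$, which is exactly the desired conclusion.

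There is essentially no genuine obstacle here, since all of the analytic content has been front-loaded into Theorem \ref{theorem_expo} (through Theorem \ref{theorem_expo_orbit_equivalence}) and into Furman's theorem. The only point meriting a line of verification is that the coefficient hypotheses match up: but Theorem \ref{theorem_expo_orbit_equivalence} is already stated for an arbitrary measurable coefficient $(\Lambda \ltimes Y)$-module, so no additional compatibility check is needed, and the isomorphism is inherited verbatim. If one wished to make the dependence on the exponential law explicit, one could instead start from Corollary \ref{corollary_measure_equivalence}, which gives $\Hmb^{\bullet}(\Lambda \ltimes Y,E) \cong \Hmb^{\bullet}(\Gamma \ltimes X,E)$, and then apply Theorem \ref{theorem_expo} to each side; the two routes produce the same isomorphism.
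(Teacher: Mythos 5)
Your proof is correct and follows essentially the same route as the paper's: invoke \cite[Theorem 2.5]{furman} to obtain weakly orbit equivalent, essentially free, ergodic probability measure preserving actions, then apply the weak-orbit-equivalence part of Theorem \ref{theorem_expo_orbit_equivalence}. One small correction: in the weakly orbit equivalent case that theorem yields only isomorphisms, not \emph{isometric} ones (cf.\ Remark \ref{remark_monod_shalom}), so you should drop the word ``isometric''---the corollary claims no more than this anyway.
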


\begin{proof}
By \cite[Theorem 2.5]{furman} we know that there exist standard Borel probability spaces $X$ and $Y$ and essentially free probability measure preserving ergodic actions $\Gamma \curvearrowright X$ and $\Lambda \curvearrowright Y$ which are weakly orbit equivalent. We can apply Theorem \ref{theorem_expo_orbit_equivalence} to conclude. 
\end{proof}

\begin{oss}\label{remark_monod_shalom}
An isomorphism in bounded cohomology in case of measure equivalence has already been proved by Monod and Shalom \cite[Theorem 4.6]{MonShal}, and in some sense it is related to Corollary \ref{corollary_measure_equivalence}. Precisely, given a coupling $\Omega$ between $\Gamma$ and $\Lambda$ and a coefficient $(\Gamma\times \Lambda)$-module $E$ (see Remark \ref{remark_bounded_cohomology_group}), they provided isometric isomorphisms
\begin{equation}\label{equation_isom_mon_shal}
\Hb^{\bullet}(\Lambda,\Linfw(\Omega,E)^{\Gamma}) \cong\Hb^{\bullet}(\Gamma,\Linfw(\Omega,E)^{\Lambda}) \,.
\end{equation}
In the case when the $(\Gamma\times \Lambda)$-action on $E$ is trivial, we have 
that $\Linfw(\Omega,E)^{\Gamma}\cong \Linfw(\Gamma\backslash \Omega,E)$ and $\Linfw(\Omega,E)^{\Lambda}\cong\Linfw(\Lambda\backslash \Omega,E)$ and Equation \eqref{equation_isom_mon_shal} becomes
\begin{equation}\label{equation_isom_mon_shal2}
\Hb^{\bullet}(\Lambda,\Linfw(\Gamma\backslash \Omega,E))\cong\Hb^{\bullet}(\Gamma,\Linfw(\Lambda\backslash \Omega,E)) \,.
\end{equation}

In this setting, the proof of \cite[Theorem 2.5]{furman} 
shows that one can take an extension $\Omega'$ of $\Omega$ such that $\Lambda\curvearrowright\Gamma\backslash\Omega'$ and $\Gamma\curvearrowright\Lambda\backslash \Omega'$ are essentially free. 
Since $E$ inherits a natural structure of measurable coefficient $(\Lambda\ltimes \Gamma\backslash\Omega')$-module where no twist appears in the action, we can apply Corollary \ref{corollary_measure_equivalence} getting an isomorphism
\begin{equation}\label{equation_isom_mon_shal3}
\Hb^{\bullet}(\Lambda,\Linfw(\Gamma\backslash \Omega',E))\cong\Hb^{\bullet}(\Gamma,\Linfw(\Lambda\backslash \Omega',E)) \,.
\end{equation}

To conclude, in our result we consider a wider family of coefficient modules, since we admit parametrized actions depending on the standard Borel space, but we only provided isomorphisms for an extended coupling for which the actions are essentially free. When the action does not depend on the parameter, that is when it comes from a representation, our result follows by \cite[Proposition 4.6]{MonShal}, which holds for any coupling but only for non-twisted actions.
Moreover, in general the involved actions are only weakly isomorphic, but a priori nothing can be said about the norms.
\end{oss}

Something relevant can be said also for factors (Example \ref{example_factor}), but only when the cohomological degree is equal to $2$. 

\begin{prop}\label{prop_factor}
Let $\Gamma$ be a countable group and consider two measure preserving actions $\Gamma \curvearrowright (X,\mu_X),\Gamma \curvearrowright (Y,\mu_Y)$ on two standard Borel probability spaces. Define $\calG:=\Gamma \ltimes X$ and $\calH:=\Gamma \ltimes Y$. If $Y$ is a factor of $X$, then for any separable Hilbert coefficient $\calH$-module $E$ we have that 
$$
\Hmb^2(f_T):\Hmb^2(\calH,E) \rightarrow \Hmb^2(\calG,E) \ ,
$$
is injective, where $f_T:\calG\rightarrow \calH$ is the map induced by the factor map. 
\end{prop}

\begin{proof}
Let $T:X \rightarrow Y$ be the factor map. Notice that $T$ induces a map on the Bochner spaces of essentially bounded functions, namely
$$
T^\ast:\Linf(Y,E) \rightarrow \Linf(X,E) \ , \ \ T^\ast(\lambda):=\lambda \circ T \ ,
$$
and more generally for any Bochner $\mathrm{L}^p$-space. 

By applying twice Theorem \ref{theorem_expo}, we can deduce that the map
\begin{equation} \label{eq_change_coefficients_factor}
\Hb^2(\Gamma,\Linf(Y,E)) \rightarrow \Hb^2(\Gamma,\Linf(X,E)),
\end{equation}
induced by $T$ at the level of coefficients is conjugated to $\Hmb^2(f_T)$. As a consequence it is sufficient to show that the map of Equation \eqref{eq_change_coefficients_factor} is injective. Notice that we dropped the weak$^\ast$ measurability requirement on the essentially bounded functions in virtue of \cite[Lemma 3.3.3]{monod:libro}.

We can apply to both sides of Equation \eqref{eq_change_coefficients_factor} the change of coefficients from essentially bounded to square-integrable functions, namely we have a commutative diagram:
$$
\xymatrix{
\Hb^2(\Gamma,\Linf(Y,E)) \ar[rr] \ar[d]^{\kappa_Y} && \Hb^2(\Gamma,\Linf(X,E)) \ar[d]^{\kappa_X} \\
\Hb^2(\Gamma,\mathrm{L}^2(Y,E)) \ar[rr] && \Hb^2(\Gamma,\mathrm{L}^2(X,E))\ ,
}
$$
where $\kappa_Y$ (respectively $\kappa_X$) is the map induced by the change of coefficients from $\mathrm{L}^\infty$ to $\mathrm{L}^2$ on $Y$ (respectively on $X$). Both $\kappa_Y$ and $\kappa_X$ are injective by \cite[Corollary 9]{burger2:articolo}. In fact, the inclusion 
$$
\Linf(Y,E) \rightarrow \mathrm{L}^2(Y,E) \ ,
$$
is adjoint and $\mathrm{L}^2(Y,E)$ is separable by hypothesis (and the same holds for $X$). In a similar way, the map 
$$
T^\ast:\mathrm{L}^2(Y,E) \rightarrow \mathrm{L}^2(X,E) \ , 
$$
is injective and adjoint. Thus the bottom arrow is injective, still by \cite[Corollary 9]{burger2:articolo}. By the commutativity of the diagram the statement follows. 
\end{proof}

We now move to the case of homogeneous semidirect products for locally compact groups. In this case we will see that we have a variant of the Eckmann-Shapiro induction isomorphism relating the measurable cohomology of the semidirect product with the usual continuous bounded cohomology of the stabilizer of a point. 

\begin{rec_prop}[\ref{proposition_homogeneous_semidirect_product}]
Let $G$ be a locally compact group and let $H<G$ be a closed subgroup. Let $E$ be a coefficient $G$-module in the sense of Monod. If we endow $E$ with the structure of measurable coefficient $G \ltimes G/H$-module coming from the $G$-action, then we have isometric isomorphisms
$$
\Hmb^\bullet(G \ltimes G/H,E) \cong \Hcb^{\bullet}(H,E) \ .
$$
In particular, when $H$ is amenable the cohomology vanishes identically when the degree is greater than or equal to one. 
\end{rec_prop}

\begin{proof}
By Theorem \ref{theorem_expo} we know that 
$$
\Hmb^\bullet(G \ltimes G/H,E) \cong \Hcb^\bullet(G,\Linfw(G/H,E)) \ ,
$$
where the $G$ action on the module $\Linfw(G/H,E)$ comes from the structure of coefficient $G$-module of $E$. 

Since $E$ is a coefficient $G$-module, as observed in \cite[Remark 10.1.2 (v)]{monod:libro} we have an isometric isomorphism
$$
\mathrm{I}^G_HE:=\Linfw(G,E)^H \cong \Linfw(G/H,E) \ ,
$$
where $\mathrm{I}^G_HE$ is the induction module defined by Monod \cite[Definition 10.1.1]{monod:libro} endowed with the right translation $G$-action
$$g\cdot \lambda (h)\coloneqq \lambda(hg)\,.$$
In virtue of the Eckmann-Shapiro induction \cite[Proposition 10.1.3]{monod:libro} we have isometric isomorphisms
$$
\Hcb^\bullet(G,\Linfw(G/H,E)) \cong \Hcb^\bullet(G,\mathrm{I}^G_HE)  \cong \Hcb^\bullet(H,E) \ ,
$$
and the first part of the theorem is proved. When $H$ is amenable, the second part follows from \cite[Corollary 7.5.11]{monod:libro}.
\end{proof}

\begin{cor}\label{cor_stuck_zimmer}
Let $G$ be a connected simple Lie group of rank at least two and $E$ be a coefficient $G$-module in the sense of Monod. Let $(X,\mu)$ be a Lebesgue $G$-space and assume that the action is not essentially free. Then, there exists a lattice $\Gamma < G$ such that 
$$
\Hmb^\bullet(G \ltimes X,E) \cong \Hb^\bullet(\Gamma,E) \ . 
$$
\end{cor}

\begin{proof}
We know that the action of $G$ on $X$ is either essentially transitive or properly ergodic. The second possibility is ruled out by the Stuck-Zimmer theorem \cite[Theorem 2.1]{stuck:zimmer}, otherwise the action would be essentially free. As a consequence we know that $G \curvearrowright X$ must be essentially transitive. This means that, up to passing to an inessential contraction, our semidirect groupoid has the form $G \ltimes G/\Gamma$, for some $\Gamma < G$ closed subgroup. The existence of a finite $G$-invariant measure on $G/\Gamma$, together with the Borel density theorem \cite[Theorem 3.2.5]{zimmer:libro}, implies that $\Gamma$ must be a lattice. 

By applying Proposition \ref{proposition_homogeneous_semidirect_product} we obtain 
$$
\Hmb^\bullet(G \ltimes X, E) \cong \Hmb^\bullet(G \ltimes G/\Gamma,E) \cong \Hb^\bullet(\Gamma,E) \ ,
$$
and this concludes the proof. 
\end{proof}

The amenability statement of the Proposition \ref{proposition_homogeneous_semidirect_product} can be generalized to the more general setting of amenable spaces. In fact we have the following 

\begin{prop}\label{proposition_amenable_space}
Let $G$ be a locally compact group and let $E$ be a coefficient $G$-module in the sense of Monod. Let $S$ be a standard Borel amenable $G$-space. Then 
$$
\Hmb^\bullet(G \ltimes S,E) \cong 0
$$ 
when the degree is greater than or equal to one. 
\end{prop}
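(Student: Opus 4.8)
The plan is to reduce the statement to the continuous bounded cohomology of $G$ by means of the exponential law, and then to invoke the relative injectivity of $\Linfw$-modules over amenable spaces established by Burger and Monod. The structure parallels the proof of Proposition \ref{proposition_homogeneous_semidirect_product}, with the Eckmann--Shapiro step replaced by a vanishing statement coming directly from amenability.

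First I would apply Theorem \ref{theorem_expo} to the regular $G$-space $S$. Since $S$ is regular by hypothesis and $E$ carries the structure of measurable coefficient $G \ltimes S$-module induced by the genuine $G$-action, the theorem yields a canonical isometric isomorphism
$$\Hmb^\bullet(G \ltimes S, E) \cong \Hcb^\bullet(G, \Linfw(S,E)),$$
where $\Linfw(S,E)$ is equipped with the coefficient $G$-module structure of Equation \eqref{equation_coefficient_G_module}. This transports the whole problem into the well-understood setting of continuous bounded cohomology of the locally compact group $G$.

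Second, I would observe that the right-hand side is exactly the object treated by Burger and Monod. Since $S$ is a regular \emph{amenable} $G$-space and $E$ is a coefficient $G$-module, their theorem \cite[Theorem 1]{burger2:articolo} guarantees that $\Linfw(S,E)$ is a \emph{relatively injective} coefficient $G$-module. A relatively injective module is resolved by the trivial strong resolution consisting of itself in degree zero, so by \cite[Proposition 7.4.1]{monod:libro} its continuous bounded cohomology vanishes in every degree greater than or equal to one. Combining this vanishing with the isomorphism above gives $\Hmb^\bullet(G \ltimes S, E) \cong 0$ for $\bullet \geq 1$, as claimed.

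The only point requiring genuine care, rather than a direct citation, is the compatibility of the coefficient structures: one must check that the $G$-module $\Linfw(S,E)$ produced by the exponential law, carrying the action of Equation \eqref{equation_coefficient_G_module} built from the contragredient representation $L$ on $E$, coincides with the $\Linfw$-module to which Burger--Monod relative injectivity applies. Because here the $G \ltimes S$-action on $E$ is merely the pullback of the original $G$-representation along the groupoid structure, this action is the standard diagonal-type one considered in \cite{burger2:articolo}, so the verification is a routine unwinding of definitions. For this reason I do not expect any serious obstacle; the content of the proposition is essentially the juxtaposition of the exponential law with Burger and Monod's relative injectivity.
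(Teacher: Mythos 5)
Your proposal is correct and follows essentially the same route as the paper's own proof: Theorem \ref{theorem_expo} to pass to $\Hcb^\bullet(G,\Linfw(S,E))$, then relative injectivity of $\Linfw(S,E)$ from the amenability of $S$ (the paper cites \cite[Theorem 5.7.1]{monod:libro}, but this is the same result as \cite[Theorem 1]{burger2:articolo}, which the paper itself invokes in its introduction and in Remark \ref{remark_amenability}), and finally \cite[Proposition 7.4.1]{monod:libro} for the vanishing in degree at least one. Your concern about compatibility of the coefficient structures is legitimate but resolves exactly as you anticipate, so no gap remains.
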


\begin{proof}
In virtue of our Theorem \ref{theorem_expo} we have that 
$$
\Hmb^\bullet(G \ltimes S,E) \cong \Hcb^\bullet(G,\Linfw(S,E)) \ . 
$$
By \cite[Theorem 5.7.1]{monod:libro} the amenability of $S$ implies that the coefficient $G$-module $\Linfw(S,E)$ is relatively injective. By \cite[Proposition 7.4.1]{monod:libro} we must have that the cohomology 
$$
\Hcb^\bullet(G,\Linfw(S,E)) \cong 0
$$
when the degree is greater than or equal to one.  
\end{proof}

\begin{es}
Since the action of a locally compact group $G$ on its Furstenberg boundary $B$ is amenable, for every coefficient $G$-module $E$ we have that 
$$
\Hmb^\bullet(G \ltimes B,E) \cong 0
$$
whenever the degree is greater than or equal to $1$, thanks to Proposition \ref{proposition_amenable_space}.
\end{es}

%

Another important class of amenable groupoids is given by amenable $\mathrm{II}_1$-relations (see Example \ref{remark_feldman_moore_theorem}).


\begin{prop}\label{proposition_ergodic_amenable_relation}
Let $\calR$ be an amenable $\mathrm{II}_1$-relation. For every coefficient $\calR$-module $E$ endowed with the trivial action, we have that 
$$
\Hmb^\bullet(\calR,E) \cong 0
$$
when the degree is greater or equal than one. 
\end{prop}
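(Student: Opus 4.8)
The plan is to reduce the statement to the already-proved Proposition \ref{proposition_amenable_space} by realizing $\calR$ as a semidirect groupoid to which the exponential law applies. First I would invoke the structure theory of amenable equivalence relations: an amenable $\mathrm{II}_1$-relation is hyperfinite, and hence (by the Ornstein--Weiss theorem \cite{OW80} together with the Connes--Feldman--Weiss classification) isomorphic, as a measured groupoid, to the orbit equivalence relation $\calR_{\mathbb{Z}}$ of an ergodic, essentially free, probability measure preserving action of $\mathbb{Z}$ on a standard Borel probability space $(X,\mu)$. Since this action is essentially free, $\calR_{\mathbb{Z}}$ coincides with the semidirect groupoid, giving an isomorphism of measured groupoids $\calR \cong \mathbb{Z}\ltimes X$.

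Next I would transport the coefficients and apply functoriality. The module $E$ carries the trivial $\calR$-action; pulling back along the isomorphism $\mathbb{Z}\ltimes X \cong \calR$ endows $E$ with the trivial $(\mathbb{Z}\ltimes X)$-action, which is precisely the $(\mathbb{Z}\ltimes X)$-module structure induced by $E$ viewed as a trivial coefficient $\mathbb{Z}$-module in the sense of Monod (recall that coefficient modules are duals of separable Banach spaces, so $E$ qualifies). By the invariance of measurable bounded cohomology under isomorphism (Corollary \ref{corollary_invariance_similarity}) I then obtain isometric isomorphisms
$$\Hmb^\bullet(\calR,E) \cong \Hmb^\bullet(\mathbb{Z}\ltimes X,E).$$

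Finally I would conclude via the exponential law. Because $\mathbb{Z}$ is discrete, $X$ is automatically a regular $\mathbb{Z}$-space, and because $\mathbb{Z}$ is amenable, $X$ is an amenable $\mathbb{Z}$-space. Proposition \ref{proposition_amenable_space}, applied with $G=\mathbb{Z}$ and $S=X$, then yields $\Hmb^\bullet(\mathbb{Z}\ltimes X,E)\cong 0$ in degrees $\geq 1$; equivalently, Theorem \ref{theorem_expo} identifies this group with $\Hcb^\bullet(\mathbb{Z},\Linfw(X,E))$, which vanishes in positive degrees since amenable groups are boundedly acyclic \cite[Corollary 7.5.11]{monod:libro}. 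Combining this with the displayed isomorphism gives $\Hmb^\bullet(\calR,E)\cong 0$ for $\bullet\geq 1$.

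The main obstacle is really an external input rather than a computation: one must know that an amenable $\mathrm{II}_1$-relation can be realized as an essentially free $\mathbb{Z}$-action, i.e. the hyperfiniteness of amenable measured relations. Once the relation is written as a semidirect groupoid $\mathbb{Z}\ltimes X$, the rest follows directly from the exponential law and the bounded acyclicity of amenable groups. This is also where the hypothesis that the coefficient action is trivial enters: it forces the cocycle term $L(g,g^{-1}x)$ in Equation \eqref{equation_coefficient_G_module} to be the identity, so that $\Linfw(X,E)$ carries the plain regular $\mathbb{Z}$-representation and no parametrized twist survives.
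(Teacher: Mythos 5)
Your proposal is correct and follows essentially the same route as the paper's proof: reduce via Ornstein--Weiss to an ergodic $\mathbb{Z}$-action, identify $\calR$ with $\mathbb{Z}\ltimes X$ up to (weak) isomorphism, and conclude by the exponential law (Theorem \ref{theorem_expo}) together with the bounded acyclicity of the amenable group $\mathbb{Z}$. Your added care about essential freeness (needed to identify the orbit relation with the semidirect groupoid) and the alternative phrasing through Proposition \ref{proposition_amenable_space} are fine refinements of the same argument, not a different one.
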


\begin{proof}
Any amenable $\mathrm{II}_1$-relation is equivalent to the orbital relation of the action of $\mathbb{Z}$ on some ergodic space $(X,\mu)$ by Ornstein and Weiss \cite{OW80}. We claim that the action must be essentially free. By contradiction, suppose that there exists a positive measure set $U$ on which the field of stabilizers is not trivial. This would imply that the associated equivalence classes in $X$ would be finite, contradicting the fact that $\calR$ is countable. 

Since the cohomology is invariant in the (weak) isomorphism class, we have that 
$$
\Hmb^\bullet(\calR,E) \cong \Hmb^\bullet(\mathbb{Z} \ltimes X,E) \ ,
$$
and by Theorem \ref{theorem_expo} the right-hand side is given by
$$
\Hmb^\bullet(\matZ \ltimes X,E) \cong \Hb^\bullet(\mathbb{Z},\Linfw(X,E)) \ .
$$
The statement follows by the amenability of $\mathbb{Z}$ and by \cite[Corollary 7.5.11]{monod:libro}. 
\end{proof}

We want to underline that both the case of amenable $G$-spaces in Proposition \ref{proposition_amenable_space} and the case of amenable $\mathrm{II}_1$-relations in Proposition \ref{proposition_ergodic_amenable_relation} are particular instances of a more general fact that we are going to prove in Theorem \ref{theorem_amenability}.
In fact, we will see that for amenable measured groupoids (Definition \ref{definition_amenable_groupoid}) the measurable cohomology vanishes.

As already mentioned in the introduction, explicit computations of bounded cohomology for groups are difficult to obtain. Notice that bounded acyclicity is just a necessary condition for amenability, and the two notions becomes equivalent in the discrete case only if one requires the vanishing of the bounded cohomology with \emph{all} dual Banach modules \cite[Theorem 2.5]{Johnson}.
A recent striking result proved by Monod shows that the bounded cohomology of the \emph{Thompson group} $F$ with coefficients in any separable dual Banach $F$-module vanishes in all positive degrees \cite[Theorem 2]{monod:22}. 
This may be interpreted as a possible clue of the amenability of $F$, which remains an outstanding open question.
We want now to exploit both Monod's result and Theorem \ref{theorem_expo} to show that any semidirect groupoid associated to the Thompson group has trivial $2$-cohomology.

Let $\Gamma$ be a discrete group such that its bounded cohomology vanishes in degree $2$ for any separable dual Banach space. We call this property $(\mathbf{Ba2})$. 

\begin{cor}\label{corollary_thompson}
Let $\Gamma$ be a discrete group satisfying $(\mathbf{Ba2})$, let $S$ be a Lebesgue $\Gamma$-space and consider a measurable separable dual Banach $\Gamma \ltimes S$-space $E$. Then 
$$\Hmb^{2}(\Gamma \ltimes S,E)\cong 0 \ .$$
In particular, the statement holds for the Thompson group $F$. 
\end{cor}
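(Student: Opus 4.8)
The plan is to reduce the statement to property $(\mathbf{Ba})$ for $\Gamma$ by means of the exponential law. First I would observe that, since $\Gamma$ is discrete, the Lebesgue $\Gamma$-space $S$ is automatically regular (see the example following Definition \ref{definition_regular_space}), so that Theorem \ref{theorem_expo} applies and yields a canonical isometric isomorphism
$$
\Hmb^{\bullet}(\Gamma \ltimes S,E) \cong \Hb^{\bullet}(\Gamma,\Linfw(S,E)) \ ,
$$
where we have dropped the subscript ``c'' since $\Gamma$ carries the discrete topology. Here the $\Gamma$-module structure on $\Linfw(S,E)$ is the one of Equation \eqref{equation_coefficient_G_module}, induced by the coefficient structure of $E$.

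The next step is to verify that $\Linfw(S,E)$ is itself a \emph{separable} dual Banach $\Gamma$-module, so that property $(\mathbf{Ba})$ can legitimately be invoked. By hypothesis $E=(E^{\flat})^{*}$ with $E^{\flat}$ separable, and the discussion preceding Theorem \ref{theorem_expo} identifies $\Linfw(S,E)$ with the dual of $\Lone(S,E^{\flat})$ as a coefficient $\Gamma$-module in the sense of Monod. Since $S$ is standard and $E^{\flat}$ is separable, the predual $\Lone(S,E^{\flat})$ is separable; hence $\Linfw(S,E)$ is a separable dual Banach module, exactly the class of coefficients to which $(\mathbf{Ba})$ refers. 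This is the only genuinely technical point, and it amounts to the standard fact that the Bochner space $\Lone(S,E^{\flat})$ inherits separability from $S$ and $E^{\flat}$.

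Having established this, property $(\mathbf{Ba})$ gives directly $\Hb^{\bullet}(\Gamma,\Linfw(S,E)) \cong 0$ in every positive degree, whence $\Hmb^{\bullet}(\Gamma \ltimes S,E) \cong 0$ in positive degrees through the isomorphism above. Finally, for the Thompson group $F$, Monod's result \cite[Theorem 2]{monod:22} asserts precisely that the bounded cohomology of $F$ vanishes in all positive degrees with coefficients in any separable dual Banach $F$-module, i.e.\ $F$ satisfies $(\mathbf{Ba})$; applying the argument with $\Gamma=F$ concludes the proof. I expect the main (and essentially only) obstacle to be the separability check for $\Linfw(S,E)$, which must be phrased carefully so that the module genuinely falls within the scope of $(\mathbf{Ba})$.
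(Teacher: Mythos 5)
Your opening move coincides with the paper's: since $\Gamma$ is discrete, $S$ is automatically regular, and Theorem \ref{theorem_expo} gives the isometric isomorphism $\Hmb^{\bullet}(\Gamma \ltimes S,E)\cong \Hb^{\bullet}(\Gamma,\Linfw(S,E))$ (the paper also drops the weak$^*$ measurability via \cite[Lemma 3.3.3]{monod:libro}). The gap is exactly at the step you yourself singled out as the only technical point. You argue that, because the predual $\Lone(S,E^{\flat})$ is separable, the module $\Linfw(S,E)$ is a ``separable dual Banach $\Gamma$-module'' and hence lies within the scope of $(\mathbf{Ba})$. This inference is false: separability passes from a dual space to its predual, not the other way around. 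Already for $S=[0,1]$ with Lebesgue measure and $E=\mathbb{R}$ one has $\Linf([0,1],\mathbb{R})=\Lone([0,1],\mathbb{R})^{*}$, which has separable predual but is not separable. Property $(\mathbf{Ba})$, i.e.\ the hypothesis of \cite[Theorem 2]{monod:22}, requires the coefficient module itself to be separable as a Banach space; the paper stresses precisely this obstruction, observing that $\Linf(S,E)$ is ``only semi-separable'', so that $(\mathbf{Ba})$ cannot be applied directly. As written, your appeal to $(\mathbf{Ba})$ is therefore unjustified, and no more careful phrasing of the separability check can rescue it, because the claim being checked is simply not true.

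The paper closes this gap with an embedding argument that your proposal is missing. The inclusion $\Linf(S,E)\rightarrow \mathrm{L}^{2}(S,E)$ is an adjoint morphism of coefficient modules, hence it induces an injective map $\Hb^{2}(\Gamma,\Linf(S,E))\rightarrow \Hb^{2}(\Gamma,\mathrm{L}^{2}(S,E))$ by \cite[Corollary 9]{burger2:articolo}. The Bochner space $\mathrm{L}^{2}(S,E)$, unlike the $\Linf$-space, genuinely \emph{is} a separable dual Banach $\Gamma$-module --- and here is where the hypothesis that $E$ itself is separable (not merely its predual) is actually used, a hypothesis your argument never invoked. Applying $(\mathbf{Ba})$ to $\mathrm{L}^{2}(S,E)$ gives $\Hb^{\bullet}(\Gamma,\mathrm{L}^{2}(S,E))\cong 0$, and the injectivity then forces the vanishing of $\Hb^{\bullet}(\Gamma,\Linf(S,E))$, hence of $\Hmb^{\bullet}(\Gamma\ltimes S,E)$. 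To repair your proof, replace the faulty separability claim with this adjoint-injection step (or some other reduction to a coefficient module that is honestly separable).
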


\begin{proof}
By Theorem \ref{theorem_expo} it holds
$$
\Hmb^2(\Gamma \ltimes S, E) \cong \Hb^2( \Gamma ,\Linf(S,E)) \ ,
$$ 
where we dropped the weak$^*$ measurability on coefficient module of the right-hand side by \cite[Lemma 3.3.3]{monod:libro}. 

The coefficient module $\Linf(S,E)$ is only semi-separable, so we cannot apply directly condition $(\mathbf{Ba2})$. In any case, we know that the inclusion 
$$
\Linf(S,E) \rightarrow \mathrm{L}^2(S,E) \ 
$$
is adjoint and it induces an injective map 
$$
\Hb^2(\Gamma,\mathrm{L}^\infty(S,E)) \rightarrow \Hb^2(\Gamma,\mathrm{L}^2(S,E)) \ ,
$$
by \cite[Corollary 9]{burger2:articolo}. Being $\mathrm{L}^2(S,E)$ a separable dual Banach $\Gamma$-module by the separability of $S$ and $E$, we can apply condition $(\mathbf{Ba2})$, so
$$
\Hb^2(\Gamma,\mathrm{L}^2(S,E)) \cong 0 \ .
$$ 
Since the Thompson group $F$ satisfies $(\mathbf{Ba2})$ by \cite[Theorem 2]{monod:22}, the statement follows. 
\end{proof}

\begin{oss}
More generally, given a discrete group $\Gamma$, suppose that $\Hb^k(\Gamma,H)=0$ for every semi-separable Banach $\Gamma$-module $H$. Then, for every Lebesgue $\Gamma$-space $S$, we have that
$$
\Hmb^k(\Gamma \ltimes S,E)=0,
$$
where $E$ is any measurable Banach $\Gamma \ltimes S$-module. This holds for instance in the case of the Thompson group $F$, as noticed by Monod \cite{monod:22}. 
\end{oss}

We conclude this section with some applications of Theorem \ref{theorem_expo} to the case of higher rank lattices. 

\begin{prop}\label{proposition_irreducible_higher_rank}
Let $\Gamma <G=\mathbf{G}(\mathbb{R})^\circ$ be a lattice in the real points of a connected, simply connected, almost simple $\mathbb{R}$-algebraic group of rank at least two. Let $(X,\mu)$ be an ergodic Lebesgue $\Gamma$-space.  If $\Hb^2(\Gamma,\mathbb{R}) \cong 0$ then
$$
\Hmb^2(\Gamma \ltimes X,\mathbb{R}) \cong 0 \ ,
$$
where $\mathbb{R}$ is endowed with the trivial module structure. 
\end{prop}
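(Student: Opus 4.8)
The plan is to turn the statement into a vanishing computation inside the ordinary bounded cohomology of $\Gamma$ by means of the exponential law, and then to feed in the higher-rank machinery. First I would observe that, since $\Gamma$ is a discrete countable group, the continuity condition of Definition \ref{definition_regular_space} is vacuous, so $X$ is automatically a regular $\Gamma$-space (as in the Example following that definition). Hence Theorem \ref{theorem_expo} applies and gives an isometric isomorphism
$$\Hmb^2(\Gamma\ltimes X,\matR)\cong \Hb^2(\Gamma,\Linfw(X,\matR)).$$
Because $\matR$ carries the trivial $\calG$-module structure, the induced $\Gamma$-action on $\Linfw(X,\matR)=\Linf(X,\matR)$ given by Equation \eqref{equation_coefficient_G_module} reduces to the regular representation $g\cdot\lambda(x)=\lambda(g^{-1}x)$, the contragredient cocycle factor disappearing. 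Thus it suffices to prove the vanishing $\Hb^2(\Gamma,\Linf(X,\matR))\cong 0$.

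For the second step I would exploit the higher-rank hypothesis. By the work of Burger and Monod the restriction $\Hcb^2(G,\matR)\to\Hb^2(\Gamma,\matR)$ is an isomorphism for a lattice $\Gamma$ in a higher-rank group $G$, so the assumption $\Hb^2(\Gamma,\matR)\cong 0$ is equivalent to $\Hcb^2(G,\matR)\cong 0$; since $G$ is almost simple of rank at least two, this precisely says that $G$ is not of Hermitian type and therefore carries no bounded K\"ahler class. The strategy to compute $\Hb^2(\Gamma,\Linf(X,\matR))$ is then to use the Furstenberg--Poisson boundary $B=G/P$: the $\Gamma$-action on $B$ is amenable by Zimmer's theorem, so the complex $\Linfw(B^{\bullet+1},\Linf(X,\matR))^{\Gamma}$ computes $\Hb^\bullet(\Gamma,\Linf(X,\matR))$, and a degree-two class is represented by a $\Gamma$-invariant alternating cocycle $\omega\colon B^3\to\Linf(X,\matR)$.

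The crux is to show that every such parametrized cocycle is a coboundary. Here I would invoke double ergodicity of the $\Gamma$-action on $B\times B$ (a consequence of higher rank via the Mautner phenomenon), together with the ergodicity of $\Gamma\curvearrowright X$ (which forces $\Linf(X,\matR)^{\Gamma}\cong\matR$), in order to reduce the $\Linf(X,\matR)$-valued cocycle to a genuinely $\matR$-valued one — that is, to a class arising as a pullback from $\Hcb^2(G,\matR)$. This is exactly the vanishing-of-the-parametrized-class phenomenon established by the authors in \cite{sarti:savini:2} (see Remark \ref{remark_vanishing_parametrized}). As $\Hcb^2(G,\matR)\cong 0$ by the reformulation of the hypothesis, the reduced class is trivial, hence $\omega$ is a coboundary and $\Hb^2(\Gamma,\Linf(X,\matR))\cong 0$, which combined with the exponential law yields the conclusion.

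The main obstacle is precisely this last reduction: passing from the trivial-coefficient vanishing $\Hb^2(\Gamma,\matR)\cong 0$ to the twisted vanishing $\Hb^2(\Gamma,\Linf(X,\matR))\cong 0$. It is not formal, because the action on $(X,\mu)$ is only measure-class preserving, so one cannot simply split off the constants by integration; the argument must instead rely on the boundary resolution and on double ergodicity with coefficients to absorb the $\Linf(X)$-parameter, which is where the genuine higher-rank input — and the results of \cite{sarti:savini:2} — are indispensable.
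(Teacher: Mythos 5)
Your opening step coincides with the paper's: since $\Gamma$ is discrete, $X$ is automatically regular, Theorem \ref{theorem_expo} applies, and (the module structure being trivial) the twisted action on $\Linfw(X,\matR)=\Linf(X,\matR)$ is just the regular representation, so everything reduces to proving $\Hb^2(\Gamma,\Linf(X,\matR))\cong 0$. The gap is in how you propose to obtain this twisted vanishing from the hypothesis $\Hb^2(\Gamma,\matR)\cong 0$. Double ergodicity with coefficients of $\Gamma\curvearrowright B\times B$ only controls invariant functions on $B^2$: it tells you that $\Hb^2(\Gamma,E)$ is realized by the space of $\Gamma$-invariant alternating cocycles $B^3\rightarrow E$, with no coboundaries left to quotient by. It says nothing about such a cocycle taking values in the constants $\matR\subset \Linf(X,\matR)$; that reduction is exactly the hard point, and it is not a formal consequence of ergodicity of $\Gamma\curvearrowright X$ together with double ergodicity on the boundary. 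Moreover, your appeal to \cite{sarti:savini:2} points the wrong way: that paper shows the parametrized K\"{a}hler class is a complete invariant for Zariski dense cocycles; it does not prove that $\Linf(X)$-valued classes reduce to $\matR$-valued ones. Indeed, Remark \ref{remark_vanishing_parametrized} of the present paper states the implication in the opposite direction -- the vanishing of parametrized classes for higher-rank lattices is \emph{deduced from} Proposition \ref{proposition_irreducible_higher_rank} -- so invoking it here is circular.

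The ingredient that actually fills this gap, and which the paper's proof uses, is Monod's theorem on semi-separable coefficient modules \cite[Corollary 1.6]{Mon10}: for a lattice $\Gamma$ in a higher-rank almost simple algebraic group and any semi-separable coefficient $\Gamma$-module $E$, the inclusion $E^\Gamma\hookrightarrow E$ induces an isomorphism $\Hb^2(\Gamma,E^\Gamma)\cong\Hb^2(\Gamma,E)$. Applying this to $E=\Linf(X,\matR)$ (semi-separable, as the dual of the separable space $\Lone(X)$) and using ergodicity of $\Gamma\curvearrowright X$ to identify $\Linf(X,\matR)^\Gamma\cong\matR$ gives $\Hb^2(\Gamma,\Linf(X,\matR))\cong\Hb^2(\Gamma,\matR)\cong 0$ in two lines. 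Your instinct that the passage from trivial to $\Linf(X)$ coefficients is the crux, and that it needs genuine higher-rank input, is correct; but the tools you cite (double ergodicity and \cite{sarti:savini:2}) do not supply it, whereas Monod's result does. Your preliminary reformulation $\Hb^2(\Gamma,\matR)\cong\Hcb^2(G,\matR)$ via Burger--Monod is true but plays no role in either argument.
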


\begin{proof}
In virtue of Theorem \ref{theorem_expo} it is sufficient to show that 
$$
\Hb^2(\Gamma,\mathrm{L}^\infty(X,\mathbb{R})) \cong 0 \ .
$$

Since the Banach $\Gamma$-module $\mathrm{L}^\infty(X,\mathbb{R})$ is semi-separable, we can apply \cite[Corollary 1.6]{Mon10} to get the isomorphism
$$
\Hb^2(\Gamma,\mathrm{L}^\infty(X,\mathbb{R})) \cong \Hb^2(\Gamma,\mathrm{L}^\infty(X,\mathbb{R})^\Gamma) \ .
$$
By the ergodicity of the $\Gamma$-action on $(X,\mu)$ and the vanishing assumption on the degree two bounded cohomology of $\Gamma$ we get
$$
\Hb^2(\Gamma,\mathrm{L}^\infty(X,\mathbb{R})^\Gamma) \cong \Hb^2(\Gamma,\mathbb{R}) \cong 0, 
$$
and the statement follows. 
\end{proof}

\begin{es}\label{es_higher_rank}
Let $\Gamma <G=\mathbf{G}(\mathbb{R})^\circ$ be a lattice in the real points of a connected, simply connected, almost simple $\mathbb{R}$-algebraic group of rank at least two. Given an ergodic Lebesgue $\Gamma$-space $(X,\mu)$, Proposition \ref{proposition_irreducible_higher_rank} suggests that in order to understand the vanishing of $\Hmb^2(\Gamma \ltimes X,\mathbb{R})$ it is crucial to compute the ordinary bounded $2$-cohomology of $\Gamma$. A striking result in that direction was given by Burger and Monod \cite[Theorem 21]{burger2:articolo}: in this context, they proved that the comparison map $\Hb^2(\Gamma,\mathbb{R}) \rightarrow \mathrm{H}^2(\Gamma,\mathbb{R})$ is actually injective. As a consequence it is sufficient to require that the ordinary $2$-cohomology of $\Gamma$ vanishes. This is the case when $\Gamma < \mathrm{Isom}(\mathcal{Y})$ is a torsion-free cocompact lattice, where $\mathcal{Y}$ is a symmetric space not of Hermitian type and the rank of $\mathrm{Isom}(\mathcal{Y})$ is at least $3$ \cite[Corollary 1.6]{BM1}. 
\end{es}

We now move to the case of lattices in products. More precisely we will consider a lattice 
$$
\Gamma < G=\prod_{i=1}^k G_i \ ,
$$
where $k \geq 2$ and each $G_i$ is locally compact second countable group satisfying $\Hcb^2(G_i;\mathbb{R})\cong 0$. We set
$$
G'_i:=\prod_{j \neq i} G_j 
$$ 
for $i=1,\ldots,k$. We say that a standard Borel $G$-space $(X,\mu)$ is \emph{irreducible} if each $G'_i$ acts ergodically on $X$. Following Burger and Monod \cite{burger2:articolo} we say that $\Gamma$ is \emph{irreducible} if each projection of $\Gamma$ on $G_i$ is dense. 

\begin{prop}\label{proposition_lattice_products}
Let $k \geq 2$. Consider an irreducible lattice $\Gamma < \prod_{i=1}^k G_i$ in a product of locally compact second countable groups satisfying $\Hcb^2(G_i,\mathbb{R}) \cong 0$, for $i=1,\ldots,k$. Let $(X,\mu)$ be an irreducible standard Borel $G$-space. Then it holds that 
$$
\Hmb^2(\Gamma \ltimes X, \mathbb{R}) \cong 0 \ ,
$$
where $\mathbb{R}$ is endowed with the trivial module structure. 
\end{prop}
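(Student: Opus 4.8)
The plan is to reduce everything, via the exponential law, to a vanishing statement for the continuous bounded cohomology of $\Gamma$ with twisted coefficients, and then to exploit the product structure through the Burger--Monod theory of irreducible lattices. First I would apply Theorem \ref{theorem_expo} to the semidirect groupoid $\Gamma \ltimes X$, which gives the isometric isomorphism
\[
\Hmb^2(\Gamma \ltimes X, \mathbb{R}) \cong \Hb^2(\Gamma, \Linf(X,\mathbb{R})),
\]
where $\Gamma$ carries the discrete topology and acts on $\Linf(X,\mathbb{R})$ through the action $\Gamma \curvearrowright X$. Thus it suffices to show that the right-hand side vanishes. The crucial observation is that, since $X$ is a $G$-space, $\Linf(X,\mathbb{R})$ is not merely a $\Gamma$-module but a (semi-separable) coefficient $G$-module, and the $\Gamma$-action is simply the restriction of the $G$-action.

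Next I would invoke the Burger--Monod machinery for irreducible lattices in products \cite{burger2:articolo}. Since $\Gamma < G = \prod_{i=1}^k G_i$ is an irreducible lattice with $k \geq 2$ (each projection dense) and $\Linf(X,\mathbb{R})$ is a coefficient $G$-module, the degree-two bounded cohomology of $\Gamma$ splits along the factors, with each summand computed with coefficients in the invariants of the complementary factor $G'_i$:
\[
\Hb^2(\Gamma, \Linf(X,\mathbb{R})) \cong \bigoplus_{i=1}^k \Hcb^2\!\big(G_i, \Linf(X,\mathbb{R})^{G'_i}\big).
\]
Here the irreducibility of $\Gamma$ is essential, and one must also address the semi-separability of $\Linf(X,\mathbb{R})$: as in the proof of Corollary \ref{corollary_thompson}, the adjoint inclusion $\Linf(X,\mathbb{R}) \hookrightarrow \mathrm{L}^2(X,\mathbb{R})$ into a genuinely separable dual module induces an injection in degree two by \cite[Corollary 9]{burger2:articolo}, so that the analysis may be carried out with a separable coefficient module.

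Finally I would feed in the irreducibility of the $G$-space $X$: by definition each complementary factor $G'_i$ acts ergodically on $X$, hence $\Linf(X,\mathbb{R})^{G'_i} = \mathbb{R}$ consists only of the constants. Substituting into the decomposition yields
\[
\Hb^2(\Gamma, \Linf(X,\mathbb{R})) \cong \bigoplus_{i=1}^k \Hcb^2(G_i, \mathbb{R}) \cong 0,
\]
by the standing hypothesis $\Hcb^2(G_i,\mathbb{R}) \cong 0$. Combined with the first step, this proves $\Hmb^2(\Gamma \ltimes X, \mathbb{R}) \cong 0$.

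The main obstacle is the second step: establishing the factor-wise splitting of $\Hb^2(\Gamma, \Linf(X,\mathbb{R}))$. This is the technical heart of Burger--Monod's treatment of irreducible lattices in products; it rests on the induction isomorphism relating $\Gamma$ to $G$, on the low-degree Künneth-type decomposition of $\Hcb^2$ for the product $G$ (where the vanishing of the relevant first bounded cohomology kills the potential mixed term), and on the careful passage between the semi-separable module $\Linf(X,\mathbb{R})$ and a separable dual module. By contrast, the ergodicity input furnished by the irreducibility of $X$ plays only the elementary role of identifying the $G'_i$-invariants with the constants, thereby reducing the whole computation to the vanishing hypothesis on the individual factors.
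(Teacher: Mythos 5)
Your proposal is correct and follows essentially the same route as the paper's proof: the exponential law (Theorem \ref{theorem_expo}), the Burger--Monod theory for irreducible lattices in products, ergodicity of the $G_i'$-actions coming from irreducibility of $X$, and the vanishing hypothesis on each factor. The one point to tighten is the order of the two Burger--Monod inputs: the splitting theorem \cite[Theorem 14]{burger2:articolo} requires a \emph{separable} coefficient module, so the displayed isomorphism $\Hb^2(\Gamma,\Linf(X,\mathbb{R}))\cong\bigoplus_{i=1}^k \Hcb^2(G_i,\Linf(X,\mathbb{R})^{G_i'})$ is not directly available; as in the paper, one should first use the adjoint inclusion $\Linf(X,\mathbb{R})\hookrightarrow \mathrm{L}^2(X,\mathbb{R})$ and \cite[Corollary 9]{burger2:articolo} to inject $\Hb^2(\Gamma,\Linf(X,\mathbb{R}))$ into $\Hb^2(\Gamma,\mathrm{L}^2(X,\mathbb{R}))$, and only then split the latter as $\bigoplus_{i=1}^k \Hcb^2(G_i,\mathrm{L}^2(X,\mathbb{R})^{G_i'})\cong\bigoplus_{i=1}^k \Hcb^2(G_i,\mathbb{R})\cong 0$, where ergodicity identifies the $G_i'$-invariant $\mathrm{L}^2$-functions with the constants. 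You clearly had this fix in mind --- your remark about semi-separability is exactly the paper's maneuver --- but as written the splitting is asserted for a module to which the theorem does not apply, and what the corrected argument produces is an injection of $\Hb^2(\Gamma,\Linf(X,\mathbb{R}))$ into zero rather than the displayed decomposition.
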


\begin{proof}
Again, by Theorem \ref{theorem_expo}, it is sufficient to show that 
$$
\Hb^2(\Gamma,\mathrm{L}^\infty(X,\mathbb{R})) \cong 0 \ .
$$
Notice that the inclusion 
$$
\mathrm{L}^\infty(X,\mathbb{R}) \rightarrow \mathrm{L}^2(X,\mathbb{R}) \ 
$$
induces an injective map 
$$
\Hb^2(\Gamma,\mathrm{L}^\infty(X,\mathbb{R})) \rightarrow \Hb^2(\Gamma,\mathrm{L}^2(X,\mathbb{R})) \ ,
$$
by \cite[Corollary 9]{burger2:articolo}. By \cite[Theorem 14]{burger2:articolo}, the following decomposition holds
$$
\Hb^2(\Gamma,\mathrm{L}^2(X,\mathbb{R})) \cong \bigoplus_{i=1}^k \Hcb^2(G_i,\mathrm{L}^2(X,\mathbb{R})^{G_i'}) \cong \bigoplus_{i=1}^k \Hcb^2(G_i,\mathbb{R}) \ ,
$$
where the isomorphism on the right-hand side holds because of the irreducibility of the space $(X,\mu)$. By the vanishing assumption on each $\Hcb^2(G_i;\mathbb{R})$ the statement follows. 
\end{proof}

\begin{es}\label{es_lattice_products}
The vanishing condition given in Proposition \ref{proposition_lattice_products} can be easily verified when $\Gamma < \prod_{i=1}^k G_i$ is an irreducible lattice in a product of simple Lie groups. In fact, in that case, it is sufficient to require that none of the factors is of Hermitian type. 
\end{es}

\begin{es}\label{es_non_vanishing_cohom}
So far we have seen several cases of semidirect groupoids for which the measurable bounded $2$-cohomology vanishes. Here we want to point out that it is not complicated to build explicit examples of semidirect groupoids with non-trivial $2$-cohomology. In fact, let  $\Gamma \leq \mathrm{PU}(n,1)$ be a torsion-free complex hyperbolic lattice and let $(X,\mu)$ be a Lebesgue $\Gamma$-space. By Theorem \ref{theorem_expo} we know that 
$$
\Hmb^2(\Gamma \ltimes X,\mathbb{R}) \cong \Hb^2(\Gamma,\Linf(X,\mathbb{R})) \ .
$$
By a slight modification of \cite[Corollary 2.6]{MonShal0} we have the following isomorphism 
$$
\Hb^2(\Gamma,\Linf(X,\mathbb{R})) \cong \mathcal{Z}\mathrm{L}_{\mathrm{w}^\ast,\mathrm{alt}}((\partial_\infty \mathbb{H}^n_{\mathbb{C}})^3,\Linf(X,\mathbb{R}))^\Gamma \ ,
$$
where the right-hand side denotes the space of alternating cocycles. A proper subspace of the latter, corresponding to the one of $\mathrm{PU}(n,1)$-invariant alternating cocycles, is generated by the Cartan function (see for instance \cite{BICartan}). As a consequence the measurable bounded $2$-cohomology of $\Gamma \ltimes X$ is not trivial. 

Notice that Theorem \ref{theorem_expo_orbit_equivalence} ensures that the vanishing of the measurable bounded cohomology of an ergodic semidirect groupoid $\Gamma \ltimes X$ is a weakly orbit equivalence invariant. As a consequence, the ergodic action $\Gamma \curvearrowright X$ cannot be weakly orbit equivalent to any other ergodic action $\Lambda \curvearrowright Y$ with  $\Hmb^2(\Lambda \ltimes Y,\mathbb{R})\cong 0$ (for instance any lattice of Example \ref{es_higher_rank} and Example \ref{es_lattice_products}). 
\end{es}

\begin{oss}\label{remark_vanishing_parametrized}
The importance of both Proposition \ref{proposition_irreducible_higher_rank} and Proposition \ref{proposition_lattice_products} becomes more transparent when we relate them to the existence of some families of Borel 1-cocycles. More precisely, the authors \cite{sarti:savini:2} have recently introduced the notion of parametrized K\"{a}hler class for Borel $1$-cocycles with values into a Hermitian Lie group. They proved that such a class determines the cohomology class of a Zariski dense $1$-cocycle and the vanishing results contained in both Proposition \ref{proposition_irreducible_higher_rank} and Proposition \ref{proposition_lattice_products} imply that there are no Zariski dense cocycles for higher rank lattices. Something similar has been proved by the second author \cite{savini:euler} in the context of non-elementary cocycles with values in the orientation preserving homeomorphisms of the circle.   
\end{oss}

\section{Amenability}\label{section_Amenability}

This section is devoted to prove Theorem \ref{theorem_amenability}, that is the vanishing of the measurable bounded cohomology of amenable groupoids.
To this end, we need to introduce the notion of amenability, that is given in terms of the existence of an invariant system of means.
Other equivalent definitions have been introduced in the book by Anantharaman-Delaroche and Renault \cite{delaroche:renault}, and our choice is simply motivated by the arguments used to prove Theorem \ref{theorem_amenability}. In fact, the proof follows the same line adopted for instance in Frigerio's book \cite{miolibro}, and the idea is to construct a contracting homotopy for the cochain complex that computes the bounded cohomology of the groupoid. In our framework, thanks to the disintegration isomorphism of Equation \eqref{equation_disintegration_isomorphism}, this can be done fiberwisely, namely on each $\Linfw((\calG^{(\bullet)},\nu_x^{(\bullet)}),E)$ for every $x\in \calG^{(0)}$.

We start with the following
\begin{defn}\label{definition_invariant_system_of_means}
Let $\calG$ be a measured groupoid and let $\mu$ be the probability measure on the unit space $\calG^{(0)}$. Denote by $\nu$ a symmetric and quasi-invariant probability measure on $\calG$ and by $\{\nu^x\}_{x\in \calG^{(0)}}$ its disintegration along the target $t:\calG\rightarrow \calG^{(0)}$. An \emph{invariant measurable system of means} is a family $\{\frakm^x\,|\, x\in \calG^{(0)}\}$ of linear functions $\frakm^x: \Linf(\calG,\nu^x)\rightarrow \matR$ such that 
the map $x\mapsto \frakm^x(\lambda)$ is Borel measurable for every $\lambda\in \Linf(\calG)$
and for $\nu$-almost every $g\in \calG$ one has
\begin{equation}\label{equation_invariance_mean}
\frakm^{t(g)} (\lambda^{t(g)})=\frakm^{s(g)}(g^{-1}\lambda^{t(g)})\,.
\end{equation}
where $\lambda^x$ is the element in $\Linf(\calG,\nu^x)$ defined by the isomorphism of Equation \eqref{equation_disintegration_isomorphism} and $g^{-1}\lambda^{t(g)}$ is defined analogously as in Equation \eqref{equation_action_section}.
\end{defn}

\begin{defn}\label{definition_amenable_groupoid}
A measured groupoid $\calG$ is \emph{amenable} if it admits an invariant measurable system of means.
\end{defn}

\begin{es}
When $\calG=G \ltimes X$ is the semidirect groupoid of a measure preserving action of a locally compact group $G$, the amenability of $\calG$ is equivalent to the amenability of the action in the sense of Zimmer \cite{zimmer:libro}, as observed also by Anantharaman-Delaroche and Renault \cite[Examples 3.2.2]{delaroche:renault}.
\end{es}

In \cite[Chapter 3.1.c]{delaroche:renault} the authors introduce a more general notion of system of means for a Borel surjection between $\calG$-spaces, where the latter are measure spaces endowed with a measurable $\calG$-action such that the semidirect groupoids are measured, as recalled in Example \ref{example_groupoid_action_measurable}. More precisely, let $(Y,\nu)$ and $(X,\mu)$ be two standard Borel probability spaces with a measure class preserving $\calG$-action, for a measured groupoid $\calG$. Suppose we have a $\calG$-equivariant Borel surjection $\pi:Y \rightarrow X$ such that $\pi_\ast\nu = \mu$ and let $\{ \nu^x\}_{x \in X}$ be the disintegration of $\nu$ along $\pi$. Notice that the disintegration is \emph{quasi} $\calG$-\emph{invariant}, that means $g\nu^{x} \sim \nu^{gx}$ \cite[pg. 53-54]{delaroche:renault}. 

An \emph{invariant system of means} for the surjection $\pi:Y \rightarrow X$ \cite[Definition 3.1.26]{delaroche:renault} is a family $\{ \frakm^x \}_{x \in X}$ of linear operators $\frakm^x:\Linf(Y,\nu^x) \rightarrow \mathbb{R}$ such that, for every $\lambda \in \Linf(Y,\nu)$, we have that $x \mapsto \frakm^x(\lambda)$ is Borel measurable and 
\begin{equation}\label{eq ism projection}
\frakm^{g^{-1}x}(g^{-1}\lambda)=\frakm^x(\lambda) \ ,
\end{equation}
for almost every $(g,x) \in  \calG \ltimes X$. Here we are tacitly considering the structure of measured groupoid on $\calG \ltimes X$ that we mentioned in Example \ref{example_groupoid_action_measurable}. We want to stress that, in this paper, all these definitions will be applied only in the proof of Theorem \ref{theorem_amenability}, when $\pi$ is the target map $t:\calG\rightarrow \calG^{(0)}$ and
the measure structure will be the one of $\calG$.
In this case, Equation \eqref{eq ism projection} boils down to Equation \eqref{equation_invariance_mean} and the sentence ``almost every $(g,x)$" refers, up to renormalization, to the convolution of the Borel Haar system $(\rho^x)_{x\in \calG^{(0)}}$ with $\mu$, which coincides with a symmetric invariant measure on $\calG$.
This means that Definition
 \ref{definition_invariant_system_of_means} is a particular case of
 \cite[Definition 3.1.26]{delaroche:renault}.

Given an invariant system of means for a Borel surjection $\pi:Y \rightarrow X$, one can build a bounded linear operator $$\frakm \in B_{\Linf(\calG^{(0)})}(\Linf(Y),\Linf(X))\,,$$ called \emph{mean}, as follows 
\begin{equation}\label{equation_mean_system}
 \frakm(\lambda) (x) \coloneqq \frakm^x(\lambda)\,.
 \end{equation} 
Moreover, the invariance of the system $\{\frakm^x \}_{x \in X}$ implies for $\frakm$ the following stability property
$$
\frakm(\Linf(Y)^\calG) \subset \Linf(X)^\calG \,.
$$
Here $\Linf(Y)^\calG$ and $\Linf(X)^\calG$ denote the spaces of $\calG$-invariant functions on $Y$ and $X$, respectively, and they are defined as in Section \ref{section_Measurable bounded cohomology}. Such property, called \textbf{(INV0)} by Anantharaman-Delaroche and Renault \cite[Definition 3.1.13]{delaroche:renault}, can be actually strengthened to any fiber product involving $Y$ and $X$. Precisely, given another $\calG$-equivariant Borel surjection $q:S \rightarrow X$ of a standard Borel probability $\calG$-space $(S,\tau)$ such that $q_*\tau = \mu$, we can define the fiber product $Y \ast S$ with respect to $\pi$ and the fiber product $X \ast S$ with respect to the identity on $X$. The $\calG$-action can be extended to the fiber products and we say that the mean $\frakm$ satisfies \textbf{(INV1)} if, for every such $(S,\tau)$, one has
$$
(\frakm \otimes \mathbbm{1}_S) (\Linf(Y \ast S)^\calG) \subset \Linf(X \ast S)^\calG ,
$$
where $\frakm \otimes \mathbbm{1}_S$ is the \emph{extension} of the mean $\frakm$ to the fiber product $Y \ast S$ \cite[Chapter 1.3.b]{delaroche:renault}. The latter can be constructed by observing that the projection on the second factor $p_2:Y \ast S \rightarrow S$ is a $\calG$-equivariant Borel surjection and we can disintegrate $\nu \ast \tau$ along $p_2$ obtaining the family of measures $\{ \nu^{q(s)} \otimes \delta^s \}_{s \in S}$ \cite[Section 2]{ramsay}, where $\delta^s$ denotes the Dirac measure centred in $s \in S$. This fact allows to identify $\Linf(Y \ast T, \nu^{q(s)} \otimes \delta^s)$ with $\Linf(Y,\nu^{q(s)})$, and to consider the invariant system of means $\{\frakm^{(x,s)}\}_{(x,s) \in X \ast S}$ which acts on $\Linf(Y \ast S, \nu^{q(s)} \otimes \delta^s)$ as $\frakm^{q(s)}$. Finally for any $\lambda \in \Linf(Y \ast S, \nu \ast \tau)$ it is sufficient to define
\begin{equation}\label{eq extension mean}
(\frakm \otimes \mathbbm{1}_S)(\lambda)(x,s):=\frakm^{(x,s)}(\lambda) \ .
\end{equation}

With this definition, Anantharaman-Delaroche and Renault proved the equivalence between the existence of an invariant system of means and the existence of a mean with property \textbf{(INV1)}.
This different point of view will be used together with the disintegration isomorphism of Equation \eqref{equation_disintegration_isomorphism} in the proof of Theorem \ref{theorem_amenability}, in order to pass from a \emph{global} approach to a \emph{fiberwise} computation.



\begin{rec_thm}[\ref{theorem_amenability}]
Let $\calG$ be an amenable measured groupoid and $E$ a measurable coefficient $\calG$-module. Then 
$$\Hmb^{\bullet}(\calG,E)\cong 0$$
whenever the degree is greater or equal than one. 
\end{rec_thm}

 \begin{proof}

The strategy of the proof is to construct a contracting homotopy $k$ for the complex defining the bounded cohomology of $\calG$. We will exploit the amenability of $\calG$  to define the homotopy. Let $\{\frakm^x\}_{x \in \calG^{(0)}}$ be an invariant system of means on $\calG$. Recall that $E$ is a measurable coefficient $\calG$-module, hence it coincides with the contragradient representation of $\calG$ on some separable Banach space $(E^\flat,L^\flat)$. 

Thanks to Equation \eqref{equation_mean_system}, we can define a global mean $\frakm:\Linf(\calG) \rightarrow \Linf(\calG^{(0)})$. By choosing as $(S,\tau)=(\calG^{(\bullet-1)},\nu^{\bullet-1})$, for $\bullet > 1$, we can define the extended mean 
$$
\frakm \otimes \mathbbm{1}_{\calG^{(\bullet-1)}}: \Linf(\calG^{(\bullet)}) \rightarrow \Linf(\calG^{(\bullet-1)}) 
$$
with the help of Equation \eqref{eq extension mean}. Recall that the extensions will preserve $\calG$-invariance. 

Let $\lambda \in \Linfw(\calG^{(\bullet)},E)$ be a weak$^*$ Borel function. We consider $v \in E^\flat$ and the function
$$\lambda_v(\ \cdot \ )\coloneqq   \langle \lambda(\ \cdot \ ),v\rangle   \in \Linf(\calG^{(\bullet)})\,,$$
where the pairing is the one on $E\otimes E^\flat $.
Then, for any $\sigma \in \mathrm{L}^1(\calG^{(\bullet-1)})$, we define the bilinear form
$$
k^{\bullet-1}\lambda(\sigma,v):=\langle (\frakm \otimes \mathbbm{1}_{\calG^{(\bullet-1)}})( \lambda_v ) | \sigma \rangle \ ,
$$
where the pairing is now
 the one on $\Linf(\calG^{(\bullet-1)}) \otimes \mathrm{L}^1(\calG^{(\bullet-1)})$. Notice that the definition of $k^{\bullet-1}\lambda$ is well-posed because we can find a countable dense subset of vectors $v\in E^{\flat}$, whose existence is ensured by the separability of $E^{\flat}$. The bilinear form $k^{\bullet-1}\lambda$ is bounded, linear with respect to $\lambda$ and it induces a linear functional on $\mathrm{L}^1(\calG^{(\bullet-1)},E^\flat)$. In other words, with a slight abuse of notation and exploiting the duality isomorphism of Equation \eqref{equation_duality}, we can write
$$
k^{\bullet-1} \lambda \in \Linfw(\calG^{(\bullet-1)},E) \ .
$$
Using the disintegration isomorphism of Equation \eqref{equation_disintegration_isomorphism} we can reformulate our definition in terms of the invariant system of means $\{\frakm^x \}_{x \in \calG^{(0)}}$ and of the section associated to $\lambda$. 
More precisely, 
for $v \in E^\flat$, 
we can disintegrate 
$\lambda_v   \in \Linf(\calG^{(\bullet)})$ along the projection $\calG^{(\bullet)} \rightarrow \calG^{(\bullet-1)}$, getting a family 
of functions $$\lambda_v^{g_1,\ldots,g_{\bullet-1}}\in \Linf(\calG^{(\bullet)},\nu^{t^{\bullet-1}(g_1,\ldots,g_{\bullet-1})} \otimes \delta^{(g_1,\ldots,g_{\bullet-1})}) \cong \Linf(\calG,\nu^{t^{\bullet-1}(g_1,\ldots,g_{\bullet-1})})$$ defined by
$$
\lambda_v^{g_1,\ldots,g_{\bullet-1}}(g)\coloneqq\begin{cases}
\lambda_v(g,g_1,\ldots,g_{\bullet-1}) & \text{if } t(g)=t^{\bullet-1}(g_1,\ldots,g_{\bullet-1})\\
0  & \text{otherwise}\,.
\end{cases}
$$
Hence, by Equation \eqref{eq extension mean}, we obtain
\begin{equation*}
(\frakm \otimes \mathbbm{1}_{\calG^{(\bullet-1)}})(\lambda_v )(g_1,\ldots,g_{\bullet-1}) = \frakm^{t^{\bullet-1}(g_1,\ldots,g_{\bullet-1})}(\lambda_v^{g_1,\ldots,g_{\bullet-1}})
\end{equation*}
for almost every $(g_1,\ldots,g_{\bullet-1})\in \calG^{(\bullet-1)}$. Therefore, we can rewrite $k^{\bullet-1}\lambda$ fiberwisely as
\begin{equation}\label{eq kappa section}
\langle (k^{\bullet-1} \lambda)^x(g_1,\ldots,g_{\bullet-1}) , v \rangle:=\frakm^x(g \mapsto \langle \lambda^x(g,g_1,\ldots,g_{\bullet-1}), v \rangle) \ ,
\end{equation}
where $x=t^{\bullet-1}(g_1,\ldots,g_{\bullet-1})$ and Equation \eqref{eq kappa section} holds for every $v \in E^\flat$ and for almost every $(g_1,\ldots,g_{\bullet-1}) \in \calG^{ (\bullet-1)} $.  Thanks to the above considerations, from now on we will work only in terms of sections obtained via the disintegration isomorphism, that is using Equation  \eqref{eq kappa section}.

We need to show that $k^{\bullet-1}$ is a contracting homotopy and that it preserves $\calG$-invariance. 
We start by showing that it is a chain contraction. For every $v \in E^\flat$, on one hand we have that 

\begin{align*}
\langle (\delta^{\bullet-2}(k^{\bullet-1}\lambda))^x(g_1,\ldots,g_{\bullet}) , v \rangle&=
\sum\limits_{j=1}^{\bullet} (-1)^{j-1} \langle (k^{\bullet-1} \lambda)^x(g_1,\ldots,\widehat{g_j},\ldots,g_{\bullet}),v \rangle \\
&= \sum\limits_{j=1}^{\bullet} (-1)^{j-1} \frakm^x(g\mapsto \langle \lambda^x(g,g_1,\ldots,\widehat{g_j},\ldots,g_{\bullet}),v \rangle )
\end{align*}
and the other hand we have
\begin{align*}
\langle (k^{\bullet}(\delta^{\bullet-1}\lambda))^x(g_1,\ldots,g_{\bullet}), v \rangle&=
\frakm^x(g\mapsto \langle (\delta^{\bullet-1}\lambda)^x(g,g_1,\ldots,g_{\bullet}),v \rangle )\\
&=\frakm^x(g\mapsto \langle \lambda^x(g_1,\ldots,g_{\bullet}),v \rangle)\\
&+ \sum\limits_{j=1}^{\bullet}(-1)^{j} \frakm^x(g\mapsto \langle \lambda^x(g,g_1,\ldots,\widehat{g_j},\ldots,g_{\bullet}),v \rangle)\\
&=\langle \lambda^x(g_1,\ldots,g_{\bullet}),v \rangle\\
&+ \sum\limits_{j=1}^{\bullet}(-1)^{j} \frakm^x(g\mapsto \langle \lambda^x(g,g_1,\ldots,\widehat{g_j},\ldots,g_{\bullet}),v \rangle)
\end{align*}
where we exploited both the linearity of $\frakm^x$ and the fact that $\frakm^x$ preserves the constant functions.
As a consequence of the previous computation we obtain that 
$$\delta^{\bullet-2}\circ k^{\bullet-1} + k^{\bullet}\circ \delta^{\bullet-1} = \id_{\Linfw(\calG^{(\bullet)},E)}\, $$
for every $\bullet\geq 1$, which means that $k^{\bullet-1}$ is a contracting homotopy. 

We need to show that $k^{\bullet-1}$ preserve the $\calG$-invariance. Take a Borel function $\lambda$ whose class in $\Linfw(\calG^{(\bullet)},E)$ is $\calG$-invariant. Taking $g_1,\ldots, g_{\bullet}\in \calG$ and $v\in E^{\flat}$
we have
\begin{align*}
 & \langle g \cdot (k^{\bullet-1}(\lambda))^{s(g)}(g_1,\ldots,g_{\bullet-1}) , v\rangle \\
 =& \; \langle L(g) (k^{\bullet-1}(\lambda))^{s(g)}(g^{-1}g_1,\ldots,g^{-1}g_{\bullet-1}), v\rangle\\
 =&\; \langle (k^{\bullet-1}(\lambda))^{s(g)}(g^{-1}g_1,\ldots,g^{-1}g_{\bullet-1}) ,L^{\flat}(g^{-1})v\rangle)\\
=& \;\frakm^{s(g)}(h\mapsto \langle\lambda^{s(g)}(h,g^{-1}g_1,\ldots,g^{-1}g_{\bullet-1})  , L^{\flat}(g^{-1})v\rangle ) \, ,
\end{align*}
where we moved from the first line to the second one using the action of $g$ on the $s(g)$-fiber and we concluded using the duality between $E$ and $E^\flat$. 
Notice also that 
\begin{align*}
\langle \lambda^{s(g)}(h,g^{-1}g_1,\ldots,g^{-1}g_{\bullet-1}),L^\flat(g^{-1})v \rangle&=\langle L(g)\lambda^{s(g)}(h,g^{-1}g_1,\ldots,g^{-1}g_{\bullet-1}),v \rangle \\
&=\langle (g\lambda^{s(g)}(gh,g_1,\ldots,g_{\bullet-1}),v \rangle \\
&=\langle \lambda^{t(g)}(gh,g_1,\ldots,g_{\bullet-1}),v \rangle \\
&=g^{-1} \cdot \left( \langle \lambda^{t(g)}(h,g_1,\ldots,g_{\bullet-1}),v \rangle \right)\ ,
\end{align*}
where we moved from the second line to the third one using the $\calG$-invariance of $\lambda$ and the equality $g^{-1}gh=h$
(condition (iii) of Definition \ref{def_groupoid}).
We finally concluded using backward the definition of $g^{-1}\langle \lambda^{t(g)}( \ \cdot \ ), v \rangle$. 
We can move on in our computation as follows
\begin{align*}
&\; \frakm^{s(g)}(h\mapsto \langle\lambda^{s(g)}(h,g^{-1}g_1,\ldots,g^{-1}g_{\bullet-1})  , L^{\flat}(g^{-1})v \rangle) \\
=&\; \frakm^{s(g)}(h\mapsto  g^{-1} \cdot ( \langle \lambda^{t(g)}(h,g_1,\ldots,g_{\bullet-1},v \rangle)))\\
= &\; \frakm^{t(g)}(h\mapsto \langle  \lambda^{t(g)}(h,g_1,\ldots,g_{\bullet-1}),v\rangle)\\
=&\; \langle (k^{\bullet-1}(\lambda))^{t(g)}(g_1,\ldots,g_{\bullet-1}),v\rangle
\end{align*}
where the first equality follows by Equation \eqref{equation_invariance_mean}.
Hence $k^{\bullet-1}$ restricts to $\calG$-invariants and it defines a contracting homotopy for the subcomplex 
$(\Linfw(\calG^{(\bullet)},E)^{\calG},\delta^{\bullet})$. The statement follows.
 \end{proof}


\begin{oss}\label{remark_amenability}
The connection between bounded cohomology and amenability has been studied in different contexts, for instance by Monod \cite{monod:libro} in case of groups, and by Blank \cite{blank} and Anantharaman-Delaroche \cite{delaroche:91} for groupoids. We list below some results connected with our Theorem \ref{theorem_amenability}.

For group actions, a characterization of amenability in terms relative injectivity has been proved by Burger and Monod  \cite[Theorem 1]{burger2:articolo}. Precisely, an action of a locally compact group $G$ on a Lebesgue space $S$ (or equivalently the semidirect groupoid $G\ltimes S$) is amenable if and only if the module $\Linfw(S,E)$ is \emph{relatively injective} for every coefficient $G$-module $E$. The previous result, together with Theorem \ref{theorem_expo} and \cite[Proposition 7.4.1]{monod:libro}, leads us to the bounded acyclicity statement given in Proposition \ref{proposition_amenable_space}. 


Coming back to the measurable setting, the first bounded cohomology group of a measured groupoid has already been introduced by Anantharaman-Delaroche \cite{delaroche:91}. Although the author uses the inhomogeneous coboundary operator instead of the homogeneous one, it is immediate to check the equivalence with our definition. Moreover, it is proved that, for an amenable groupoid, each 1-cocycle is in fact a coboundary. The converse is also true for amenable actions under an additional technical hypothesis. However, all these results are stated in the more general setting of coefficient bundles of Banach spaces. For this reason, we do not go into details and we refer to \cite{delaroche:renault}.

\end{oss}

\begin{oss}\label{remark_amenable_actions}
In Section \ref{section_Main consequences of the exponential law} we prove two vanishing results in case of semidirect groupoids coming from amenable actions. In particular Proposition \ref{proposition_amenable_space} and Proposition \ref{proposition_ergodic_amenable_relation} state respectively that amenable actions of a locally compact group on a Lebesgue space and amenable $\mathrm{II}_1$-relations have trivial bounded cohomology. Both cases are particular instances of Theorem \ref{theorem_amenability}, since amenable actions produces amenable groupoids. We also point out that Proposition \ref{proposition_amenable_space} considers only coefficient $G$-modules in the sense of Monod, whereas Theorem \ref{theorem_amenability} holds for any measurable coefficient $(G\ltimes S)$-module (see also Remark \ref{remark_monod_shalom} for a similar discussion in the case of orbit equivalences).
 
\end{oss}

\section*{Declarations}
\subsection*{Data Availability Statement}
No new data were created or analysed in this study. Data sharing is not applicable to this article.

\subsection*{Competing interests} 
On behalf of all authors, the corresponding author states that there is no conflict of
interest.

\subsection*{Fundings}
 The first author's research is funded by MUR through the PRIN project ``Geometry and topology of manifolds" and
 supported by the GNSAGA--INdAM.
\bibliographystyle{alpha}

\bibliography{biblionote}

\end{document}